\newtheorem{newthm}{Theorem}
\newtheorem{theorem}{Theorem}[section]
\newtheorem{lemma}[theorem]{Lemma}
\newtheorem{proposition}[theorem]{Proposition}
\newtheorem{corollary}[theorem]{Corollary}
\newtheorem{definition}[theorem]{Definition}
\theoremstyle{remark}
\newtheorem{example}{\bf Example}[section]
\newtheorem{rema}[example]{\bf Remark}
\theoremstyle{plain}
\newtheorem{coro}[theorem]{\bf Corollary}
\numberwithin{equation}{section}
\def\QQQ{{\cal Q}}
\def\ep{\varepsilon}
\def\smm{\smallsetminus}
\def\R{\mbox{$\mathbb R$}}
\def\C{\mbox{$\mathbb C$}}
\def\T{\mbox{$\mathbb T$}}
\def\D{\mathbb D}
\def\Z{\mbox{$\mathbb Z$}}
\def\Q{\QQQ}
\def\N{\mbox{$\mathbb N$}}
\def\lv{ \left(\begin{matrix} }
 \def\rv{\end{matrix}\right)}
\def\P{{\mathbb P}^1}
\def\eqdef{:=}
\def\cal{\mathcal}
\def \eps{\varepsilon}
\def\dz{{\ \rm{d}z}}
\def\dw{{\dw}}
\newcommand{\mylabel}[1]{\label{#1}}
\newcommand{\REFEQN}[1] { \begin{equation}\mylabel{#1} }
\newcommand{\ENDEQN}{\end{equation}}
\newcommand{\REFTHM}[1] { \begin{theorem}\mylabel{#1} }
\newcommand{\ENDTHM}{\end{theorem}}
\newcommand{\REFNTH}[1] { \begin{newthm}\mylabel{#1} }
\newcommand{\ENDNTH}{\end{newthm}}
\newcommand{\REFPROP}[1]{\begin{proposition}\mylabel{#1} }
\newcommand{\ENDPROP}{\end{proposition} }
\newcommand{\REFLEM}[1]{\begin{lemma}\mylabel{#1} }
\newcommand{\ENDLEM}{\end{lemma} }
\newcommand{\REFCOR}[1]{\begin{corollary}\mylabel{#1} }
\newcommand{\ENDCOR}{\end{corollary} }
\newcommand{\Ref}[1]{ (\ref{#1}) }
\def\smm{ {\smallsetminus }}
\def\mystrut{{\rule[-2ex]{0ex}{4.5ex}{}}}
\def\T{{\mathbb T}}
\begin{document}

\title{The dynatomic periodic curves for polynomial $\textbf{z}\mapsto \textbf{z}^\textbf{d}+\textbf{c}$ are smooth and irreducible}

\author{
Yan Gao
\thanks{
Academy of mathematics and systems science, Chinese academy of sciences. (\tt email: gyan@mail.ustc.edu.cn)}
\and Yafei Ou
\thanks{
Universit\'e d'Angers. ( \tt email:  yafei.ou@univ-angers.fr) }
}

\maketitle

\begin{abstract}

 We prove here the smoothness and the irreducibility of the
 periodic dynatomic curves $ (c,z)\in \C^2$ such that $z$ is $n$-periodic for $z^d+c$, where $d\geq2$.

 We use the method provided by Xavier Buff and Tan Lei in \cite{BT} where they prove the conclusion for $d=2$.
 The proof for smoothness is based on elementary calculations on the pushforwards of specific quadratic differentials,
 following Thurston and Epstein, while the proof for irreducibility is a simplified version of Lau-Schleicher's proof
 by using elementary arithmetic properties of kneading sequence instead of internal addresses.

\end{abstract}

\section{Introduction}

For $c\in \C$, set $f_c(z)=z^d+c$, where $d\geq2$. For $n\geq 1$,
define
 $$X_{n}:=\bigl\{(c,z)\in \C^{2}\mid f^{n}_{c}(z)=z,\ (f^{n}_{c})'(z)\ne 1 \text{ and for all } 0<m< n,\ \ f^{m}_c(z)\ne z\bigr\}.$$

The objective of this note is to give an elementary proof of the following results:

\REFTHM{smooth} For every $n\geq 1$, the closure of $X_n$  in $\C^2$ is smooth.\ENDTHM

\REFTHM{irr} For every $n\geq 1$ the closure of $X_n$  in $\C^2$ is irreducible.\ENDTHM

The first example is, as $d=2$
$$X_1=\bigl\{(c,z)\in \C^2\mid z^2+c=z\bigr\}\smm \bigl\{(\dfrac14,\dfrac12)\bigr\}=\{(c,z)\in \C^2\mid c = z-z^2\}\smm \bigl\{(\dfrac14,\dfrac12)\bigr\}$$
and $$\overline{X_1}=\bigl\{(c,z)\in \C^2\mid c=z-z^2\bigr\}\\ .$$

In the case $d=2$, Theorem \ref{smooth} was proved by Douady-Hubbard and Buff-Tan in different methods;
Theorem \ref{irr} was proved by Bousch, Morton, Lau-Schleicher and Buff-Tan with different approaches.

Our approach here to the two Theorems is a generalisation to that used by Xavier Buff and Tan Lei in \cite{BT}, where they prove
the conclusion for $d=2$. To prove Theorem \ref{smooth}, we use elementary calculations on quadratic differentials and
Thurston's contraction principle.  To prove Theorem \ref{irr}, we use a dynamical method by a purely arithmetic argument on kneading sequences(Lemma \ref{key} below).

Section 2 proves the smoothness and Section 3 proves the irreducibility. The two sections can be read
independently.

Acknowlegement. We thank Tan Lei for helpful discussions, and Yang Fei for providing some good pictures

\section{Smoothness of the periodic curves}

For $n\ge 1$, and  $(c,z)\in \C^2$, we say that $z$ is periodic of
{\em period} $n$ for $f_c: z\mapsto z^d+c$, where $d\geq2$, if
 $f_c^{\circ n}(z)=z$ and
for all $0<m<n$,  $f^m_c(z)\ne z$. In this case the {\em multiplier} of $z$ for $f_c$ is defined to be
$[f^n_{c}]'(z)$.

We define
 $$X_{n}:=\bigl\{(c,z)\in \C^{2}\mid  z\text{ is of period $n$ for $f_c$ and of multiplier distinct from } 1\bigr\}.$$
The objective of here  is to give an elementary proof of the following result:

\REFTHM{Smooth} For every $n\geq 1$, the closure $\overline{X_n}$ of $X_n$  in $\C^2$ is smooth.
More precisely, the boundary $\partial {X_n}$ is the finite set of $(c,z)\in \C^2$ such that $z$ is of period $m\le n$ dividing $n$ for $f_c$
 whose multiplier is  of the form $e^{2\pi i u/v}$ with $u,v\ge 1$ co-prime and $v=n/m$. In a neighborhood of a point $(c_0,z_0)\in \overline{X_n}$,
 the
 set $\overline{X_n}$ is locally the graph of a holomorphic map $\left\{\begin{array}{ll} \mystrut c\mapsto z(c) \text{ with $z(c_0)=z_0$} & \text{if } (c_0,z_0)\in X_n\\
 z\mapsto  c(z) \text{ with $c(z_0)=c_0$ and $c'(z_0)=0$} & \text{if } (c_0,z_0)\in \partial X_n\end{array}\right. $.
 \ENDTHM

The idea is to prove that some partial derivative of some defining function of $\overline{X_n}$ is non vanishing.
Following A. Epstein, we will express this derivative as the coefficient of a quadratic differential of the form
$(f_c)_*\Q-\Q$. Thurston's contraction principle gives $(f_c)_*\Q- \Q\ne 0$, therefore the non-nullness of our partial derivative.

\subsection{Quadratic differentials and contraction principle}

A meromorphic quadratic differential (or in short, a quadratic differential)  $ \Q $ on $\C$  takes the form $ \Q = q \dz ^ 2 $ with $ q $ a meromorphic function on $ \C $.

We use $ \Q (\C) $ to denote the set of
meromorphic quadratic differentials on $ \C $ whose poles (if any) are all simple. If $\Q\in \Q(\C)$ and $ U $ is a bounded open subset of $ \C $, the norm
\[\| \Q \| _U \eqdef \iint_U| q| \]
is well defined and finite.

For example $$\|\dfrac{\dz^2}{z}\|_{\{|z|<R\}}=\int_{0}^{2\pi}\!\!\!\!\int_{0}^R \dfrac1r r\,drd\theta =2\pi R\ .$$

For $ f: \C \to \C $  a non-constant polynomial and $\Q=q\dz^2$ a meromorphic quadratic differential on $\C$,  the pushforward $ f_* \Q$ is defined by the quadratic differential
\[ f_* \Q: = Tq \dz ^ 2\quad\text{with}\quad  Tq (z) \eqdef \sum_{f (w) = z} \frac{q (w)}{f '(w) ^ 2}. \]
If $ Q \in \Q (\C) $, then $ f_* \Q \in \Q (\C) $ also.

The following lemma is a weak version of Thurston's contraction principle.

\begin{lemma} [contraction principle] \label{lemme_thurstonpoly} For a non-constant polynomial $f$ and
a round disk $V$ of radius large enough so that
$ U \eqdef f ^{-1} (V) $ is relatively compact in $ V $, we have
\[\| f_* \Q \| _V \leq \| \Q \| _U <\| \Q \| _V,\quad \forall\, \Q\in \Q(\C). \]
\end{lemma}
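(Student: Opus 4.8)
The plan is to establish the two inequalities separately. The first inequality $\|f_*\Q\|_V \le \|\Q\|_U$ is a general contraction-under-pushforward fact: for $\Q = q\,\dz^2$ and any measurable set $W$, the quantity $\iint_W |Tq|$ is bounded by $\iint_{f^{-1}(W)} |q|$. To see this, I would fix a point $z\notin$ (the critical values of $f$) and estimate $|Tq(z)| = \bigl|\sum_{f(w)=z} q(w)/f'(w)^2\bigr| \le \sum_{f(w)=z} |q(w)|/|f'(w)|^2$; then integrate over $V$, change variables $w = $ preimage branch (each local inverse branch $g$ of $f$ contributes $\iint_{g(V)} |q(w)|\,|g'|^{-2}\cdot|g'|^2 = \iint_{g(V)}|q(w)|$ after accounting for the Jacobian $|g'|^2$ of the holomorphic change of variables), and sum over the $d$ branches to get exactly $\iint_{f^{-1}(V)}|q| = \|\Q\|_U$. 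The set of critical values is finite, hence negligible for the area integral, so this causes no trouble; and the simple poles of $\Q$ (and hence of $f_*\Q$) are likewise a negligible set and locally integrable, so the norms are finite as asserted in the text.

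For the second, strict inequality $\|\Q\|_U < \|\Q\|_V$, the key observation is that $U = f^{-1}(V)$ is relatively compact in $V$, i.e. $\overline{U} \subset V$, so $V \setminus \overline{U}$ is a nonempty open set. I would argue that $\|\Q\|_V = \|\Q\|_U + \|\Q\|_{V\setminus \overline U}$ (the boundary $\partial U$ has measure zero), and that $\|\Q\|_{V\setminus\overline U} = \iint_{V\setminus \overline U}|q| > 0$ because $q$ is a nonzero meromorphic function, hence its zero set is discrete and in particular has zero area on the nonempty open set $V\setminus\overline U$. This forces $\|\Q\|_U < \|\Q\|_V$. One should note that this requires $\Q \ne 0$; if $\Q = 0$ the claimed strict inequality fails, but then $\Q \notin \Q(\C)$ is excluded only if we read $\Q(\C)$ as nonzero differentials — more safely, the strict inequality in the lemma is used only for nonzero $\Q$, and for $\Q=0$ the chain of inequalities holds with equalities throughout, which is harmless downstream.

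It remains to justify why such a disk $V$ exists, i.e. that for $R$ large enough the round disk $V = \{|z| < R\}$ satisfies $f^{-1}(V) \Subset V$. Writing $f(z) = a_d z^d + \cdots$ with $d \ge 2$ and $a_d \ne 0$, for $|z|$ large one has $|f(z)| \ge \tfrac12|a_d|\,|z|^d \ge |z|$ once $|z| \ge R_0$ for a suitable $R_0$; hence for any $R \ge R_0$, $|z| = R$ implies $|f(z)| \ge R$, so $f^{-1}(\{|w|<R\}) \subset \{|z| < R\}$, and in fact a compactness/continuity argument upgrades this to relative compactness (the preimage is closed and bounded, hence compact, and contained in the open disk). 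This is the one place where the hypothesis $d \ge 2$ (growth faster than linear) is genuinely used.

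The main obstacle I anticipate is purely bookkeeping: making the change-of-variables argument for the first inequality clean in the presence of critical points of $f$ and simple poles of $\Q$. The honest way is to remove from $V$ the finite set of critical values together with the (discrete) images of poles, perform the branch-by-branch computation on the resulting open set where $f$ is a genuine covering, and then invoke that the removed set is Lebesgue-null. Everything else — the strictness coming from $V\setminus\overline U$ having positive area, and the existence of $V$ — is routine.
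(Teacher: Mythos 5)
Your proof is correct and follows essentially the same route as the paper's: the triangle inequality followed by the holomorphic change of variables (co-area/branch-by-branch) for the nonstrict inequality $\|f_*\Q\|_V\le\|\Q\|_U$, and the relative compactness of $U$ in $V$ for the strict inequality. The only difference is that you spell out details the paper leaves implicit — the measure-zero role of critical values and poles, the positivity of $\iint_{V\setminus\overline U}|q|$ for nonzero meromorphic $q$, the degenerate case $\Q=0$, and the existence of a suitable $V$ (which is actually part of the lemma's hypothesis rather than something to be proved) — but none of this changes the underlying argument.
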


\proof The strict inequality on the right is a consequence of the fact that $ U $ is
relatively compact in $ V $. The  inequality on the left comes from
\begin{align*}
\|f_* \Q\|_V & =\iint_{z\in V} \left|\sum_{f(w)=z} \frac{q(w)}{f'(w)^2}\right|\ |{\rm d}z|^2 \\
& \leq  \iint_{z\in V} \sum_{f(w)=z}\left| \frac{q(w)}{f'(w)^2}\right| \ |{\rm d}z|^2\\
 & =  \iint_{w\in U} \bigl| q(w)\bigr| \ |{\rm d}w|^2 = \|\Q\|_U.
 \end{align*}
\qed

\begin{coro} \label{qneqq coro_f}
If $ f: \C \to \C $ is a polynomial and if $ \Q \in \Q (\C) $, then
$ f_* \Q \neq \Q $.
\end{coro}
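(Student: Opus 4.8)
The plan is to deduce Corollary~\ref{qneqq coro_f} from Lemma~\ref{lemme_thurstonpoly} by a norm-monotonicity argument on a suitable exhaustion of $\C$ by round disks. The point is that $\Q\in\Q(\C)$ has finitely many poles, so its total mass $\|\Q\|_{\{|z|<R\}}$ is finite on any bounded disk but in general tends to a finite or infinite limit as $R\to\infty$; in either case the strict inequality in the lemma will give a contradiction if $f_*\Q=\Q$.

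Concretely, I would argue as follows. Suppose for contradiction that $f_*\Q=\Q$ for some $\Q\in\Q(\C)$. First note $\Q\ne 0$ is the only case to worry about (if $\Q=0$ there is nothing to prove — although as stated the corollary then fails trivially, so implicitly $\Q$ ranges over nonzero differentials; I would state it for $\Q\neq 0$). Pick $R$ large enough that all poles of $\Q$ lie in $\{|z|<R\}$ and that $U\eqdef f^{-1}(V)$ is relatively compact in $V$ for $V=\{|z|<R\}$ — this is possible because $f$ is a non-constant polynomial, hence proper, so $f^{-1}(V)$ is bounded and for $R$ large its closure sits inside $V$ (as $|f(w)|\to\infty$ faster than $|w|$). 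Applying Lemma~\ref{lemme_thurstonpoly} to this $V$ gives
\[
\|\Q\|_V=\|f_*\Q\|_V\le \|\Q\|_U<\|\Q\|_V,
\]
using $f_*\Q=\Q$ in the first equality. Since $\|\Q\|_V$ is finite and positive (positive because $\Q\ne0$ and $q$ is meromorphic hence not a.e.\ zero on the open set $V$), this chain reads $\|\Q\|_V<\|\Q\|_V$, a contradiction.

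The only genuine point requiring care is the choice of $V$: one must check that a single round disk $V$ simultaneously contains all the poles of $\Q$ \emph{and} has $f^{-1}(V)\Subset V$. The second condition is standard for polynomials of degree $\ge 1$: writing $f(z)=a_kz^k+\cdots$ with $k\ge1$, one has $|f(z)|\ge |z|$ and in fact $|f(z)|>R$ whenever $|z|=R$ for $R$ large, so $f^{-1}(\{|z|<R\})\subset\{|z|<R\}$ with compact closure inside. Taking $R$ also larger than the modulus of every pole of $\Q$ handles the first condition. There is no real obstacle here — the work is entirely in Lemma~\ref{lemme_thurstonpoly}, which has already been proved; the corollary is just the observation that a nonzero finite-mass quantity cannot be strictly less than itself. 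I would also remark that the corollary is exactly the input needed later: it forces $(f_c)_*\Q-\Q\ne0$, whose relevant coefficient will be the non-vanishing partial derivative establishing smoothness of $\overline{X_n}$.
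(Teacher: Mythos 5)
Your proof is correct and is precisely the derivation the paper intends: the corollary is stated without proof as an immediate consequence of Lemma~\ref{lemme_thurstonpoly}, obtained exactly by your argument of choosing $R$ large enough that $f^{-1}(\{|z|<R\})$ is relatively compact in $\{|z|<R\}$ and applying the strict inequality. Your observation that the statement should implicitly assume $\Q\ne 0$ is a fair catch (the zero differential trivially satisfies $f_*\Q=\Q$); in every later application the constructed $\Q$ has at least one nonzero residue, so this tacit hypothesis is always met.
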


\begin{rema}  Thurston's contraction principal  says that if
$ \Q $ is a meromorphic quadratic differential on $ \P $ and $ f: \P
\to \P $ is a rational function, if one requires $f_*\Q=\Q$ with
$\Q\not=0$, then  $ f $ is necessarily a Latt\`es example.
\end{rema}

The formulas below appeared in \cite{LG} chapter $2$, we write them together as a lemma.
\begin{lemma} [Levin] \label{lemme_fc}
For $f=f_c$, we have
\begin{equation}\label{pushforward} \left\{\begin{array}{ll} \mystrut f_*\left(\dfrac{ \dz^2}{z}\right)=0\vspace{0.1cm}\\
f_*\left(\dfrac{\dz^2}{z-a}\right)=\dfrac{1}{f'(a)}
\left(\dfrac{\dz^2}{z-f(a)}-\dfrac{\dz^2}{z-c}\right) & \text{if } a\ne 0\vspace{0.1cm}\\
f_*\left(\dfrac{\dz^2}{(z-a)^2}\right) =
\dfrac{\dz^2}{(z-f(a))^2}-\dfrac{d-1}{af'(a)}\left(\dfrac{\dz^2}{z-f(a)}-\dfrac{\dz^2}{z-c}\right)
& \text{if } a\ne 0. \end{array}\right.\end{equation}
\end{lemma}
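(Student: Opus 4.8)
Here I would prove Levin's pushforward formulas \eqref{pushforward} directly from the definition of $f_*$.

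The plan is to compute the three pushforwards straight from the definition $f_*(q\,\dz^2)=(Tq)\,\dz^2$ with $Tq(z)=\sum_{f(w)=z}q(w)/f'(w)^2$, exploiting that for $f=f_c$ one has $f'(w)=dw^{d-1}$, so the only zero of $f'$ is $w=0$ (of order $d-1$) and the only critical value of $f$ is $c$. The organising device is the elementary residue identity: for any rational function $R$ on $\C$ with $R(w)=O(w^{-2})$ as $w\to\infty$ and any $z\in\C$ that is not a critical value of $f$ and does not lie in the finite image under $f$ of the poles of $R$,
\[
\sum_{f(w)=z}\frac{R(w)}{f'(w)}\;=\;-\sum_{p}\on{Res}_{w=p}\frac{R(w)}{f(w)-z},
\]
the sum on the right running over the (finitely many) poles $p$ of $R$. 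This is immediate: $w\mapsto R(w)/(f(w)-z)$ is rational and, since $d\ge2$, is $O(w^{-(d+2)})$ at infinity, hence has no residue at $\infty$ and its residues over $\C$ sum to zero; under the genericity hypotheses its poles are precisely the $d$ simple $f$-preimages of $z$, each with residue $R(w)/f'(w)$, together with the poles of $R$.

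Apply this with $R=q/f'$, so $R/f'=q/(f')^2$ and the poles of $R$ are those of $q$ together with $w=0$. For $q(w)=1/w$ we get $R(w)=1/(dw^d)$, whose only pole is $w=0$; since $1/(w^d+c-z)$ has a Taylor expansion in powers of $w^d$ near $0$, the Laurent series of $1/(dw^d(w^d+c-z))$ contains no $w^{-1}$ term when $d\ge 2$, so the residue at $0$ is $0$ and $Tq\equiv 0$, which is the first formula. For $q(w)=1/(w-a)$ with $a\ne0$ we get $R(w)=1/\bigl(d(w-a)w^{d-1}\bigr)$, with a simple pole at $a$ and a pole of order $d-1$ at $0$; the residue at $a$ is $1/\bigl(f'(a)(f(a)-z)\bigr)$, while the residue at $0$ is computed by expanding $1/(w-a)$ and $1/(w^d+c-z)$ in powers of $w$ and extracting the coefficient of $w^{d-2}$, giving $-1/\bigl(f'(a)(c-z)\bigr)$. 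Inserting these into the residue identity and using $1/(f(a)-z)=-1/(z-f(a))$, $1/(c-z)=-1/(z-c)$ gives the second formula. For the third formula, note that $1/(w-a)^2=\partial_a\bigl(1/(w-a)\bigr)$ and that $Tq$ is a \emph{finite} sum over the $a$-independent preimages of $z$, so $\partial_a$ commutes with $f_*$; differentiating the second formula in $a$, with $\partial_a\bigl(1/f'(a)\bigr)=-(d-1)/\bigl(af'(a)\bigr)$ and $\partial_a\bigl(1/(z-f(a))\bigr)=f'(a)/(z-f(a))^2$, reproduces exactly the stated right-hand side. In each case both sides are rational in $z$ (and $a$), so the identities, proved on the generic locus, hold identically.

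The one step needing genuine care is the residue at $w=0$ for the second formula, where $f'$ vanishes to order $d-1$: one has to multiply two power series and read off the coefficient of $w^{d-2}$, observing that for $q=1/(w-a)$ only the constant term $1/(c-z)$ of $1/(w^d+c-z)$ contributes (since $d-2<d$), and then double-check that the degenerate case $d=2$, in which the pole of $R$ at $0$ is merely simple, is covered by the same formula. All remaining computations are short, and the $1/(w-a)^2$ case requires no independent second-order residue calculation because it follows from the $1/(w-a)$ case by the differentiation in $a$ described above.
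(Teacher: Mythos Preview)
Your proof is correct. The paper itself does not give a proof of this lemma; it merely records the formulas and cites Levin \cite{LG}, chapter~2, so there is no ``paper's own proof'' to compare against.

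A couple of remarks on your argument. The residue identity you state is exactly the right organising principle; it cleanly explains why the critical value $c$ is the only new pole appearing in the pushforward. Your computation of the residue at $w=0$ for the second formula is correct: since $d-2<d$, only the constant term $1/(c-z)$ of the series for $1/(w^d+c-z)$ can contribute, and the coefficient of $w^{d-2}$ in $-\dfrac{1}{a}\sum_{k\ge0}(w/a)^k$ is $-1/a^{d-1}$, giving residue $-1/\bigl(f'(a)(c-z)\bigr)$ as you wrote; the case $d=2$ is indeed subsumed. The shortcut of obtaining the double-pole formula by differentiating the simple-pole formula in $a$ is legitimate because the preimage set $f^{-1}(z)$ is independent of $a$, so $\partial_a$ commutes with the finite sum defining $Tq$; this avoids a more tedious second-order residue computation at $0$. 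Finally, your closing remark that both sides are rational in $z$ (and $a$) so equality on the generic locus implies equality everywhere is the correct way to handle the finitely many excluded values of~$z$.
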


\subsection{Proof of Theorem \ref{Smooth}}

\begin{lemma} [compare with \cite{Mil}] \label{lemme_dzndc}
Given $ z \in \C $, for $ n \geq 0 $ and $d\geq2$, define $ z_n: c
\mapsto f_c ^{\circ n} (z) $ and $\delta_n=f_c'(z_n)=dz_n^{d-1}$.
Then
\[\frac{\dz_n}{dc} = 1+\delta_{n-1}+\delta_{n-1}\delta_{n-2}+\ldots+
\delta_{n-1}\delta_{n-2}\cdots\delta_1.\]
\end{lemma}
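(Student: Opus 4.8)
\textbf{Proof plan for Lemma \ref{lemme_dzndc}.}

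The plan is to prove the formula by induction on $n$, differentiating the recursive relation $z_{n} = f_c(z_{n-1}) = z_{n-1}^d + c$ with respect to $c$. First I would record the base case: for $n=0$ we have $z_0 = z$, which is constant in $c$, so $\dz_0/dc = 0$, and the empty sum on the right-hand side is $0$; for $n=1$, $z_1 = z^d + c$, so $\dz_1/dc = 1$, matching the single term $1$ in the claimed expression. Then for the inductive step, I would differentiate $z_n = z_{n-1}^d + c$ to obtain
\[
\frac{\dz_n}{dc} = d\,z_{n-1}^{d-1}\cdot\frac{\dz_{n-1}}{dc} + 1 = \delta_{n-1}\cdot\frac{\dz_{n-1}}{dc} + 1,
\]
using the definition $\delta_{n-1} = f_c'(z_{n-1}) = d z_{n-1}^{d-1}$.

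Next I would substitute the inductive hypothesis
\[
\frac{\dz_{n-1}}{dc} = 1 + \delta_{n-2} + \delta_{n-2}\delta_{n-3} + \ldots + \delta_{n-2}\delta_{n-3}\cdots\delta_1
\]
into the recursion. Multiplying through by $\delta_{n-1}$ gives
\[
\delta_{n-1}\cdot\frac{\dz_{n-1}}{dc} = \delta_{n-1} + \delta_{n-1}\delta_{n-2} + \ldots + \delta_{n-1}\delta_{n-2}\cdots\delta_1,
\]
and adding the $+1$ from the recursion yields exactly
\[
\frac{\dz_n}{dc} = 1 + \delta_{n-1} + \delta_{n-1}\delta_{n-2} + \ldots + \delta_{n-1}\delta_{n-2}\cdots\delta_1,
\]
which is the claimed identity for index $n$. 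This closes the induction.

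This lemma is essentially a routine computation — the chain rule applied along the orbit — so there is no serious obstacle; the only points requiring a little care are bookkeeping the indices of the telescoping product (making sure the bottom index stays at $1$ and not $0$, which reflects that $z_0$ does not depend on $c$) and checking that the degenerate cases $n=0$ and $n=1$ agree with the convention that an empty sum is zero. One could alternatively present it non-inductively by writing $\dz_n/dc = \sum_{k=1}^{n}\partial_c z_k \cdot \prod_{j=k+1}^{n}\partial_{z_{j-1}} z_j$ via the multivariate chain rule along the composition $f_c^{\circ n}$, and then noting $\partial_c z_k = 1$ and $\partial_{z_{j-1}} z_j = \delta_{j-1}$, but the inductive argument is cleaner to write out and is the version I would include.
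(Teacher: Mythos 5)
Your proof is correct and is the same argument the paper uses: differentiate $z_n = z_{n-1}^d + c$ to get the recursion $\frac{\dz_n}{dc} = 1 + \delta_{n-1}\frac{\dz_{n-1}}{dc}$ with $\frac{\dz_0}{dc}=0$, then induct. You simply spell out the base case and the unfolding of the recursion more explicitly than the paper does.
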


\proof From $z_n=z_{n-1}^d+c $, $d\geq2$, we obtain
\[\frac{\dz_n}{dc} = 1 + \delta_{n-1} \frac{\dz_{n-1}}{dc} \quad \text{with} \quad
\frac{\dz_0}{dc} = 0. \] The result follows by induction. \qed

{\noindent\em Proof of Theorem} \ref{Smooth}.

 Let $ P_n (c, z) \eqdef f_c ^{\circ n} (z)-z $ and consider the algebraic curve
 $$ Y_n :=\{(c,z)\in \C^2 \mid P_n (c, z) = 0 \}. $$
 If $ (c, z) \in Y_n $, the point $ z $ is
periodic for $f_c $ of period $m\le n$. Then $m$ divides $n$\footnote{use the formula $0=f_c^{\circ n}(z)-z=
f_c^{\circ km+\ell}(z)-z=f_c^{\circ \ell}(f_c^{\circ km}(z))-z=f_c^{\circ \ell}(z)-z$
and the minimality of $m$ to conclude that $m$ divides $n$.}.
Therefore $Y_n$ is the set of $(c,z)$ such that $z$ is periodic for $f_c$ of period $m\le n$ and $m$ dividing $n$.

As $Y_n$ is a closed subset of $\C^2$, we have $\overline{X_n}\subset Y_n$.

We decompose $Y_n$ into
\begin{eqnarray*}
 Y_n &=& X_n \\ && \sqcup\ \{(c,z)\mid \text{ $z$ is of period $n$ for $f_c$ with multiplier $1$}\} \\
&& \sqcup\  \{(c,z)\mid \text{ $z$ is of period $m$ for $f_c$ with $m<n$ and $m$ dividing $n$}\}
\end{eqnarray*}

We will examine case by case points in $Y_n$, determine  points   in $\overline{X_n}$ and
establish the smoothness of $\overline{X_n}$ at each of these points.

{\noindent\bf Case 1}.
Consider a point $ (c_0, z_0) \in X_n \subset Y_n $.

If $ (c, z) \in Y_n $ is close to
$ (c_0, z_0) \in X_n $, the points of the orbit of $ z $ are close to
points of the orbit of $ z_0 $ and there are therefore at least $ n $ distinct points
in the orbit of $z$. It follows that the  period of $z$ is equal to $ n $. This shows that in a
neighborhood of $ (c_0, z_0) $, the curves $ X_n $ and $ Y_n $ coincide. It
suffices to show that $ Y_n $ is smooth in a neighborhood of
$ (c_0, z_0) $.
As $ [f_{c_0} ^{\circ n}] '(z_0) \neq 1 $, we have
\[\frac{\partial  P_n}{\partial  z}(c_0,z_0)\neq 0.\]
The implicit function theorem implies that $ Y_n $, therefore $X_n$, is smooth in a
neighborhood of $ (c_0, z_0) $.

{\noindent\bf Case 2}.
Now consider a point $(c_0,z_0)\in Y_n$ such that $z_0$ is of period equal to $n$ for $f_{c_0}$
with multiplier $1$.

Fix any $\ell\ge n$ that is a multiple of $n$. And consider $P_\ell$ and $Y_\ell$. We know that
\REFEQN{ell} (c_0,z_0)\in Y_\ell\quad\text{and}\quad[f_{c_0}^{\ell}]'(z_0)=1\ .\ENDEQN

{\noindent\bf Claim}. For any triple $(c_0,z_0,\ell)$ satisfying \Ref{ell}, we have $\dfrac{\partial P_\ell}{\partial c}(c_0,z_0)\neq 0$.

{\noindent\rm Proof}.
For $k\ge 0$, define inductively $z_{k+1}=f_{c_0}(z_{k})$ and define $\delta_k\eqdef f'_{c_0}(z_k)$. We have, by Lemma \ref{lemme_dzndc}
\[\frac{\partial  P_\ell}{\partial  c} (c_0,z_0)= \frac{d}{dc}( f_c^{\circ \ell}(z_0)-z_0)\Bigr|_{c_0}=1+\delta_{\ell-1}+\delta_{\ell-1}\delta_{\ell-2}+\ldots+
\delta_{\ell-1}\delta_{\ell-2}\cdots\delta_1.\]
Now consider the
quadratic differential $ \Q \in \Q (\C) $ defined by
\[\Q(z)\eqdef \sum_{k=0}^{\ell-1}\frac{\rho_k}{z-z_k}\dz^2,\quad\text{with
}\rho_k=\delta_{\ell-1}\delta_{\ell-2}\cdots\delta_k.\]  Applying Lemma \ref{lemme_fc}, and writing $ f $ for
$ f_{c_0} $, we obtain
\[f_*\Q(z)
=\sum_{k=0}^{\ell-1}\frac{\rho_k}{\delta_k}\left(\frac{\dz^2}{z-z_{k+1}}
-\frac{\dz^2}{z-c_0}\right) = \Q(z) -\frac{\partial  P_\ell}{\partial  c}
(c_0,z_0)\cdot\frac{\dz^2}{z-c_0}.\] By Corollary
\ref{qneqq coro_f}, we can not have $ f_* \Q = \Q $. It follows that \[\frac{\partial  P_\ell}{\partial  c } (c_0, z_0) \neq 0. \]
This  ends the proof of the claim.

Now let $\ell=n$, by implicit function theorem,there exists unique locally holomorphic function $c(z)$ with
$f^n_{c(z)}(z)=z, c(z_0)=z_0$ and $c^\prime(z_0)=0 (\text{for}\  \frac{\partial  P_\ell}{\partial  z } (c_0, z_0)= 0)$. Then there is neighborhood $U$ of $(c_0,z_0)$ in $\C^2$ such that
\[Y_n\cap U=\{ (c(z),z) | |z-z_0|<\varepsilon \}.\]
As $z_0$ is a $n$ periodic point of $f_{c_0}$ and the map $z\mapsto [f_{c(z)}^{\circ n}]'(z)$ is holomorphic and {\bf can not
be}\footnote{One can prove that $D:=\{(c,z)\mid f_c^{\circ n}(z)=z,
[f_{c}^{n}]'(z)=1\}$ is finite as follows: Denote by $X(c)$,
resp. $Y(z)$ the resultant of the two polynomials $f_{c}^{\circ
n}(z)-z$ and $[f_{c}^{n}]'(z)-1$ considered as polynomials of
$z$, resp. of $c$. Then $X(c)$ is a polynomial of $c$, resp. $Y(z)$
is a polynomial of $z$. The projection of $D$ to each coordinate
equals the zeros of $X$, resp. of $Y$. As no point of the form
$(0,z)$, $(c,0)$ is in $D$, we have $X(0)\ne 0\ne Y(0)$ so $X$, $Y$
each has finite many roots. As $D\subset (X^{-1}(0)\times \C)\cap
(\C\times Y^{-1}(0))$ we know that $D$ is finite.} constantly $1$, we can choose $\varepsilon$ small enough such that
$z$ is $n$ periodic point of $f_{c(z)}$ with multiplier $\neq 1$ for $|z-z_0|<\varepsilon$. Then
\[U\cap Y_n \setminus \{ (c_0,z_0) \}\subset U\cap X_n\subset U\cap Y_n.\]
It follows $(c_0,z_0)\in \partial X_n$ and $U\cap Y_n$ is a neighborhood of $(c_0,z_0)$ on $\overline{X_n}$.
Then $\overline{X_n}$ is smooth at $(c_0,z_0)$ and parametered locally by $z$.

{\noindent\bf Case 3}. Finally consider $(c_0,z_0)\in Y_n$ so that $z_0$ is of period $m< n$ for $f_{c_0}$
with $m$ dividing $n$.

Note that $Y_m\subset Y_n$.

If  $[f_{c_0}^{n}]'(z_0)\ne 1$ then $[f_{c_0}^{m}]'(z_0)\ne 1$. By the existence and the unicity of
 the implicit function
theorem the local solutions of $f_c^n(z)-z=0$ and $f_c^m(z)=z$ coincide,
that is, $Y_m$ and $Y_n$ coincide locally. So at point $(c_0,z_0)$, $Y_n$ is locally the graph of a  holomorphic function $z(c)$ with $z(c_0)=z_0$ and $z(c)$ is $m$ periodic point of
$f_c$. It follows that $(c_0, z_0)\notin \overline{X_n}$.

If $[f_{c_0}^{n}]'(z_0)=1$ and $[f_{c_0}^{m}]'(z_0)=1$,
 then both triples $(c_0,z_0,m)$ and $(c_0, z_0,n)$ satisfy \Ref{ell}. The claim in Case 2 and implicit
 function theorem imply that $Y_m$ and $Y_n$ again coincide in a neighborhood of $(c_0,z_0)$.
 For the same reason as above, $(c_0,z_0)\notin \overline{X_n}$.

Set $\rho:= [f_{c_0}^{m}]'(z_0)$. We consider now the only remaining case $\rho\ne 1$ and $\rho^{n/m}=[f_{c_0}^{n}]'(z_0) =1$.

 Fix any integer $s\ge 2 $  such that $\rho^s=1$. Let $c_\ast$ be any point outside Mandelbrot set, then each zero point of $f_{c_\ast}^m(z)-z$ is simple. It follows that 
 $f_{c_\ast}^m(z)-z$ divides $f_{c_\ast}^{\circ ms}(z)-z$. Since $c_\ast$ is any point outside Mandelbrot set, the polynomial $f_c^m(z)-z$ must divides $f_{c}^{ms}(z)-z$.
 Let $ P (c, z) $ be the polynomial defined by the equation:
\begin{equation} \label{eq: CRD}
f_c^{\circ ms}(z)-z = \bigl(f_c^{\circ m}(z)-z\bigr)\cdot P(c,z).
\end{equation}
  {\noindent\bf Claim.} Let $Z_s:=\{(c,z)\mid P(c,z)=0\}$. Then $(c_0,z_0)\in Z_s$ and there is a neighborhood $V$ of $(c_0,z_0)$ in $\C^2$ such that
\[Z_s\cap V=\{ (c(z),z) | |z-z_0|<\varepsilon_0, c(z) \text{\ is holomorphic with}\ c(z_0)=c_0 \text{\ and} \ c^\prime(z_0)=0\}.\]

Proof. We will prove at first that  the map $ z \mapsto f_{c_0} ^{ ms} (z)-z $ has a zero of order at least $3$
at $z_0$. Define $F(z)=f^m(z+z_0)-z_0$, then it is equivalent to show the function $F^{s}(z):=f_{c_0}^{ms}(z+z_0)-z_0$ has a local expansion $z+O(z^3)$ at $0$.
We have $F(z)=\rho z + az^2+O(z^3)$
in a neighborhood of $0$. One checks by induction
$$\forall\,k\ge 1,\quad F^{\circ k}(z)=\rho^k z+a\rho^{k-1}(1+\rho+\rho^2+\cdots +\rho^{k-1})z^2+O(z^3)\ .$$
But $\rho\ne 1$ and $\rho^s=1$, it follows that $1+\rho+\rho^2+\cdots +\rho^{s-1}=0$ and $F^{\circ s}(z)=z+O(z^3)$.

Since
  $ z \mapsto f_{c_0} ^{\circ m} (z)-z $ has a simple zero, we see from \Ref{eq: CRD} that
$ z \mapsto P (c_0, z) $ has a zero of order at least $2$ at $z_0$. Therefore $(c_0,z_0)\in Z_s$ and
\REFEQN{graph} \frac{\partial P}{\partial z} (c_0, z_0) = 0. \ENDEQN
We proceed now to prove
\begin{equation} \label{partial} \frac{\partial P}{\partial c} (c_0, z_0) \neq 0. \end{equation}

This will be down in two steps:

{\noindent\bf Step 1}. Let $ Q (c, z) \eqdef f_c ^{\circ m} (z)-z  $. We have
\[Q (c_0, z_0) = 0 \quad \text{and} \quad \frac{\partial Q}{\partial z} (c_0, z_0) = \rho-1 \neq 0. \] According to the implicit function theorem, there is a germ of a holomorphic function
$ \zeta : (\C, c_0) \to (\C, z_0) $ with $ Q \bigl (c, \zeta (c) \bigr) = 0 $. In other words, $ \zeta (c) $ is a periodic point of period $ m $ for
$  f_c$. Let $ \rho_c$ denote the multiplier of  $ \zeta (c) $ for $f_c$ and  set
\[\dot \rho \eqdef \frac{d \rho_c}{dc} \big| _{c_0}. \]

\begin{lemma}
We have
\[\frac{\partial P}{\partial c} (c_0, z_0) = \frac{s \cdot \dot \rho}{\rho (\rho-1)}. \]
\end{lemma}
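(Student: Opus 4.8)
The plan is to compute $\partial P/\partial c$ at $(c_0,z_0)$ by differentiating the defining identity \Ref{eq: CRD} in $c$ and then in $z$, and extracting the needed coefficient through a local Taylor expansion near $z_0$. Write $Q(c,z)=f_c^{\circ m}(z)-z$ and $R(c,z)=f_c^{\circ ms}(z)-z$, so $R=Q\cdot P$. Along the curve $z=\zeta(c)$ we have $Q(c,\zeta(c))\equiv 0$, hence also $R(c,\zeta(c))\equiv 0$, so both $Q$ and $R$ vanish on this germ of curve and $P$ is the "second factor". The idea is to expand all three functions in the local coordinate $w=z-\zeta(c)$ (not $z-z_0$), so that $Q(c,z)=(\rho_c-1)w+O(w^2)$ with leading coefficient $\rho_c-1$, and to expand $R$ using the chain-rule computation already carried out in the proof of the claim: near the fixed point $\zeta(c)$ of $f_c^{\circ m}$, one has $f_c^{\circ ms}(\zeta(c)+w)-\zeta(c) = \rho_c^s w + a_c\rho_c^{s-1}(1+\rho_c+\cdots+\rho_c^{s-1})w^2 + O(w^3)$. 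Thus $R(c,z) = (\rho_c^s-1)w + b_c\,w^2 + O(w^3)$ where the coefficient of $w$ is exactly $\rho_c^s-1$.

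Next I would divide: since $P = R/Q$ and $Q = (\rho_c-1)w(1+\cdots)$, the value $P(c,\zeta(c))$ equals the ratio of leading coefficients, namely
\[
P(c,\zeta(c)) = \frac{\rho_c^s - 1}{\rho_c - 1}.
\]
At $c=c_0$ this is $0/( \rho-1)=0$ since $\rho^s=1$, consistent with $(c_0,z_0)\in Z_s$. Now differentiate this identity in $c$ at $c_0$. The left side gives $\frac{d}{dc}P(c,\zeta(c))\big|_{c_0} = \frac{\partial P}{\partial c}(c_0,z_0) + \frac{\partial P}{\partial z}(c_0,z_0)\,\zeta'(c_0)$, and by \Ref{graph} the second term vanishes, so the left side is exactly $\frac{\partial P}{\partial c}(c_0,z_0)$. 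For the right side, apply the quotient/product rule to $(\rho_c^s-1)/(\rho_c-1)$ at $c_0$: writing $g(c)=\rho_c^s-1$ and $h(c)=\rho_c-1$ we have $g(c_0)=0$, $h(c_0)=\rho-1\ne 0$, $g'(c_0)=s\rho^{s-1}\dot\rho=(s/\rho)\dot\rho$ (using $\rho^s=1$), so $\frac{d}{dc}(g/h)\big|_{c_0} = g'(c_0)/h(c_0) = \frac{s\dot\rho}{\rho(\rho-1)}$, which is the claimed formula.

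The main obstacle is justifying the expansion of $R$ in the \emph{moving} coordinate $w=z-\zeta(c)$ and the legitimacy of dividing two convergent power series in $w$ with $c$-dependent coefficients to recover $P(c,\zeta(c))$ — i.e.\ making precise that the factorization $R=QP$ together with the local forms of $Q$ and $R$ forces $P(c,\zeta(c))$ to equal the displayed ratio, uniformly for $c$ near $c_0$. One clean way is: both $w\mapsto Q(c,z_0+w+(\zeta(c)-z_0))$ and $w\mapsto R(\dots)$ are holomorphic in $(c,w)$, $Q$ has a simple zero in $w$ at $w=0$ for each $c$ near $c_0$, so $Q = (\rho_c-1)w\cdot u(c,w)$ with $u$ holomorphic and $u(c,0)=1$; then $P = R/Q$ is holomorphic in $(c,w)$ near $(c_0,0)$ and setting $w=0$ gives $P(c,\zeta(c)) = \big[\text{coeff of }w\text{ in }R\big]/(\rho_c-1) = (\rho_c^s-1)/(\rho_c-1)$. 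The only other point needing care is that $a_c$ (the quadratic coefficient of $f_c^{\circ m}$ at $\zeta(c)$) depends holomorphically on $c$ but drops out entirely because it multiplies $(1+\rho+\cdots+\rho^{s-1})$, which is $0$ at $c_0$; hence it never enters the final formula, and no computation of $a_c$ is needed.
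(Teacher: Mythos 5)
Your proof is correct and is essentially the same as the paper's: both derive the identity $P(c,\zeta(c)) = (\rho_c^s-1)/(\rho_c-1)$ — the paper by differentiating \Ref{eq: CRD} in $z$ and evaluating at $(c,\zeta(c))$, you by factoring out the simple zero at $w=z-\zeta(c)=0$, which is the same computation phrased as a local power-series division — and then both differentiate this identity in $c$, use \Ref{graph} to kill the $\partial P/\partial z$ term, and simplify the quotient rule with $\rho^s=1$.
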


\begin{proof}
Differentiating the equation (\ref{eq: CRD}) with respect to $ z $, and then evaluating at $ \bigl (c, \zeta (c) \bigr) $, we get:
\[\rho_c^s-1 = (\rho_c-1)\cdot P\bigl(c,\zeta(c)\bigr) + \underset{=0}{\underbrace{\bigl(f_{c}^{m}\bigl(\zeta(c)\bigr)-\zeta(c)\bigr)}}\cdot \frac{\partial P}{\partial z}\bigl(c,\zeta(c)\bigr) =  (\rho_c-1)\cdot P\bigl(c,\zeta(c)\bigr).\]
Setting
\[R(c)\eqdef P\bigl(c,\zeta(c)\bigr) = \frac{\rho_c^s-1}{\rho_c-1}, \]
we have
\[R'(c_0) =\frac{\partial P}{\partial c}(c_0,z_0) +\underset{=0}{\underbrace{ \frac{\partial P}{\partial z}(c_0,z_0) }}\cdot \zeta'(c_0) =
 \frac{\partial P}{\partial c}(c_0,z_0) .\]
Using $ \rho ^ s = 1 $ and $ \rho ^{s-1} = 1 / \rho $, we deduce that
\[\frac{\partial P}{\partial c}(c_0,z_0) = \frac{d}{dc}\left(\frac{\rho_c^s-1}{\rho_c-1}\right)\Big|_{c_0} =\left( \frac{s\rho^{s-1}}{\rho-1}-
\frac{\rho^s-1}{(\rho-1)^2}\right)\frac{d\rho_c}{dc}\Big|_{c_0} = \frac{s\cdot \dot \rho}{\rho(\rho-1)}.\qedhere\]
\end{proof}

{\noindent\bf Step 2.}   $\dot \rho \neq 0$. The proof of this fact will be postponed   to the following section $2.3$ using quadratic differential with
double poles
(see also \cite{DH} for a parabolic implosion approach).

This ends the proof of \Ref{partial}, as well as the proof of the claim by combining
  \Ref{partial} and  the implicit function theorem plus the observation that $(c_0,z_0)\in Z_s$.

Write now $\rho=e^{2\pi i u/v}$ with $u,v$ co-prime and $v>0$. Then any $s$ satisfying $\rho^s=1$
takes the form $s=kv$ for some integer $k\ge 1$.
With the same reason as that of existence of polynomial $P(c,z)$ in (\ref{eq: CRD}), there are polynomials $g, h$ such that
$$f_c^{\circ ms}(z)-z= f_c^{\circ mkv}(z)-z=( f_c^{\circ mv}(z)-z) g(c,z)= ( f_c^{\circ m}(z)-z) h(c,z)g(c,z)\ .$$
By definition we have $Z_s=\{(c,z)\mid g(c,z)h(c,z)=0\}\supset \{(c,z)\mid h(c,z)=0\}=Z_v$.
By the claim in Case 3, we conclude that $Z_s$ and $Z_v$ coincide in a neighborhood of $(c_0, z_0)$ as the graph
of a single holomorphic function $c(z)$ with vanishing derivative at $z_0$.

{\noindent\bf Remark}:(1)
If necessary, we can decrease $\varepsilon_0$ in claim of case 3 such that
  $f^m_{c(z)}(z)-z\neq 0$ for $0<|z-z_0|<\varepsilon_0$. Otherwise, there exist a sequence $\{z_k\}$ with
  $z_k\rightarrow z_0$ ( correspondingly, $c_k:=c(z_k)\rightarrow c_0$ ) such that
  $f^m_{c_k}(z_k)-z_k= 0$ and $ h(c_k,z_k)g(c_k,z_k)=0$. It follows that $ [f_{c_k}^{\circ ms}]^\prime(z_k)-1=0$, that is, $\{c_k\}$ is a sequence of parabolic parameter with period of parabolic orbit less than $m$ converging to $c_0$. It is impossible.

  \qquad\ (2) \qquad$Z_s=( Y_n\setminus Y_m )\cup\{(c_0,z_0)\},\ X_n\subset Y_n\setminus Y_m$

\begin{lemma} \label{111}
There exists $0<\varepsilon_1<\varepsilon_0$ such that $z$ is $mv$ periodic point of $f_{c(z)}$ with multiplier$\neq 1$ for $0<|z-z_0|<\varepsilon_1$. $c(z)$ is defined in the claim of case 3.
\end{lemma}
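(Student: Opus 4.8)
The plan is to leverage the structural picture already established: in the claim of Case 3 we know that $Z_s\cap V$ is the graph of a holomorphic function $c(z)$ with $c(z_0)=c_0$, $c'(z_0)=0$; moreover, by Remark (1) we may shrink $\varepsilon_0$ so that $f^m_{c(z)}(z)-z\neq 0$ for $0<|z-z_0|<\varepsilon_0$. First I would observe that along the graph $(c(z),z)$ with $0<|z-z_0|<\varepsilon_0$, the point $z$ is a periodic point of $f_{c(z)}$ whose exact period $\mu$ divides $mv$ (since $f_{c(z)}^{\circ mv}(z)=z$, using that $f_c^{\circ mv}(z)-z=(f_c^{\circ m}(z)-z)h(c,z)$ and $P(c(z),z)=0$, hence a fortiori $h(c(z),z)g(c(z),z)=0$ forces $f^{\circ mv}_{c(z)}(z)=z$) but $\mu$ does not divide $m$ (by the nonvanishing of $f^m_{c(z)}(z)-z$). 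The arithmetic step is then to rule out every intermediate divisor: if $\mu\mid mv$ and $\mu\nmid m$, I claim $\mu=mv$ when $v$ is prime, and in general one argues using continuity that $\mu$ is locally constant on the punctured graph and equal to its value in a neighborhood of $z_0$.

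The key continuity input is that $z_0$ has exact period $m$ for $f_{c_0}$ with multiplier $\rho=e^{2\pi i u/v}$, a primitive $v$-th root of unity; this is exactly the parabolic situation. Near such a parabolic point, the $m$-periodic point $z_0$ persists (as $\zeta(c)$ from Step 1) but is the only periodic point of period $\le m$ near $z_0$, while the "extra" periodic points emerging from the parabolic orbit under perturbation have exact period $mv$. So for $z$ near $z_0$ on the graph, other than $z_0$ itself, $z$ cannot have period dividing $m$; and it cannot have period $mk$ with $k$ a proper divisor of $v$ strictly bigger than $1$, because such a periodic point would have to satisfy $f^{\circ mk}_{c(z)}(z)=z$, and by the same divisibility-of-defining-polynomials argument $f_c^{\circ mk}(z)-z$ divides $f_c^{\circ mv}(z)-z$ and is divisible by $f_c^{\circ m}(z)-z$, so it has its own curve $Z_k\subset Z_v$ passing through $(c_0,z_0)$ — but then by the Case 3 claim $Z_k$ is again a single graph through $(c_0,z_0)$ with vanishing derivative, and if $(c(z),z)\in Z_k$ for a sequence $z\to z_0$ we would get $Z_v$ and $Z_k$ sharing infinitely many points, hence $Z_k=Z_v$ locally, contradicting the fact (provable as in Remark (1)) that on $Z_k$ one has $f_c^{\circ mk}(z)-z=0$ while generically on $Z_v$ this fails. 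Alternatively and more cleanly: the multiplier $[f^{\circ mv}_{c(z)}]'(z)$ is holomorphic in $z$, equals $\rho^v=1$ at $z_0$ but is not $\equiv 1$ (by the same resultant argument as the footnote in Case 2, since parabolic parameters of bounded period are isolated), so after shrinking to $\varepsilon_1$ it differs from $1$ for $0<|z-z_0|<\varepsilon_1$; this already gives multiplier $\neq 1$, which is half of the lemma.

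For the period assertion I would then argue: on $0<|z-z_0|<\varepsilon_1$ the exact period $\mu(z)$ of $z$ for $f_{c(z)}$ divides $mv$, does not divide $m$, and — after possibly shrinking once more — cannot equal $mk$ for any divisor $k$ of $v$ with $1<k<v$. Indeed for each such $k$ the set $\{z: f^{\circ mk}_{c(z)}(z)=z\}$ is, by the same reasoning, either all of the punctured disk (impossible, as it would force $Z_v$ locally into $Z_k$ and contradict $\mu\nmid m$ combined with the structure) or a discrete set accumulating only possibly at $z_0$; taking $\varepsilon_1$ smaller than the modulus of the nearest such point over all the finitely many divisors $k$ finishes it. Hence $\mu(z)=mv$ throughout $0<|z-z_0|<\varepsilon_1$, which is the Lemma. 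I expect the main obstacle to be the bookkeeping that eliminates the intermediate periods $mk$ cleanly: one must be careful that the divisibility $f_c^{\circ mk}(z)-z \mid f_c^{\circ mv}(z)-z$ and $f_c^{\circ m}(z)-z\mid f_c^{\circ mk}(z)-z$ genuinely hold (they do, by the outside-the-Mandelbrot-set argument used to produce $P$), and that the resulting curves $Z_k$ really are cut out near $(c_0,z_0)$ by the Case 3 claim so that two distinct ones cannot share infinitely many points — the alternative multiplier argument via the footnote-style resultant is the safest route and I would lead with it.
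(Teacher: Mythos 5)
Your core approach agrees with the paper's: the paper proves Lemma~\ref{111} by invoking Lemma~\ref{near parabolic} (Milnor's splitting result, extended to degree $d$) to conclude that near the satellite parabolic parameter $c_0$ the parabolic orbit splits into exactly two orbits, of periods $m$ and $mv$; combined with Remark~(1), which excludes period $m$, the period must be $mv$. The paper then gets multiplier $\neq 1$ from finiteness of parabolic parameters of bounded period, which is the same resultant-style argument you lead with. Your second paragraph captures this.

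But your third paragraph, offering a divisibility-based backup, has two gaps. First, $\mu\mid mv$ and $\mu\nmid m$ do not imply $\mu=mk$ for some $k\mid v$: you would need $m\mid\mu$, and this requires a separate argument (e.g.\ that the orbit of $z$ must converge to the period-$m$ orbit of $z_0$ as $z\to z_0$, so its cardinality is a multiple of $m$); you state it as automatic. Second, and more importantly, the curves $Z_k$ for proper divisors $1<k<v$ are \emph{not} "cut out near $(c_0,z_0)$ by the Case 3 claim": since $\rho^k\neq 1$ for such $k$, the point $z_0$ is a \emph{simple} root of both $f_{c_0}^{\circ m}(z)-z$ and $f_{c_0}^{\circ mk}(z)-z$, so $P_k(c_0,z_0)\neq 0$ and $(c_0,z_0)\notin Z_k$. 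The Case~3 analysis therefore does not apply to $Z_k$; rather, the $Z_k$'s are simply bounded away from $(c_0,z_0)$ and one shrinks past them. This is in fact an easier conclusion than the one you sketch, but it is not what you wrote, and in any case it is subsumed by the direct appeal to Lemma~\ref{near parabolic}, which already tells you the only possible nearby periods are $m$ and $mv$. If you keep the backup argument you should repair both points; otherwise, simply lean on the near-parabolic lemma as you do in paragraph two, which is exactly the paper's proof.
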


\begin{proof}
Note that $P(c(z),z)=0$ implies $z$ is periodic point of $f_{c(z)}$ with period less than $ms$.
As $(f^m_{c_0})^\prime(z_0)=\rho=e^{2\pi iu/v}$, by lemma \ref{near parabolic} below, when $c$ is close enough to $c_0$, the orbit of $f_{c_0}$ containing $z_0$ splits into two periodic orbit
of $f_c$ with period $m$ and $mv$. Then we can choose $\varepsilon_1<\varepsilon_0$ such that $z$ belongs to one of the
two splitted orbits of $f_{c(z)}$ for $0<|z-z_0|<\varepsilon_1$. By remark (1), the period of $z$ under $f_{c(z)}$ must
be $mv$. The parabolic parameter in $M_d$ with period of parabolic point less than a fixed number are finite, so we can decrease $\varepsilon_1$ if necessary, such that $c(z)$ is not parabolic parameter for $0<|z-z_0|<\varepsilon_1$.
\end{proof}

Now let $V_1$ be a neighborhood of $(c_0,z_0)$ in $\C^2$ with property that \[V_1\cap Z_s=V_1\cap Z_v=\{(c(z),z) | |z-z_0|<\varepsilon_1\}.\]

If $n=mv$, by lemma \ref{111} and remark (2), we have\[(V_1\cap Z_v)\setminus\{(c_0,z_0)\}\subset V_1\cap X_n\subset V_1\cap(Y_n\setminus Y_m)=(V_1\cap Z_v)\setminus\{(c_0,z_0)\}.\] It follows $(c_0,z_0)\in\partial X_n$ and $\overline{X_n}$ coincides with $Z_v$ at neighborhood of $(c_0,z_0)$. Then $\overline{X_n}$ is smooth at point $(c_0,z_0)$

If $n=mvk$ for some $k>1$
\[(V_1\cap Z_s)\setminus\{(c_0,z_0)\}\subset V_1\cap(Y_n\setminus Y_m)=(V_1\cap Z_s)\setminus\{(c_0,z_0)\}.\] Then $V_1\cap Z_s$ is the neighborhood of $(c_0,z_0)$ in $(Y_n\setminus Y_m)\cup\{(c_0,z_0)\}$. For $X_n\subset Y_n\setminus Y_m$ and $X_n\cap (V_1\cap Z_s)=\emptyset$, we have $(c_0,z_0)\notin \overline{X_n}$
 \qed

\subsection{Quadratic differentials with double poles}

Set $f:=f_{c_0}$,
\[z_k\eqdef f^k(z_0),\quad
\delta_k\eqdef dz_k^{d-1}=f'(z_k),\quad \zeta_k(c)\eqdef f_c^{\circ
k}\bigl(\zeta(c)\bigr)\quad \text{and}\quad \dot\zeta_k\eqdef
\zeta_k'(c_0).\] Then
\[\zeta_{k+1}(c) = f_c\bigl(\zeta_k(c)\bigr)\quad \text{and}\quad \zeta_m=\zeta_0.\]
Since \[\delta_0\delta_1\cdots\delta_{m-1}= \rho\neq 0,\]
there is a unique $ m$-tuple $ (\mu_0, \ldots, \mu_{m-1}) $ such that
\[\mu_{k+1}= \frac{\mu_k}{dz_k^{d-1}} - \frac{d-1}{dz_k^d},\]
where the indices are considered  to be modulo $ m$.

Now consider the quadratic differential $ \Q $ (with double poles) defined by
\[\Q\eqdef  \sum_{k=0}^{m-1} \left(\frac{1}{(z-z_k)^2} + \frac{\mu_k}{z-z_k}\right)\dz^2.\]

\begin{lemma} [Compare with \cite{LG}]
We have
\[f_*\Q = \Q - \frac{\dot\rho}{\rho} \cdot \frac{\dz^2}{z-c_0}.\]
\end{lemma}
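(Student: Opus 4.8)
The plan is to compute $f_*\Q$ term by term using the pushforward formulas in Lemma \ref{lemme_fc}, then reorganise the result and match it against $\Q$. Write $\Q = \sum_{k=0}^{m-1}\left(\frac{1}{(z-z_k)^2}+\frac{\mu_k}{z-z_k}\right)\dz^2$. For each $k$ we have $z_k\neq 0$ (since $z_k$ lies on a periodic cycle of period $m$, and if some $z_k=0$ then the cycle passes through $0$, forcing multiplier $\rho=0$, contradicting $\rho\neq 0$), so the relevant cases of Lemma \ref{lemme_fc} apply with $a=z_k$, $f(a)=z_{k+1}$, $f'(a)=\delta_k=dz_k^{d-1}$. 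The double-pole term at $z_k$ pushes forward to $\frac{\dz^2}{(z-z_{k+1})^2} - \frac{d-1}{z_k\delta_k}\left(\frac{\dz^2}{z-z_{k+1}}-\frac{\dz^2}{z-c_0}\right)$, and the simple-pole term $\frac{\mu_k}{z-z_k}$ pushes forward to $\frac{\mu_k}{\delta_k}\left(\frac{\dz^2}{z-z_{k+1}}-\frac{\dz^2}{z-c_0}\right)$. Note $\frac{d-1}{z_k\delta_k}=\frac{d-1}{dz_k^d}$.

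Next I would sum over $k$ from $0$ to $m-1$. The double-pole parts $\sum_k \frac{\dz^2}{(z-z_{k+1})^2}$ reindex to $\sum_k \frac{\dz^2}{(z-z_k)^2}$ since the cycle is periodic mod $m$, recovering the double-pole part of $\Q$ exactly. The simple-pole parts at the points $z_{k+1}$ combine with coefficient $\frac{\mu_k}{\delta_k}-\frac{d-1}{dz_k^d}$; by the defining recursion $\mu_{k+1}=\frac{\mu_k}{dz_k^{d-1}}-\frac{d-1}{dz_k^d}=\frac{\mu_k}{\delta_k}-\frac{d-1}{dz_k^d}$, this coefficient is precisely $\mu_{k+1}$, and after reindexing we recover $\sum_k \frac{\mu_k}{z-z_k}\dz^2$, the simple-pole part of $\Q$. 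The only surviving extra term is the multiple of $\frac{\dz^2}{z-c_0}$, namely $\left(\sum_{k=0}^{m-1}\left(\frac{d-1}{dz_k^d}-\frac{\mu_k}{\delta_k}\right)\right)\frac{\dz^2}{z-c_0} = -\left(\sum_{k=0}^{m-1}\mu_{k+1}\right)\frac{\dz^2}{z-c_0}$, equivalently $-\bigl(\sum_k \mu_k\bigr)\frac{\dz^2}{z-c_0}$. So it remains to identify $\sum_{k=0}^{m-1}\mu_k$ with $\dot\rho/\rho$.

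For that last identification I would differentiate the multiplier relation. From $\zeta_{k+1}(c)=f_c(\zeta_k(c))$ and $\rho_c=\prod_{k=0}^{m-1}f_c'(\zeta_k(c))=\prod_k d\,\zeta_k(c)^{d-1}$, logarithmic differentiation at $c_0$ gives $\frac{\dot\rho}{\rho}=\sum_{k=0}^{m-1}(d-1)\frac{\dot\zeta_k}{z_k}$, where $\dot\zeta_k=\zeta_k'(c_0)$. On the other hand, differentiating $\zeta_{k+1}(c)=\zeta_k(c)^d+c$ at $c_0$ yields $\dot\zeta_{k+1}=\delta_k\dot\zeta_k+1$, i.e. $\dot\zeta_{k+1}=dz_k^{d-1}\dot\zeta_k+1$. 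One then checks that the linear functional $L=\sum_k(d-1)\dot\zeta_k/z_k$ and the quantity $\sum_k\mu_k$ both satisfy the same constraints: the recursion for $(\mu_k)$ is, up to the affine structure, dual to the recursion for $(\dot\zeta_k)$, and a short computation (multiply $\mu_{k+1}=\frac{\mu_k}{dz_k^{d-1}}-\frac{d-1}{dz_k^d}$ through appropriately, or pair the two recursions via the telescoping sum $\sum_k(\mu_{k+1}\dot\zeta_{k+1}-\mu_k\dot\zeta_k)=0$) produces $\sum_k\mu_k = \sum_k(d-1)\dot\zeta_k/z_k = \dot\rho/\rho$. I expect this pairing/telescoping identity to be the only genuinely delicate point; the pushforward bookkeeping in the first two paragraphs is routine once the $z_k\neq 0$ observation and the reindexing are in place.
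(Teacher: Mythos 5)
Your proposal is correct and follows essentially the same route as the paper: both proofs do the term-by-term pushforward, use the defining recursion $\mu_{k+1}=\mu_k/(dz_k^{d-1})-(d-1)/(dz_k^d)$ to recognise that the poles along the cycle reproduce $\Q$ exactly, isolate the leftover coefficient $-\sum_k\mu_{k+1}$ at $c_0$, and then close the gap via the telescoping identity $\sum_k(\mu_{k+1}\dot\zeta_{k+1}-\mu_k\dot\zeta_k)=0$ combined with the recursion $\dot\zeta_{k+1}=\delta_k\dot\zeta_k+1$ and logarithmic differentiation of $\rho_c=\prod_k d\,\zeta_k(c)^{d-1}$. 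The only stylistic difference is that you verify the cancellation of the cycle poles explicitly where the paper simply appeals to "by construction of $\Q$", and you flag the (true, useful) fact that $z_k\neq 0$ so Lemma \ref{lemme_fc} applies; the substance is identical.
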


\begin{proof}
By construction of $\Q$ and  the calculation of $f_*\Q$ in Lemma \ref{lemme_fc},  the polar parts of
$ \Q $ and $f_*\Q$ along the cycle of $z_0$ are identical.  But
$f_*\Q$ has  an extra simple pole at the critical value $c_0$ with coefficient
\[\sum_{k=0}^{m-1}\left(-\frac{\mu_k}{dz_k^{d-1}} + \frac{d-1}{dz_k^d} \right)=-\sum_{k=0}^{m-1}\mu_{k+1}.\]
We need to show that this coefficient is equal to $-\frac{\dot \rho}{\rho}$.

Using $\zeta_{k+1}(c)=\zeta_k(c)^d+c$, we get
\[\dot \zeta_{k+1} = dz_k^{d-1}\dot \zeta_k+1.\]
It follows that
\[\dot\zeta_{k+1}\mu_{k+1}-\mu_{k+1} = dz_k^{d-1} \dot \zeta_k \mu_{k+1}= \dot \zeta_k \mu_k - \frac{(d-1)\dot \zeta_k}{z_k}.\]
Therefore
\[ \sum_{k=0}^{m-1} \mu_{k+1} = \sum_{k=0}^{m-1}\left(
\dot \zeta_{k+1}\mu_{k+1}-\dot \zeta_k \mu_k+ \frac{(d-1)\dot
\zeta_k}{z_k}\right) = (d-1) \sum_{k=0}^{m-1} \frac{\dot
\zeta_k}{z_k} = \frac{\dot \rho}{\rho},\] where last equality is
obtained by evaluating at $ c_0 $ of the logarithmic derivative of
\[\rho_c\eqdef  \prod_{k=0}^{m-1}d\zeta_k^{d-1}(c).\qedhere\]
\end{proof}

\begin{lemma} [Epstein\cite{ep}]
We have $ f_* \Q \neq \Q $.
\end{lemma}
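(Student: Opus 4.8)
The plan is to apply the contraction principle (Corollary \ref{qneqq coro_f}) to the quadratic differential $\Q$ with double poles constructed above. Since $\Q\in\Q(\C)$ — it has only simple and double poles at the finite points $z_0,\dots,z_{m-1}$ and no pole at infinity — we know unconditionally that $f_*\Q\neq\Q$ for the polynomial $f=f_{c_0}$. The only subtlety is that Corollary \ref{qneqq coro_f} is stated for elements of $\Q(\C)$, i.e.\ quadratic differentials whose poles are at worst \emph{simple}, whereas our $\Q$ has genuine double poles. So the first step is to reduce to that case.

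First I would invoke the previous lemma, which gives the exact identity
\[
f_*\Q=\Q-\frac{\dot\rho}{\rho}\cdot\frac{\dz^2}{z-c_0}.
\]
From this the conclusion is immediate once we know $\dot\rho\neq 0$: if $f_*\Q=\Q$ then $\frac{\dot\rho}{\rho}\cdot\frac{\dz^2}{z-c_0}=0$, forcing $\dot\rho=0$. But proving $\dot\rho\neq 0$ is precisely Step 2 of the previous section, which is \emph{what this lemma is meant to establish} — so that route is circular and must be avoided. Instead I would argue directly that $f_*\Q\neq\Q$ without any information on $\dot\rho$, and then read off $\dot\rho\neq0$ as a consequence.

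The key step is therefore an independent proof that $f_*\Q\neq\Q$. Here I would compare the norms of $\Q$ and $f_*\Q$ on a large round disk, exactly as in Lemma \ref{lemme_thurstonpoly}, but being careful near the double poles. Fix $R$ large enough that $U:=f^{-1}(V)\Subset V$ with $V=\{|z|<R\}$, and excise small disks $D_k$ of radius $\eps$ around each $z_k$ (and their preimages) so that the norms become finite. On $V\smm\bigcup_k D_k$ the same chain of inequalities used in Lemma \ref{lemme_thurstonpoly} gives $\|f_*\Q\|\le\|\Q\|_{U}\le\|\Q\|$, with the strict inequality on the right coming from the relatively compact containment $U\Subset V$ on the part away from the poles. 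The point is that the contribution of the excised neighborhoods is controlled: the double-pole term contributes a quantity of order $\log(1/\eps)$ to \emph{both} sides, and the relevant preimage disks around the non-critical preimages of the $z_k$ are strictly interior to $V$, so letting $\eps\to 0$ the strict inequality survives. Hence $\|f_*\Q\|_V<\|\Q\|_V$ for $R$ large, which already precludes $f_*\Q=\Q$.

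The main obstacle is handling the double poles cleanly in the norm comparison: unlike the simple-pole case the local $L^1$ norm near $z_k$ is $+\infty$, so one genuinely has to cut out neighborhoods and track how the cut-out pieces transform under pushforward, checking that no divergent term can conspire to make the two norms equal. This is the technical heart of Epstein's argument. An alternative, which I would mention as a shortcut, is to note that $f_*\Q=\Q$ for a polynomial (hence a rational map of $\P$ ramified over $\infty$) would by the Thurston--Epstein rigidity statement (the Remark after Corollary \ref{qneqq coro_f}) force $f_{c_0}$ to be a Lattès example, which is impossible since $f_{c_0}$ is a polynomial; this gives $f_*\Q\neq\Q$ immediately and, via the displayed identity, $\dot\rho\neq0$.
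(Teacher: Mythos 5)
Your high-level framing is correct, and the approach matches the paper's: you correctly note that deducing $f_*\Q\neq\Q$ from the displayed identity $f_*\Q=\Q-\tfrac{\dot\rho}{\rho}\,\tfrac{\dz^2}{z-c_0}$ would be circular, and you correctly propose instead a direct norm comparison with excised neighborhoods, which is exactly Epstein's argument. But two genuine gaps remain, and one of your two routes is a dead end.

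First, the main argument. Saying ``the double-pole term contributes $\log(1/\eps)$ to both sides and the strict inequality survives'' is not a proof; you cannot subtract two quantities that diverge. The paper does something more precise: with $V_\eps=\bigcup_{k=1}^{m}f^k\bigl(D(z_0,\eps)\bigr)$ and $U_\eps=f^{-1}(V_\eps)$, one has an exact set-theoretic decomposition
\[
\|\Q\|_{U\smallsetminus U_\eps}=\|\Q\|_{V\smallsetminus V_\eps}-\|\Q\|_{V\smallsetminus U}+\|\Q\|_{V_\eps\smallsetminus U_\eps}-\|\Q\|_{U_\eps\smallsetminus V_\eps},
\]
in which every term is \emph{finite}. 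Combined with $\|f_*\Q\|_{V\smallsetminus V_\eps}\le\|\Q\|_{U\smallsetminus U_\eps}$, the assumption $f_*\Q=\Q$ forces $0<\|\Q\|_{V\smallsetminus U}\le\|\Q\|_{V_\eps\smallsetminus U_\eps}$. The entire weight of the proof then rests on showing $\|\Q\|_{V_\eps\smallsetminus U_\eps}\to 0$, and \emph{this is where the hypothesis $|\rho|=1$ enters and is indispensable}: because the multiplier has modulus one, the set $V_\eps\smallsetminus U_\eps$ near each $z_k$ is pinched into a round annulus with radii ratio $\eps'/\eps\to 1$, so the integral of $|q|\sim r^{-2}$ over it is $\sim 2\pi\log(\eps'/\eps)\to 0$. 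If $|\rho|$ were, say, larger than $1$, that ratio would stay bounded away from $1$ and the whole argument would collapse. Your write-up never identifies this as the mechanism; the remark about ``preimage disks around the non-critical preimages being strictly interior to $V$'' is beside the point, since $\Q$ has no poles at those preimages and they cause no trouble. So the core idea is right, but the actual cancellation identity and the role of the neutral multiplier — the two things that make the proof go — are missing.

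Second, the proposed shortcut via the Latt\`es-rigidity remark is not available. The remark after Corollary \ref{qneqq coro_f} refers to Thurston's contraction principle, which as stated and proved (Lemma \ref{lemme_thurstonpoly}) applies to differentials in $\Q(\C)$, i.e.\ with at worst simple poles, where the $L^1$ norm is finite. The extension to the non-integrable, double-pole case is \emph{exactly what this lemma of Epstein establishes}; appealing to the general rigidity statement here would presuppose the result you are asked to prove.
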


\begin{proof}
The proof rests again on the  contraction principle, but we can not apply directly Lemma \ref{lemme_thurstonpoly} since $ \Q $ is not integrable near the cycle $\left<z_0,\ldots,z_{m-1}\right>$.
Consider a sufficiently large round disk $ V $ so  that $U\eqdef f^{-1}(V)$ is relatively compact in $ V $. Given $ \eps> 0 $, we set
\[V_\eps\eqdef   \bigcup_{k=1}^{m} f^k\bigl(D(z_0,\eps)\bigr)\quad \text{and}\quad U_\eps\eqdef f^{-1}(V_\eps).\]
When $ \eps $ tends to $ 0$, we have
\[\|f_* \Q\|_{V-V_\eps}\leq \|\Q\|_{U-U_\eps} = \|\Q\|_{V-V_\eps} -\|\Q\|_{V-U} + \|\Q\|_{V_\eps-U_\eps} - \|\Q\|_{U_\eps-V_\eps}.\]
If we had $ f_* \Q = \Q $, we would have
\[0< \|\Q\|_{V-U} \leq  \|\Q\|_{V_\eps-U_\eps}.\]
However, $\|\Q\|_{V_\eps-U_\eps}$ tends to $0$ as $\eps$ tends to $0$, which is a contradiction. Indeed, $\Q=q(z)dz^2$, the meromorphic function $q$ is equivalent to 
$\dfrac{1}{(z-z_0)}$ as $z$ tends to $z_0$. In addition, since the multiplier of $z_0$ has modulus $1$,
\[D(z_0,\eps)\subset U_{\eps}-V_{\eps}\subset D(z_0,\eps^\prime)\quad\text{with}\quad \frac{\eps^\prime}{\eps}\overset{\eps\rightarrow0}{\longrightarrow}1.\]
Therefor,
\[\|\Q\|_{V_\eps-U_\eps}\leq\int_{0}^{2\pi}\int_{\eps}^{\eps^\prime}\frac{1+o(1)}{r^2}rdrd\theta=2\pi(1+o(1))\log\frac{\eps^\prime}{\eps}\overset{\eps\rightarrow0}{\longrightarrow}0\]
\end{proof}

The fact $\dot \rho\ne 0$ follows from the above two lemmas.

\section{The irreducibility of the periodic curves}

Recall that $f_c$ denote the polynomial $z\mapsto z^d+c$, where
$d\geq2$, and we have defined
 $$X_{n}:=\bigl\{(c,z)\in \C^{2}\mid f^n_{c}(z)=z,\ [f^n_{c}]'(z)\ne 1 \text{ and for all } 0<m< n,\ \ f^m_c(z)\ne z\bigr\}.$$

The objective here is to prove:

\REFTHM{main} For every $n\geq 1$, the set $X_n$ is connected.\ENDTHM

It follows immediately that the closure of $X_n$ in $\C^2$ is irreducible.

\subsection{Kneading sequences}

Set $\T=\R/\Z$ and let $\tau:\T\to \T$ be the angle map
\[\tau: \T\ni \theta\mapsto d\theta\in  \T, \, d\geq2.\]
We shall often make the confusion between an angle $\theta\in \T$
and its representative in $[0,1[$. In particular, the angle
$\theta/d\in \T$ is the element of $\tau^{-1}(\theta)$ with
representative in $[0,1/d[$ and the angle $(\theta+(d-1))/d$ is the
element of $\tau^{-1}(\theta)$ with representative in
$[{(d-1)}/d,1[$.

Every angle $\theta\in \T$ has an associated kneading sequence
$\nu(\theta)=\nu_1\nu_2\nu_3\ldots$ defined by
$$\nu_k=\left\{\begin{array}{ll} 1 & \text{if } \tau^{
k-1}(\theta)\in \ \left]\dfrac \theta d,\dfrac{\theta+1}d\right[,\\
2 & \text{if } \tau^{
k-1}(\theta)\in \ \left]\dfrac {\theta+1} d,\dfrac{\theta+2}d\right[,\\
.\\
.\\
.\\
d-1 & \text{if } \tau^{
k-1}(\theta)\in \ \left]\dfrac {\theta+(d-2)} d,\dfrac{\theta+(d-1)}d\right[,\\
0 & \text{if } \tau^{ k-1}(\theta)\in \T\smm\left[\dfrac \theta
d,\dfrac{\theta+(d-1)}d\right],
\\ \star & \text{if } \tau^{ k-1}(\theta)\in \left\{\dfrac \theta d,\dfrac{\theta+1}d,...,\dfrac{\theta+(d-2)}d,\dfrac{\theta+(d-1)}d\right\}.\end{array}\right. $$
For example,

\begin{itemize}
\item as $d=3$, $\nu(\dfrac17)=\overline{12102\star}$ \quad\text{and}\quad $\nu(\dfrac{27}{28})=\overline{22200\star}$;
\end{itemize}

\begin{figure}[htbp]

\centering

\includegraphics[width=11.5cm]{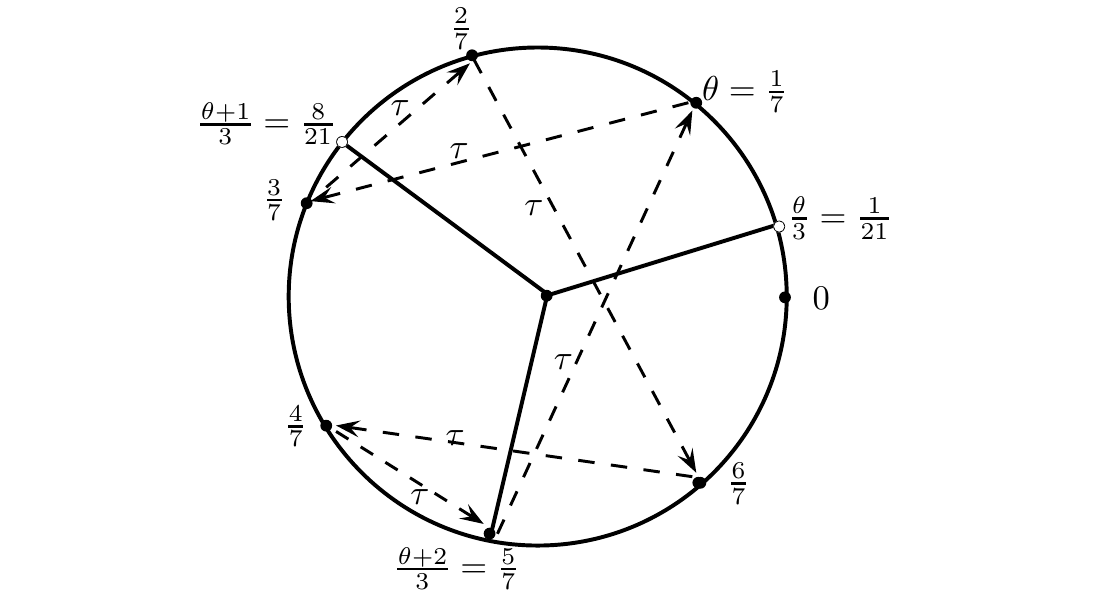}

\caption{As $d=3$, the kneading sequence of $\theta=1/7$ is $\nu(1/7)=\overline{12102\star}$} \label{fig1}

\end{figure}

We shall say that an angle $\theta\in \T$, periodic under $\tau$, is
{\em maximal in its orbit} if its representative in $[0,1)$ is
maximal among the representatives of $\tau^j(\theta)$ in $[0,1)$ for
all $j\geq 1$. If the period is $n$ and the d-expansion ($d\geq2$)
of $\theta$ is $.\overline{\ep_{1}\ldots\ep_{n}}$, then $\theta$ is
maximal in its orbit if and only if the periodic sequence
$\overline{\ep_{1}\ldots\ep_{n}}$ is maximal (in the lexicographic
order) among its shifts. For example, as $d=4$,
$\dfrac{5}{31}=.\overline{02211}$ is not maximal in its orbit but
$\dfrac{20}{31}=.\overline{22110}$ is maximal in the same orbit.

The following lemma indicates cases where the $d-$expansion
$(d\geq2)$ and the kneading sequence coincide.

\begin{lemma}[Realization of kneading sequences]\label{key}
Let $\theta\in \T$ be a periodic angle which is maximal in its orbit
and let $.\overline{\ep_{1}\ldots\ep_{n}}$ be its d-expansion
$(d\geq2)$. Then, $\ep_n\in\{0,1,2,\ldots,d-2\}$ and the kneading
sequence $\nu(\theta)$ is equal to
$\overline{\ep_{1}\ldots\ep_{n-1}\star}$.
\end{lemma}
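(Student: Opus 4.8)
The plan is to unwind the definition of the kneading sequence directly and use maximality to pin down, for each $k$, which of the $d$ half-open arcs $\left]\tfrac{\theta+(j-1)}{d},\tfrac{\theta+j}{d}\right[$ (or the complementary arc, or the finite set of endpoints) contains $\tau^{k-1}(\theta)$. First I would observe that since $\theta$ is periodic of period $n$ with $d$-expansion $.\overline{\ep_1\ldots\ep_n}$, the orbit point $\tau^{k-1}(\theta)$ has $d$-expansion $.\overline{\ep_k\ep_{k+1}\ldots\ep_{k+n-1}}$ (indices mod $n$), so its leading digit is $\ep_k$. The arc $\left]\tfrac{\theta+(j-1)}{d},\tfrac{\theta+j}{d}\right[$ is, up to its two endpoints, exactly the set of angles whose leading $d$-ary digit is $j-1$ \emph{and} whose subsequent tail lies strictly between the tail of $\theta$ and... — more precisely, $\phi$ lies in the $(j-1)$-st arc iff writing $\phi = \tfrac{(j-1)+\psi}{d}$ with $\psi\in[0,1)$ one has $\psi\in\,]\theta,\theta[$ cyclically, i.e. essentially $\psi$ compared to $\theta$. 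The cleanest route: $\tau^{k-1}(\theta)\in\left]\tfrac{\theta+(j-1)}{d},\tfrac{\theta+j}{d}\right[$ iff $\ep_k=j-1$ and the shifted sequence $\overline{\ep_k\ldots}$ satisfies $\ep_{k+1}\ep_{k+2}\ldots$ lies strictly between $\ep_1\ep_2\ldots$ — but maximality of $\theta$ forces $\overline{\ep_k\ldots}\le\overline{\ep_1\ldots}$ for all $k$, which will make all these ``betweenness'' conditions collapse into equalities on the digit.

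The key steps, in order, would be: (1) translate ``$\tau^{k-1}(\theta)$ lies in the $j$-th arc / the outer arc / an endpoint'' into a statement purely about the lexicographic position of the shift $\sigma^{k-1}\overline{\ep_1\ldots\ep_n}$ relative to $\overline{\ep_1\ldots\ep_n}$ and its leading digit; (2) for $1\le k\le n-1$, use maximality to show $\tau^{k-1}(\theta)$ lands in the interior of the $\ep_k$-th arc, hence $\nu_k=\ep_k$ — here the crucial point is that the shift $\sigma^{k-1}$ of a maximal periodic sequence is $\le$ the original, and it is a genuine shift (not equal to $\theta$ itself, since $k-1\not\equiv 0$), so strict inequality holds at the first differing digit and we stay off the arc boundary; (3) for $k=n$ (equivalently $k\equiv 0$), $\tau^{n-1}(\theta)$ is the immediate $\tau$-preimage of $\theta$ inside the orbit, i.e. $\tau^{n-1}(\theta)=\tfrac{\theta+\ep_n}{d}$ exactly, which is one of the listed endpoints $\left\{\tfrac{\theta}{d},\ldots,\tfrac{\theta+(d-1)}{d}\right\}$, forcing $\nu_n=\star$; (4) finally, to get $\ep_n\in\{0,1,\ldots,d-2\}$, note $\tau^{n-1}(\theta)=\tfrac{\theta+\ep_n}{d}$ must be $\le\theta$ (maximality, again using that this is a nontrivial shift), and if $\ep_n=d-1$ then $\tfrac{\theta+(d-1)}{d}>\theta$ whenever $\theta<1$, a contradiction — alternatively argue that $\ep_n=d-1$ would make $\overline{\ep_n\ep_1\ldots\ep_{n-1}}$ start with the maximal digit, contradicting maximality of $\overline{\ep_1\ldots\ep_n}$ unless all digits equal $d-1$, i.e. $\theta=0$, which is not periodic of positive period. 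Then periodicity of the kneading sequence with period $n$ gives $\nu(\theta)=\overline{\ep_1\ldots\ep_{n-1}\star}$.

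I expect the main obstacle to be step (2): carefully verifying that for $1\le k\le n-1$ the point $\tau^{k-1}(\theta)$ does not land on any of the arc endpoints $\tfrac{\theta+j}{d}$, and that among all the defining arcs it is the interior of precisely the $\ep_k$-th one. This requires knowing that $\tau^{k-1}(\theta)=\tfrac{\ep_k+\psi_k}{d}$ with $\psi_k=\tau^{k}(\theta)/\,$(shift) and then comparing $\psi_k$ with $\theta$ via the lexicographic order on $d$-adic expansions; maximality gives $\psi_k$-tail $\le\theta$-tail, and the inequality is strict because equality $\tau^{k-1}(\theta)=\tfrac{\theta+j}{d}$ would force $\tau^k(\theta)=\theta$, i.e. period dividing $k<n$, contradicting minimality of the period $n$. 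Once the off-the-boundary claim is secured, matching the arc index to $\ep_k$ is immediate from the leading digit. Everything else is bookkeeping with $d$-adic expansions and the shift map, and can be made uniform in $d\ge2$ without case analysis on the individual digit values.
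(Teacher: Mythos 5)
Your proposal is correct and takes essentially the same approach as the paper: maximality places each $\tau^{k-1}(\theta)$ ($1\le k\le n-1$) in the interior of the kneading arc indexed by $\ep_k$ (with period-minimality ruling out boundary hits, exactly as you argue), while $\tau^{n-1}(\theta)=(\theta+\ep_n)/d$ gives $\nu_n=\star$ and $\ep_n\le d-2$. One minor slip in your step (4): $\theta=0$ \emph{is} periodic of period $1$ under $\tau$; what actually excludes the all-$(d-1)$ string is that the standard $d$-expansion of $0$ is $.\overline{0}$, not $.\overline{(d-1)}$, so that case simply never arises.
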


For example,

\begin{itemize}

\item as $d=3$
\qquad\qquad$\dfrac{13}{14}=.\overline{221001}\quad\text{and}\quad \nu(\theta)=\overline{22100\star}.$
 \item as $d=4$
\qquad\qquad$\dfrac{28}{31}=.\overline{32130}\quad\text{and}\quad \nu(\theta)=\overline{3213\star}.$

\end{itemize}

\begin{figure}[htbp]

\centering

\includegraphics[width=10.0cm]{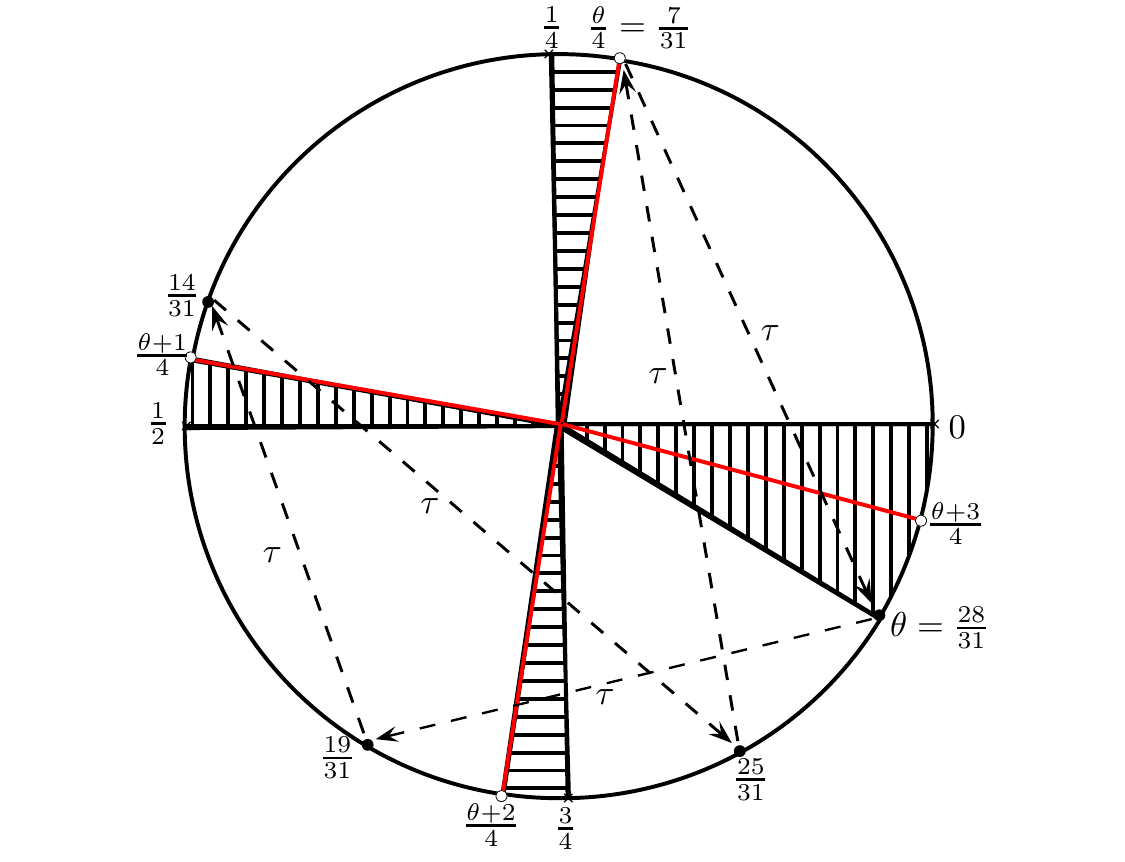}

\caption{As $d=4$, the kneading sequence of $\theta=28/31$ is $\nu(28/31)=\overline{3213\star}$} \label{fig2}

\end{figure}

\begin{proof} Since
$\theta$ is maximal in its orbit under $\tau$, the orbit of $\theta$
is disjoint from $\big]\dfrac{\theta}{d},\dfrac1d\big]\bigcup
 \big]\dfrac{\theta+1}{d},\dfrac2d\big]\\
 \bigcup... \bigcup \big]\dfrac{\theta+(d-2)}{d},\dfrac{d-1}d \big]\bigcup
\big]\theta,1\big]$. It follows that the orbit $\tau^{
j}(\theta)$, $j=0,1,\ldots,n-2$ have the same itinerary relative to
the two partitions
$\T-\big\{0,\dfrac1d,\dfrac2d,\ldots,\dfrac{d-2}d,\dfrac{d-1}d\big\}$
and $\T-\big\{\dfrac{\theta}d,
\dfrac{\theta+1}d,\ldots, \dfrac{\theta+(d-2)}d,\dfrac{\theta+(d-1)}d\big\}$ (see Figure \ref{fig2}).
The first one gives the d-expansion ($d\geq2$) whereas the second
gives the kneading sequence. Therefore, the kneading sequence of
$\theta$ is $\overline{\ep_{1}\ldots\ep_{n-1}\star}$. Since
$\tau^{n-1}(\theta)\in \tau^{-1}(\theta)=\{\dfrac \theta d,
\dfrac{\theta+1}d,\ldots,\dfrac{\theta+(d-1)}d\}$
and since $\dfrac{\theta+(d-1)}{d}\in \ ]\theta,1]$, we must have
$\tau^{n-1}(\theta)=\big\{\dfrac \theta d, \dfrac {\theta+1}
d,\ldots,\dfrac {\theta+(d-2)} d \big\} < \dfrac{d-1}d$. So $\ep_n$,
as the first digit of $\tau^{n-1}(\theta)$, must be in $\{0,1,
2,\ldots,d-2\}$.
\end{proof}

\subsection{Cyclic expression of kneading sequence}

 $X=\{0,1,\ldots, d-1\}(d\geq 2)$ is an alphabet. $X^\star$ is the set of all sequence of symbols from $X$ with finite
 length, that is, \[X^\star=\{\nu_1\ldots\nu_t| \nu_i\in X,t\in\N^\star \}.\] The element of $X^\star$ is called word, its length is denoted by $|\cdot|$. For any $w\in X^\star$, $w$ can be written as $u^n:=\underbrace{u\ldots u}_n$ with
 $u\in X^\star$ and $n\geq 1$.

 For example: \qquad$121212=12^3,\quad 1234=1234$.

 \begin{definition}
 A word is called primitive if it is not the form $u^n$ for any $n>1, u\in X^\star$.
 \end{definition}

The following lemma is a basic result about primitive words due to F.W.Levi. One can refer to \cite{KM} for the proof.

\begin{lemma}[F.W.Levi]\label{primitive root1}
For each $w\in X^\star$, there exists an unique primitive word $a(w)$ such that $w=a(w)^n$ for some $n\geq1$.
\end{lemma}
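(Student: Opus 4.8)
\textbf{Plan for the proof of Lemma \ref{primitive root1}.}
The plan is to prove existence first and then uniqueness, both by elementary induction on the word length $|w|$. For existence, I would argue by strong induction: if $w$ is itself primitive, take $a(w)=w$ and $n=1$; otherwise $w=u^m$ for some $u\in X^\star$ with $m>1$, hence $|u|<|w|$, so by the induction hypothesis $u=a(u)^k$ for a primitive $a(u)$, and then $w=a(u)^{km}$ exhibits the desired primitive root. The base case $|w|=1$ is trivial since a single letter is primitive.

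For uniqueness, suppose $w=p^r=q^s$ with $p,q$ both primitive and $r,s\ge 1$. I would first dispose of the trivial cases $r=1$ or $s=1$: if $r=1$ then $w=p$ is primitive, so $q^s$ primitive forces $s=1$ and $p=q$. So assume $r,s\ge 2$. Comparing lengths, $|p|\,r=|q|\,s$; without loss of generality $|p|\le |q|$, so $|p|$ divides $|q|\cdot s/r$ and in any case $|p|\le |q| < |w|$. The key point is a standard commutation fact for words: since $p^r$ and $q^s$ are equal and both have length at least $2\max(|p|,|q|)$, the prefix of length $|p|+|q|$ of $w$ can be read two ways, which forces $pq=qp$ (the Fine--Wilf / Lyndon--Schützenberger style argument), and two words commute iff they are powers of a common word. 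Then $p=t^a$, $q=t^b$ for some $t$, and primitivity of $p$ forces $a=1$, i.e. $p=t$; similarly $q=t$, so $p=q$ and then $r=s$. Alternatively, since the paper cites \cite{KM}, I may simply invoke that reference for uniqueness and only spell out the short commutation argument if a self-contained treatment is wanted.

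The main obstacle is the commutation lemma "if two words commute then they are powers of a common word", together with its prerequisite "if $pq=qp$ as words then $p,q$ share a primitive root". This is where the real combinatorial content lives; it is itself proved by induction on $|p|+|q|$, cancelling the common prefix (whichever of $p,q$ is shorter is a prefix of the other, say $q=pq'$, whence $pq'p=ppq'$ gives $q'p=pq'$ with strictly smaller total length). Everything else — the length bookkeeping, the base cases, the reduction of uniqueness to the case $r,s\ge 2$ — is routine. Given that the statement is attributed to Levi and a reference is already provided, I would keep the exposition short: state the commutation lemma, give the one-line inductive reduction, and conclude, rather than reproving classical word-combinatorics from scratch.
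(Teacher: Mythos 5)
The paper does not actually prove this lemma; it states it and defers to the Lyndon--Sch\"utzenberger reference \cite{KM}. Your proposal therefore does more than the paper: it supplies a correct, self-contained argument along the standard lines. The existence half (strong induction on $|w|$, taking $a(w)=w$ if $w$ is already primitive and otherwise recursing into a proper-power factor) is routine and correct. For uniqueness, your key step --- from $w=p^r=q^s$ with $p,q$ primitive, $r,s\ge 2$, and $|p|\le|q|$, reading the length-$(|p|+|q|)$ prefix of $w$ two ways to get $pq=qp$ --- is sound: from the $q^s$-side that prefix is $q$ followed by the length-$|p|$ prefix of $q$, which is $p$ (so the prefix is $qp$); from the $p^r$-side it is $p$ followed by the length-$|q|$ prefix of $p^{r-1}$, which equals $q$ because $q$ is itself a prefix of $w=p^r$ of that length; so the prefix is $pq$, giving $pq=qp$. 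Combined with the commutation lemma (two words commute iff they are powers of a common word, proved by the prefix-cancellation induction you sketch) and primitivity, uniqueness follows. Since the paper's exposition stops at a citation, your version is a genuinely self-contained alternative; the trade-off is length versus transparency, and a reader who wants to stay elementary and not chase the reference would benefit from spelling out exactly the commutation lemma as you do. One small stylistic note: the aside ``so $|p|$ divides $|q|\cdot s/r$'' is not used and could be dropped, and the phrase ``both have length at least $2\max(|p|,|q|)$'' can be tightened to simply $|w|\ge|p|+|q|$, which is all the two-way reading needs.
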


$a(w)$ is called the primitive root of $w$, this lemma means the primitive root of a word is unique. Let $w$ be a word,
we denote by $L_w$ the set of all words different from $w$ only at the last digit.

\begin{lemma}\label{primitive root2}
If $w$ is a non-primitive word, then any word in $L_w$ is primitive.
\end{lemma}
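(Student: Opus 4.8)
The plan is to prove Lemma \ref{primitive root2} directly by contraposition. Suppose $w$ is non-primitive, so by Lemma \ref{primitive root1} we may write $w = a^n$ for its primitive root $a$ with $n \geq 2$; set $p = |a|$, so $|w| = np$. Let $w' \in L_w$ be arbitrary, meaning $w'$ agrees with $w$ in all positions except the last. I want to show $w'$ is primitive, so I argue by contradiction: assume $w' = b^k$ for some word $b$ and some $k \geq 2$; put $q = |b|$, so $|w'| = |w| = np = kq$.

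The key combinatorial point is a comparison of periods. The word $w$ has period $p$ (meaning $w_i = w_{i+p}$ for all valid $i$), and the word $w'$ has period $q$. Since $w$ and $w'$ differ only in their last position, the prefix of length $|w| - 1$ of $w$ equals the prefix of length $|w'| - 1$ of $w'$; call this common word $v$, of length $np - 1 = kq - 1$. Now $v$ inherits period $p$ from $w$ and period $q$ from $w'$. If I can arrange that $|v| \geq p + q$ — equivalently $np - 1 \geq p + q$, which holds since $n \geq 2$ gives $np - 1 \geq p + (p-1) \geq p + 1$ and more care is needed when $q$ is large — then the Fine–Wilf theorem forces $v$ to have period $\gcd(p,q)$. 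I expect this is where I must be careful: I should first handle the possibility $q \geq p$ or $q < p$ separately, or instead argue more elementarily without invoking Fine–Wilf, using only that the last letter of $w$ at position $np$ is determined by periodicity from position $np - p$, while the last letter of $w'$ at position $kq$ is determined from position $kq - q = np - q$, and both of these earlier positions lie in the common prefix $v$ (provided $p \geq 1$ and $q \geq 1$, which is automatic, and provided those indices are $\leq np - 1$, i.e. $p \geq 1$ and $q \geq 1$). This last observation is the cleanest route: position $np - p \leq np - 1$ and position $np - q \leq np - 1$, so $w_{np-p} = v_{np-p} = w'_{np-p}$ and similarly at $np - q$; hence $w_{np} = w_{np-p} = w'_{np-p}$ and $w'_{np} = w'_{np-q} = w_{np-q}$. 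But $w_{np} \neq w'_{np}$ by hypothesis, so $w_{np-p} \neq w_{np-q}$, i.e. the letters of $w$ (equivalently of $v$) at positions $np - p$ and $np - q$ differ.

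Then I combine this with the periodicity of $w$: since $w$ has period $p$, $w_{np-p} = w_{np - 2p} = \cdots$, and since $w'$ has period $q$ the common prefix $v$ also has period $q$; I should chase the two periods inside $v$ to derive that $w_{np-p} = w_{np-q}$ after all, contradicting the previous paragraph. Concretely, inside $v$ (which has both periods $p$ and $q$ and is long enough because $|v| = np - 1 \geq 2p - 1 \geq p + q$ in the case $q \leq p - 1$, and one handles $q \geq p$ symmetrically by swapping roles), Fine–Wilf gives that $v$ has period $\gcd(p,q)$, hence $w_{np-p} = w_{np-q}$ since $np - p \equiv np - q \pmod{\gcd(p,q)}$ — contradiction. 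The main obstacle I anticipate is making the length bound $|v| \geq p + q - \gcd(p,q)$ rigorous in all cases (both $p \mid$ related to $q$ and not), and deciding whether to cite Fine–Wilf or to give the short self-contained periodicity chase; I lean toward the latter since the paper's style is elementary, and because in the worst subcase ($q$ close to $np$) one can instead observe $k \geq 2$ forces $q \leq np/2 \leq np - 1 - p$ is false in general, so the genuinely careful case is small $n$, namely $n = 2$, where $|w| = 2p$ and I must rule out $w' = b^k$ with $q = |b|$ dividing $2p$; here a direct check using $w_{2p} \ne w'_{2p}$ against $w_{2p} = w_p = w'_p$ and $w'_{2p} = w'_{2p-q} = w_{2p-q}$, together with $w$ having period $p$, pins things down.
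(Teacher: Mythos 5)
Your argument is correct, and it is a genuinely different route from the paper's. The paper proves the lemma by bare-hands word rewriting: writing $w=a^m$ and $w'=z^n$, it exploits the commutation $w'z=zw'$, substitutes $a=zb$, and cancels prefixes to extract a factorization $a'=vb$ that is incompatible with $a=zb$, with a separate reduction when $|z|>|a|$. You instead view everything through periodicities: the common prefix $v$ of length $np-1$ inherits period $p$ from $w=a^n$ and period $q$ from $w'=b^k$; since $n,k\ge 2$ force $p\le np/2$ and $q\le np/2$, you have $|v|=np-1\ge p+q-1\ge p+q-\gcd(p,q)$, so the Fine--Wilf theorem gives $v$ the period $\gcd(p,q)$. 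As $np-p$ and $np-q$ are congruent modulo $\gcd(p,q)$ and both lie in $[1,np-1]$, you get $v_{np-p}=v_{np-q}$, whereas periodicity pulls the unequal last letters $w_{np}\ne w'_{np}$ back to $v_{np-p}\ne v_{np-q}$ --- contradiction. Your version is shorter and more conceptual, and it avoids the case split on $|z|\lessgtr|a|$, at the price of citing Fine--Wilf rather than keeping the argument self-contained in the style of the paper (which cites only the Lyndon--Sch\"utzenberger primitivity fact). Both are sound; your write-up would benefit from committing to the Fine--Wilf route cleanly rather than hedging about whether a ``self-contained periodicity chase'' might replace it, since the length bound $np-1\ge p+q-\gcd(p,q)$ does hold in every case (the worry about $q$ being large is resolved exactly by $k\ge 2$), and the $p=q$ degenerate case is absorbed automatically by the same inequality.
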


\begin{proof}
As $w$ is not primitive, then $w=a^m$ where $a$ is the primitive root of $w$ and $m>1$.  $w^\prime$ is any element of $L_w$, then $w^\prime=a^{m-1}a^\prime$ for some $a^\prime\in L_a$.
Now assume $w^\prime$ is not primitive, then $w^\prime=z^n$ where $z$ is the primitive root of $w^\prime$ and $n>1$.
Obviously $|z|\neq|a|$.

If $|z|<|a|$, then $n>m\geq2$ and $a=zb$ for some $b\in X^\star$.
\[a^{m-1}a^\prime=z^n\Longrightarrow za^{m-1}a^\prime=a^{m-1}a^\prime z\Longrightarrow za^{m-1}a^\prime=zba^{m-2}a^\prime z\Longrightarrow\]
\[\exists v\in X^\star, s.t\ a=bv,|v|=|z|\Longrightarrow a^{m-1}bv^\prime=ba^{m-2}a^\prime z (a^\prime=bv^\prime)\Longrightarrow \]

\qquad\quad$v^\prime=z\text{ and}\ a^{m-1}b=ba^{m-2}a^\prime\Longrightarrow a^{m-2}bvb=ba^{m-2}a^\prime\Longrightarrow a^\prime=vb$.

\qquad\quad It is a contradiction to $a=zb$.

If $|z|>|a|$, then there exists $z^\prime\in L_z$ such that $z^{n-1}z^\prime=a^m=w$ with $m>n\geq2$. It reduces to the
case above.

\end{proof}

Now, let $\theta$ be a periodic angle with period $n\geq2$. $\nu(\theta)$ is the kneading sequence of $\theta$.
\begin{definition}
If there is a word $w=\nu_1\ldots\nu_t$ such that $\nu(\theta)=\overline{w^{s-1}w_\star}:=\overline{\underbrace{w\ldots w}_{s-1}w_\star}$, where $w_\star=\nu_1\ldots\nu_{t-1}\star$ and $t$ is a proper factor of $n$ with $ts=n$, then $\nu(\theta)$ is called cyclic,
otherwise $\nu(\theta)$ is called acyclic.
\end{definition}

\begin{definition}\label{expression}
$\nu(\theta)=\overline{w^{s-1}w_\star}$ is cyclic. If $w$ is a primitive word, we call $\overline{w^{s-1}w_\star}$ a cyclic expression of $\nu(\theta)$.
\end{definition}

The following proposition is a corollary of Lemma \ref{primitive root1} and \ref{primitive root2}.

\begin{proposition} \label{cyclic}
If $\nu(\theta)$ is cyclic, then its cyclic expression is unique.
\end{proposition}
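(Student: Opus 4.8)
The plan is to reduce the uniqueness of the cyclic expression to the uniqueness of the primitive root (Lemma~\ref{primitive root1}), after stripping away the $\star$ that distinguishes $w_\star$ from $w$. Suppose $\nu(\theta)$ admits two cyclic expressions, say $\nu(\theta)=\overline{w^{s-1}w_\star}=\overline{u^{r-1}u_\star}$ with both $w$ and $u$ primitive, $|w|=t$, $|u|=t'$, $ts=n=t'r$. Comparing the two periodic words of common period $n$ digit by digit, they agree in every position except possibly where a $\star$ occurs. But in $\overline{w^{s-1}w_\star}$ the symbol $\star$ sits exactly at positions congruent to $0 \bmod t$ (i.e. the last digit of each length-$t$ block), and in $\overline{u^{r-1}u_\star}$ at positions congruent to $0 \bmod t'$; since $\nu(\theta)$ is a single fixed sequence, its set of $\star$-positions is well defined, forcing $\{t,2t,\ldots\}=\{t',2t',\ldots\}$ as subsets of $\Z/n\Z$, hence $t=t'$ and therefore $s=r$.

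Once $t=t'$, I would argue that $w=u$. Away from the last digit, the length-$t$ word $w$ (with its final letter) and $u$ (with its final letter) are read off from the same non-$\star$ entries of $\nu(\theta)$: indeed $w^{s-1}$ occupies positions $1,\dots,(s-1)t$, none of which is a $\star$-position, and likewise $u^{r-1}=u^{s-1}$; since $s\ge 2$ we have $(s-1)t\ge t$, so the first $t$ digits of $\nu(\theta)$ that are not $\star$ already determine the first $t-1$ digits of $w$ and of $u$, giving $\nu_1\cdots\nu_{t-1}$ in common. It remains to pin down the last digit. Here I invoke the periodicity: the full (non-$\star$) word $w=\nu_1\cdots\nu_{t-1}\nu_t$ equals the block of $\nu(\theta)$ read in positions $(s-1)t+1,\dots,st$ \emph{but with the $\star$ at position $st$ replaced by its ``true'' value} — and this true value is forced to be the same for both expressions because $\overline{w^{s-1}w_\star}$ as a periodic object, once we know it is genuinely $t$-periodic outside the $\star$'s, has a unique letter that can legitimately fill each $\star$-slot to make the underlying word $t$-periodic, namely $\nu_t$. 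Thus $w=u$, and the two cyclic expressions coincide.

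Alternatively, and perhaps more cleanly, I would phrase it via primitivity: from $t=t'$ and the matching of all non-$\star$ positions, $w$ and $u$ agree in their first $t-1$ letters, so $u\in L_w$ in the notation before Lemma~\ref{primitive root2} (they differ at most in the last letter). If $w\ne u$, then $w$ and $u$ are two distinct words sharing a length-$(t-1)$ prefix; but the constraint that \emph{both} $\overline{w^{s-1}w_\star}$ and $\overline{u^{s-1}u_\star}$ equal the same sequence $\nu(\theta)$ means that, reading $\nu(\theta)$ over one full period and filling each $\star$ by the letter dictated by $t$-periodicity of the non-$\star$ part, we get a single well-defined word; this word must be both $w$ and $u$, contradiction. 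The role of Lemma~\ref{primitive root2} is to guarantee consistency: if $n$ itself is such that $t$ is the minimal period, primitivity of $w$ (part of Definition~\ref{expression}) rules out the degenerate possibility that a coarser block length also works, which is what could otherwise produce a genuinely different-looking expression.

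The main obstacle, and the point needing the most care, is the bookkeeping around the $\star$ symbol: $\star$ is not an element of the alphabet $X$, so one cannot directly apply Levi's lemma to words containing it. The crux is to show that the positions of $\star$ in the sequence $\nu(\theta)$ are intrinsic (they are exactly the indices $j$ with $\tau^{j-1}(\theta)$ a preimage of $\theta$, hence the indices $\equiv 0\bmod n$ — wait, more precisely $\equiv 0 \bmod$ the exact period, which here is $n$, but a cyclic expression asserts a finer block structure), and then to transfer the problem to the genuine words over $X$ obtained by replacing each $\star$ with the periodically-forced letter. I expect the write-up to spend its effort precisely on making ``the periodically-forced letter is well defined and the same for both expressions'' rigorous, after which Lemma~\ref{primitive root1} closes the argument immediately.
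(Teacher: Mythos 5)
Your argument hinges on the claim that in $\overline{w^{s-1}w_\star}$ the symbol $\star$ occupies all positions $\equiv 0\bmod t$. That is not what the definition says, and it is false: one period of $\overline{w^{s-1}w_\star}$ is the length-$n$ word $ww\cdots w\,\nu_1\cdots\nu_{t-1}\star$, so every interior length-$t$ block ends with the genuine letter $\nu_t$ and the single $\star$ occurs only at position $n$ (equivalently, $\star$ marks exactly the positions $j$ with $\tau^{j}(\theta)=\theta$, i.e.\ $j\equiv 0\bmod n$). Consequently the sets $\{t,2t,\ldots\}$ and $\{t',2t',\ldots\}$ you compare are not the $\star$-sets of anything, and the deduction $t=t'$ --- which is the entire content of the proposition, since once $t=t'$ both $w$ and $u$ equal $\nu_1\cdots\nu_t$ read off the non-$\star$ prefix --- is unsupported. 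You in fact notice the problem in your closing paragraph (``wait, more precisely $\equiv 0\bmod$ the exact period, which here is $n$'') but do not repair the argument; ``a cyclic expression asserts a finer block structure'' is precisely what needs proof, not something you may assume.

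The paper's proof sidesteps the need to establish $t=t'$ a priori. It forms the two length-$n$ words $w^s$ and $u^l$ over the honest alphabet $X$ by filling the single $\star$ at position $n$ with the true last letters $\nu_t$ and $\epsilon_m$ respectively (these \emph{are} determined: since $t<n$, position $t$ is a non-$\star$ position of $\nu(\theta)$, so $\nu_t$ is read off directly, and similarly for $\epsilon_m$). If $\nu_t=\epsilon_m$ then $w^s=u^l$, and Lemma~\ref{primitive root1} (uniqueness of the primitive root) forces $w=u$. If $\nu_t\neq\epsilon_m$, then $w^s$ and $u^l$ differ only in their last digit, i.e.\ $w^s\in L_{u^l}$; since $u^l$ is non-primitive ($l\geq 2$), Lemma~\ref{primitive root2} says every word in $L_{u^l}$ is primitive, contradicting that $w^s$ is a nontrivial power ($s\geq 2$). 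To repair your write-up you would need to import exactly this dichotomy on the last letters; the $\star$-position bookkeeping cannot do the work on its own.
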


\begin{proof}
Assume $\overline{w^{s-1}w_\star}$ and $\overline{u^{l-1}u_\star}$ are two cyclic expression of $\nu(\theta)$ where
$w=\nu_1\ldots\nu_t$ and $u=\epsilon_1\ldots\epsilon_m$. If $\nu_t=\epsilon_m$, then $w^s=u^l$. By Lemma \ref{primitive root1}, we have $w=u$. If $\nu_t\neq\epsilon_m$, then $w^s=u^{l-1}u^\prime$ with some $u^\prime\in L_u$, but this is a contradiction to Lemma \ref{primitive root2}.
\end{proof}

\subsection{Filled-in Julia sets and the Multibrot set}

Let us recall some results about filled-in Julia set and Multibrot set that will be used following. These can be found in \cite{DH}, \cite{Mil}\ and \cite{DE}.

For $c\in \C$, we denote by $K_c$ the filled-in Julia set of $f_c$, that is the set of points $z\in \C$ whose orbit under $f_c$ is bounded. We denote by $M_{d}$ the Multibrot set for $f_{c}(z)=z^{d}+c$, that is the set of parameters $c\in \C$ for which the critical point $0$ belongs to $K_c$.

If $c\in M_{d}$, then $K_c$ is connected. There is a conformal isomorphism $\phi_c:\C\smm \overline K_c\to \C\smm \overline \D$ which satisfies $\phi_c\circ f_c=  \big(\phi_c\big)^{d}$ and $\phi_c^\prime(\infty)=1$. The dynamical ray of angle $\theta\in \T$ is
\[R_c(\theta):=\bigl\{z\in \C\smm K_c\mid \arg\bigl(\phi_c(z)\bigr)=2\pi\theta\bigr\}.\]
If $\theta$ is rational, then as $r$ tends to $1$ from above, $\phi_c^{-1}(r{\rm e}^{2\pi i\theta})$ converges to a point $\gamma_c(\theta)\in K_c$. We say that $R_c(\theta)$ lands at $\gamma_c(\theta)$. We have
$f_c\circ \gamma_c=  \gamma_c\circ \tau$ on $\mathbb{Q}/\Z$.
In particular, if $\theta$ is periodic under $\tau$, then $\gamma_c(\theta)$ is periodic under $f_c$. In addition, $\gamma_c(\theta)$ is either repelling (its multiplier has modulus $>1$) or parabolic (its multiplier is a root of unity).

If $c\notin M_{d}$, then $K_c$ is a Cantor set. There is a conformal isomorphism $\phi_c:U_c\to V_c$ between neighborhoods of $\infty$ in $\C$, which satisfies  $\phi_c\circ f_c= \big( \phi_c\big)^{d}$ on $U_c$. We may choose $U_c$ so that $U_c$ contains the critical value $c$ and  $V_c$ is the complement of a closed disk.  For  each $\theta\in \T$,  there is an infimum $r_c(\theta)\geq 1$ such that $\phi_c^{-1}$ extends analytically along $R_0(\theta)\cap \bigl\{z\in \C\mid r_c(\theta)<|z|\bigr\}$. We denote by $\psi_c$ this extension and by $R_c(\theta)$ the dynamical ray
\[R_c(\theta):=\psi_c\Big(R_0(\theta)\cap \bigl\{z\in \C\mid r_c(\theta)<|z|\bigr\}\Big).\]
As $r$ tends to $r_c(\theta)$ from above, $\psi_c(r{\rm e}^{2\pi i\theta})$ converges to a point $x\in \C$. If $r_c(\theta)>1$, then $x\in \C\smm K_c$ is an iterated preimage of $0$ and we say that $R_c(\theta)$ bifucates at $x$. If $r_c(\theta)=1$, then $\gamma_c(\theta):=x$ belongs to $K_c$ and we say that $R_c(\theta)$ lands at $\gamma_c(\theta)$. Again, $f_c\circ \gamma_c = \gamma_c\circ \tau$ on the set of $\theta$ such that $R_c(\theta)$ does not bifurcate. In particular,  if $\theta$ is periodic under $\tau$ and $R_c(\theta)$ does not bifurcate, then $\gamma_c(\theta)$ is periodic under $f_c$.

The Multibrot set is connected. The map
\[\phi_{M_{d}}:\C\smm M_{d}\ni c\mapsto\phi_c(c)\in  \C\smm \overline \D\]
is a conformal isomorphism. For $\theta\in \T$, the parameter ray $R_{M_{d}}(\theta)$ is
\[R_{M_{d}}(\theta):= \bigl\{c\in \C\smm M_{d}\mid \arg\bigl(\phi_{M_{d}}(c)\bigr)=2\pi \theta\bigr\}.\]
It is known that if $\theta$ is rational, then as $r$ tends to $1$ from above, $\phi_{M_{d}}^{-1}(r{\rm e}^{2\pi i\theta})$ converges to a point $\gamma_{M_{d}}(\theta)\in M_{d}$. We say that $R_{M_{d}}(\theta)$ lands at $\gamma_{M_{d}}(\theta)$.

\begin{figure}[htbp]

\centering

\includegraphics[width=14.5cm]{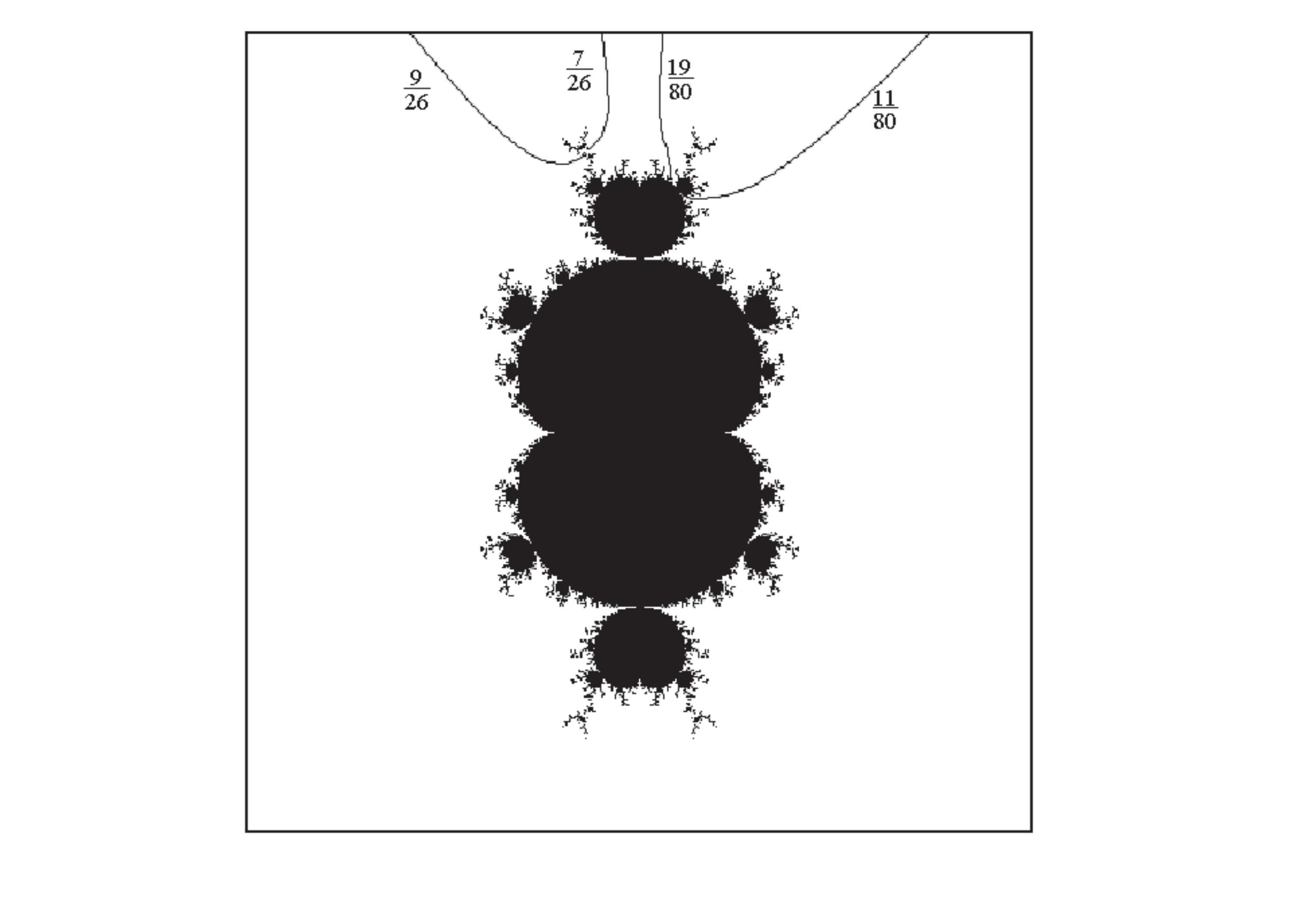}

\caption{ The parameter rays $R_{M_3}(7/26)$ and $R_{M_3}(9/26)$ land on a common root of a primitive hyperbolic component while $R_{M_3}(19/80)$ and $R_{M_3}(11/80)$ land on a common root of a satellite hyperbolic component. Only angles of rays are labelled in the graph.} \label{fig3}

\end{figure}

If $\theta$ is periodic  for $\tau$ of exact period $n$ and if  $c_0:=\gamma_{M_{d}}(\theta)$, then  the point $\gamma_{c_0}(\theta)$ is periodic for $f_{c_0}$ with period $p$ dividing $n$ ($ps=n,\ s\geq1$) and multiplier a $s$-$th$ root of unity.
If the period of $\gamma_{c_0}(\theta)$ for $f_{c_0}$ is exactly $n$ then the multiplier is $1$, $c_0$ is called primitive parabolic parameter, otherwise $c_0$ is called satellite parabolic parameter.

\begin{lemma}[near parabolic map] \label{near parabolic}
$c_0$ is defined as above. When we make a small perturbation to $c_0$ in parameter space, 
If $c_0$ is a primitive parabolic parameter, then the parabolic orbit of $f_{c_0}$ is splitted into a pair of  nearby periodic orbits of $f_c$, both have length $n$;
If $c_0$ is a satellite parabolic parameter, then the parabolic orbit of $f_{c_0}$ is splitted into a pair of  nearby periodic orbits of $f_c$, one has length $p$ and the other has length $sp=n$.
\end{lemma}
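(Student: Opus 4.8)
The plan is to reduce the statement to the local theory of a parabolic fixed point applied to the first-return map of the parabolic cycle, the only genuinely non-formal ingredient being that $f_c=z^d+c$ has a single critical orbit. Write $w_0:=\gamma_{c_0}(\theta)$, let $\{w_0,\dots,w_{p-1}\}$ be its $f_{c_0}$-cycle (so $ps=n$), put $G_c:=f_c^{\circ p}$ and $\rho:=G_{c_0}'(w_0)$, an $s$-th root of unity by the fact recalled above. First I would note that $\rho$ is in fact a \emph{primitive} $s$-th root of unity: the $s$ distinct angles $\theta,\tau^p(\theta),\dots,\tau^{(s-1)p}(\theta)$ all land at $w_0$ under $\gamma_{c_0}$, and $G_{c_0}$ cyclically permutes the $s$ corresponding rays in a single orbit, so the combinatorial rotation number of $G_{c_0}$ on the rays at $w_0$ has denominator $s$; for a parabolic periodic point this denominator equals the order of the multiplier (\cite{Mil}, \cite{DH}), so $\rho$ has order $s$ (in the primitive case, $s=1$ and $\rho=1$). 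Thus $w_0$ is a parabolic fixed point of $G_{c_0}$ with multiplier a primitive $s$-th root of unity, while $G_{c_0}^{\circ s}=f_{c_0}^{\circ n}$ fixes $w_0$ with multiplier $1$.

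The key local input is that the parabolic multiplicity of $w_0$ equals $1$, i.e. $z\mapsto f_{c_0}^{\circ n}(z)-z$ vanishes to order exactly $s+1$ at $w_0$. By the Leau--Fatou flower theorem this order is $\nu s+1$ for some integer $\nu\ge1$, and the $\nu s$ attracting petals at $w_0$ group, under $G_{c_0}$, into $\nu$ cycles of length $s$, each generating a distinct cycle of parabolic Fatou components of $f_{c_0}$ that must contain a critical point of $f_{c_0}$ (\cite{Mil}). Since $0$ is the only critical point and distinct such basins are pairwise disjoint, $\nu=1$. In particular, for $s=1$ the map $z\mapsto f_{c_0}^{\circ n}(z)-z$ has a double zero at $w_0$, while for $s\ge2$ the map $z\mapsto G_{c_0}(z)-z$ has a simple zero at $w_0$ (multiplier $\rho\ne1$) and $z\mapsto f_{c_0}^{\circ n}(z)-z$ a zero of order $s+1$ there.

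I would then carry out the perturbation bookkeeping with the implicit function theorem and the divisibility $f_c^{\circ p}(z)-z\mid f_c^{\circ n}(z)-z$ (valid because it holds for every $c\notin M_d$, where all periodic points of $f_c$ are simple roots of the relevant iterate, so the quotient is a polynomial in $(c,z)$), writing $f_c^{\circ n}(z)-z=\bigl(f_c^{\circ p}(z)-z\bigr)\,P(c,z)$. Fix $c$ near $c_0$ with $c\ne c_0$. If $s\ge2$: the simple zero $w_0$ of $G_{c_0}(z)-z$ persists as a simple zero $\widetilde w_0(c)$ of $G_c(z)-z$, giving a nearby $f_c$-cycle of exact period $p$; moreover $z\mapsto P(c_0,z)$ has a zero of order $s$ at $w_0$, so $P(c,\cdot)$ has exactly $s$ zeros near $w_0$. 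None of them is fixed by $G_c^{\circ s'}$ for a proper divisor $s'$ of $s$, since $G_{c_0}^{\circ s'}$ has $w_0$ as a simple fixed point ($\rho^{s'}\ne1$) and hence $\widetilde w_0(c)$ is its only fixed point near $w_0$; therefore these $s$ points form a single $G_c$-cycle of length exactly $s$. Its $f_c$-orbit shadows the $p$-cycle $\{w_0,\dots,w_{p-1}\}$, meeting a neighbourhood of each $w_i$, so its length is a multiple of $p$ and hence equals $ps=n$; thus the parabolic $p$-cycle splits into a nearby $p$-cycle and a nearby $n$-cycle. If $s=1$: then $G_c=f_c^{\circ n}$ and the double zero $w_0$ of $f_{c_0}^{\circ n}(z)-z$ splits into two distinct simple zeros for $c\ne c_0$ near $c_0$ (using that $\partial_c\bigl(f_c^{\circ n}(z)-z\bigr)\ne0$ at $(c_0,w_0)$, which is the quadratic-differential computation of Case 2 in the proof of Theorem \ref{Smooth} and does not invoke the present lemma); by the same shadowing each has exact $f_c$-period $n$, and they lie in distinct $f_c$-cycles, since an $n$-cycle shadowing $\{w_0,\dots,w_{n-1}\}$ has a single representative near $w_0$. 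This is the announced pair of $n$-cycles.

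The hard part is the step $\nu=1$: this is the one place where non-formal, ``global'' input enters, namely that the bounded critical orbit of $f_{c_0}$ is captured by the parabolic cycle and that $f_c$ has a single critical point, so there can be only one immediate parabolic basin. Everything else reduces to the implicit function theorem together with the vanishing to order $s+1$, the primitivity of $\rho$, and the shadowing of the parabolic orbit by the nearby cycles. An alternative that sidesteps this bookkeeping altogether is to invoke the Fatou-coordinate / parabolic-implosion description of the bifurcation directly, as in \cite{DH}.
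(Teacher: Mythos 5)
Your argument is correct and is in substance the same as Milnor's proof of Lemma 4.2 (the reference the paper cites and declares to transfer verbatim to general $d$): pass to the first-return map $G_{c_0}=f_{c_0}^{\circ p}$, identify the multiplier as a primitive $s$-th root of unity via the rotation number of the rays at $w_0$, use Leau--Fatou together with the single critical orbit of $z\mapsto z^d+c$ to pin the parabolic multiplicity to $1$ (equivalently $f_{c_0}^{\circ n}(z)-z$ vanishes to order exactly $s+1$ at $w_0$), and then do the Rouch\'e/IFT bookkeeping on the factorisation $f_c^{\circ n}(z)-z=(f_c^{\circ p}(z)-z)P(c,z)$ to read off the two nearby cycles. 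You correctly isolate the one non-formal input (uniqueness of the critical orbit forcing $\nu=1$), and you correctly note that the $s=1$ case borrows the $\partial_c P_n\ne0$ computation from Case~2 of Theorem~\ref{Smooth} without circularity. The only point worth flagging is that for the satellite case the clean conclusion ``a $p$-cycle plus an $n$-cycle'' holds off a discrete exceptional set of $c$ near $c_0$ (those $c$ for which the continued $p$-cycle has multiplier an $s$-th root of unity); since the lemma is only applied at generic nearby $c$, this is harmless, but it deserves a sentence.
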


This lemma was proved by Milnor in \cite{Mil} lemma $4.2$ for the case $d=2$, but we can translate the proof word by word to the general case.

\begin{figure}[htbp]

\centering
\includegraphics[width=14.5cm]{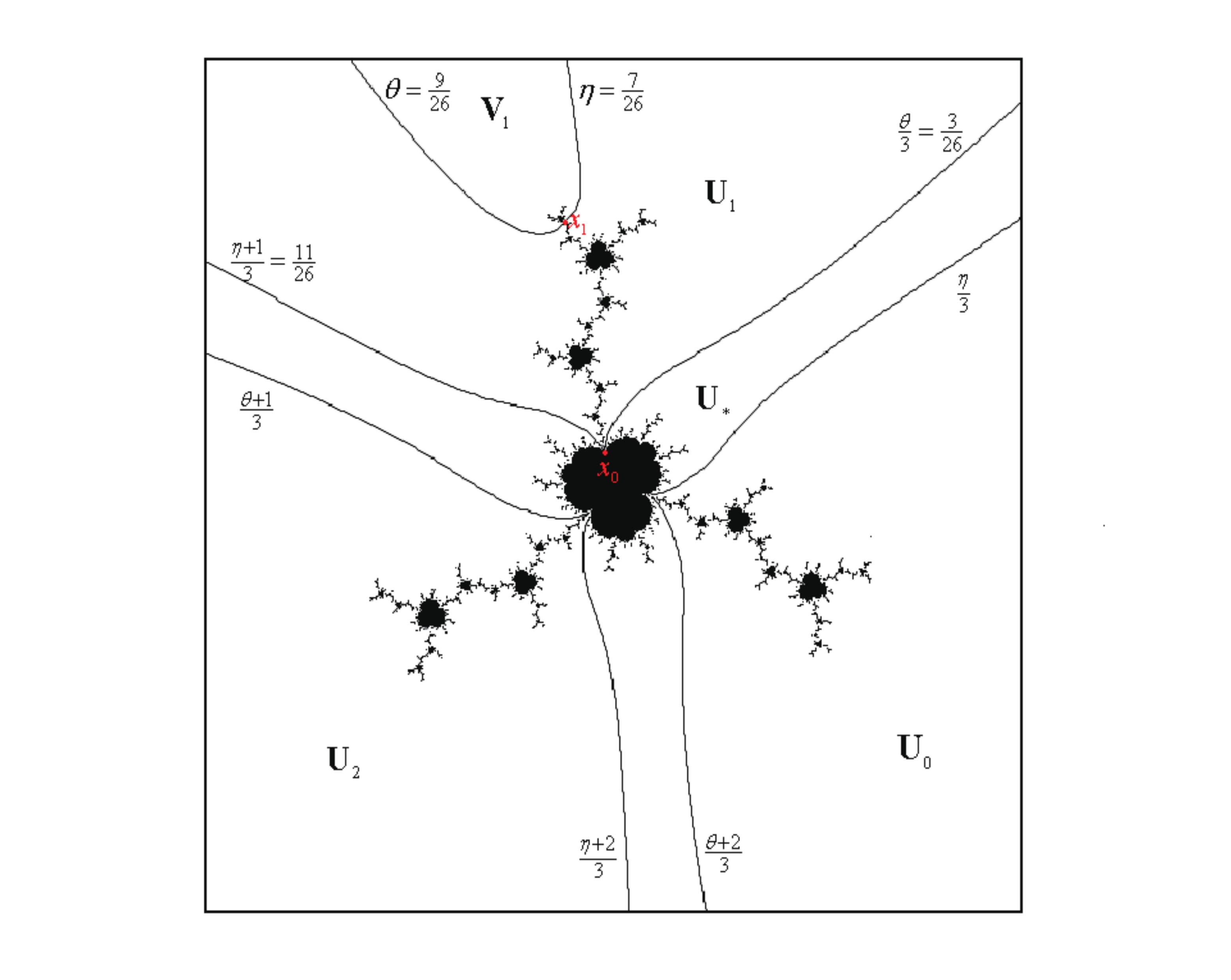}

\caption{ The dynamical plane of $f_{c_0}$. $c_0:=\gamma_{M_3}(7/26)=\gamma_{M_3}(9/26)$ is the root of some primitive hyperbolic component as illustrated in Figure \ref{fig3}. The dynamical rays $R_{c_0}(7/26)$ and $R_{c_0}(9/26)$ land on a common parabolic point of $f_{c_0}$ with period $3$. } \label{fig4}

\end{figure}

Let $H$ be periodic $n(n>1)$ hyperbolic component of $M_{d}$. For every parameter $c\in H, f_{c}$ has an attracting
periodic orbit $\{\ z(c), \ldots, f_{c}^{n-1}(z(c))\ \}$. Its multiplier define a map
$$\mu_{H}: H\rightarrow \D,\  c\mapsto \frac{\partial}{\partial z}f_{c}^{n}(z(c))$$
then $\mu_{H}: H\rightarrow \D$ is $d-1$ covering map with only one branched point .It extends continuously to a
neighborhood of  $\overline{H}$. Considering parameter $c\in \partial H$ such that $\mu_{H}(c)=1$, Eberlein proved that
among these points, there is exactly one $c$ which is the landing point of two parameter rays of period $n$, this point
is called root of $H$ (see Figure \ref{fig3}); the other $d-2$ points are landing points of only one parameter ray of period n each, they are called co-root of $H$ (see Figure \ref{fig6}). $H$ is called primitive or satellite hyperbolic component according to whether its root is  primitive or satellite parabolic parameter.

If $c$ is the root of some hyperbolic component and $c\neq \gamma_{M_{d}}(0)$, then two periodic parameter rays
$R_{M_{d}}(\theta)$ and $R_{M_{d}}(\eta)$ land on $c$, we say $\theta$ and $\eta$ are companion angles, and $\theta,\eta$ have the same period under $\tau$. $c$ is primitive if and only if the orbit of $R_{M_{d}}(\theta)$ and $R_{M_{d}}(\eta)$ under $\tau$ are distinct. In dynamical plane, the dynamic rays $R_{c}(\theta)$ and $R_{c}(\eta)$
land at a common point $x_{1}:=\gamma_{c}(\theta)=\gamma_{c}(\eta)$. This point is on the parabolic orbit
of $f_{c}$ with its immediate basin containing the critical value. $R_{c}(\theta)$ and $R_{c}(\eta)$ are adjacent to the Fatou component containing $c$ and the curve $R_{c}(\theta)\cup R_{c}(\eta)\cup \{x_{1}\}$ is a Jordan curve that cuts the
plane into two connected components: one component, denoted by $V_{1}$, contains the critical value $c$; the other component, denoted by $V_{0}$, contains $R_{c}(0)$ and all  points of parabolic cycle except $x_{1}$. Since $V_{1}$ contains the critical value, its preimage $U_{\star}=f_{c}^{-1}(V_{1})$ is connected and contains the  critical point $0$. It is bounded
by the dynamical rays $R_{c}(\theta/d),\ldots,R_{c}\bigl((\theta+d-1)/d\bigr);\ R_{c}(\eta/d),\ldots,R_{c}\bigl((\eta+d-1)/d\bigr)$.
Suppose $\theta>\eta$, and since each component of $\C\setminus\overline{U_\star}$ is conformally mapped to $V_0$ which is bounded by $R_c(\theta)$ and $R_c(\eta)$, it is easy to see that $R_{c}\bigl((\theta+k-1)/d\bigr)$ and $R_{c}\bigl((\eta+k)/d\bigr)$
land on a common point which is one of the preimage of $x_{1}$ for $k\in \mathbb{Z}_{d}$.
Denote $U_{k}$ the component of $\mathbb{C}\setminus R_{c}\bigl((\theta+k-1)/d\bigr)\cup \{\gamma_{c}\bigl((\eta+k)/d\bigr)\}\cup R_{c}\bigl((\eta+k)/d\bigr)$ disjoint with $U_{\star}$. See Figure \ref{fig4} (primitive case) and Figure \ref{fig5} (satellite case). Note that
$f_{c}:U_{k}\rightarrow V_{0}$ is conformal.

\begin{figure}[htbp]

\centering

\includegraphics[width=14.5cm]{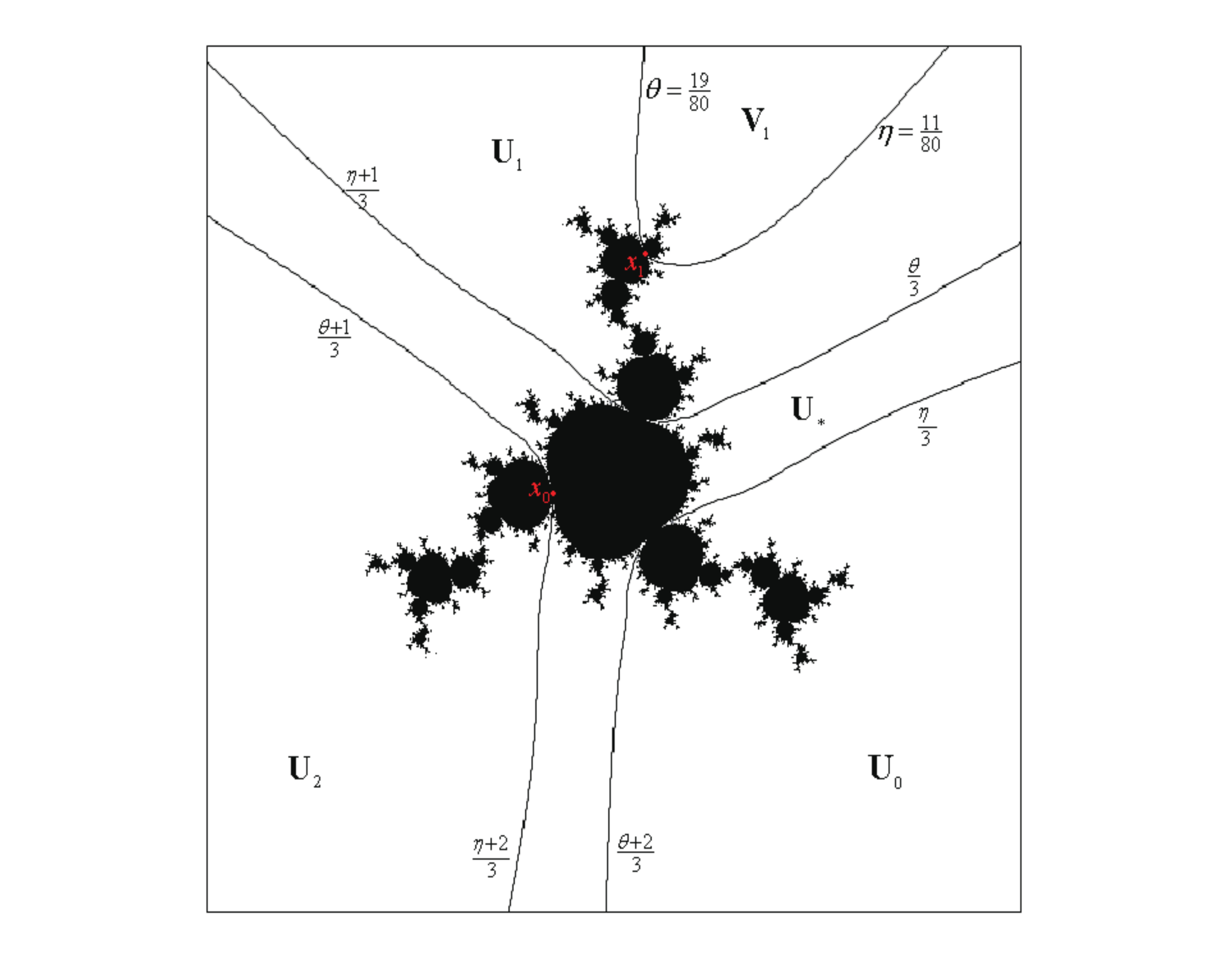}

\caption{ The dynamical plane of $f_{c_1}$. $c_1:=\gamma_{M_3}(11/80)=\gamma_{M_3}(19/80)$ is the root of some satellite hyperbolic component as illustrated in Figure \ref{fig3}. The dynamical rays $R_{c_1}(11/80)$ and $R_{c_1}(19/80)$ land on a common parabolic point of $f_{c_1}$ with period $2$. } \label{fig5}

\end{figure}

If $c$ is a co-root of some hyperbolic component, then  exactly one period parameter ray $R_{M_{d}}(\beta)$ land
on it (see Figure \ref{fig6}). In dynamical plane, $R_{c}(\beta)$ is the unique dynamical ray landing on a parabolic periodic point $\gamma_{c}(\beta):=x_{1}$, whose immediate basin contains the critical value $c$. The parameter $c$ is a primitive parabolic parameter.
Denote $V_{1}$ the union of Fatou component containing $c$ and external ray $R_{c}(\beta)$, $V_{0}=\mathbb{C}\setminus \overline{V_{1}}$,
$U_{\star}=f_{c}^{-1}(V_{1})$. $U_{k}$ is the component of $f_{c}^{-1}(V_{0})$
adjacent with $R_{c}\bigl((\beta+k-1)/d\bigr)$ and $R_{c}\bigl((\beta+k)/d\bigr), k\in \mathbb{Z}_{d}$.(see Figure \ref{fig7}).

Remark: in our paper, if $c$ is a parabolic parameter, then $f_{c}$ has unique parabolic orbit, denoted by
$\{x_{0},x_{1},\ldots ,x_{p-1}\}$. $x_{1}$ is the point whose immediate basin contains critical value $c$.

The following lemma provides a criterion for $\theta$ such that $\gamma_{M_{d}}(\theta)$ is a primitive parabolic parameter.
\begin{definition}
Let $\theta$ be a periodic angle of period $n$ and the d-expansion of $\theta$ be $.\overline{\epsilon_1\ldots\epsilon_n}$. We call $\epsilon_1\ldots\epsilon_n$ the periodic part
of the d-expansion of $\theta$.
\end{definition}

\begin{figure}[htbp]

\centering

\includegraphics[width=11.5cm]{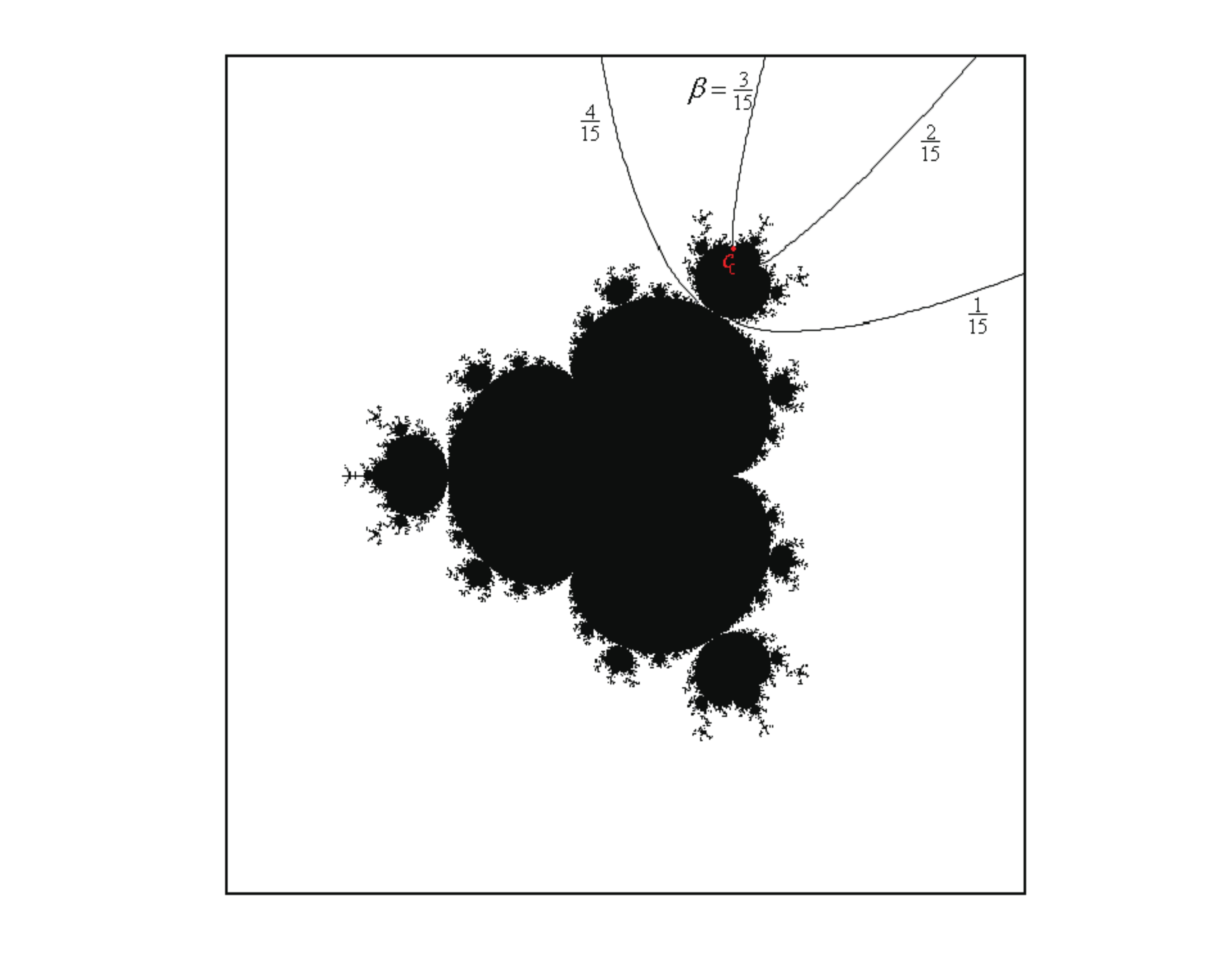}

\caption{Multibrot set $M_4$. The parameter rays $R_{M_4}(1/15)$ and $R_{M_4}(4/15)$ land on the root of some hyperbolic component. $R_{M_4}(2/15)$ and $R_{M_4}(1/5)$ land on two co-root of this hyperbolic component respectively.} \label{fig6}

\end{figure}

\begin{figure}[htbp]

\centering
\includegraphics[width=12.5cm]{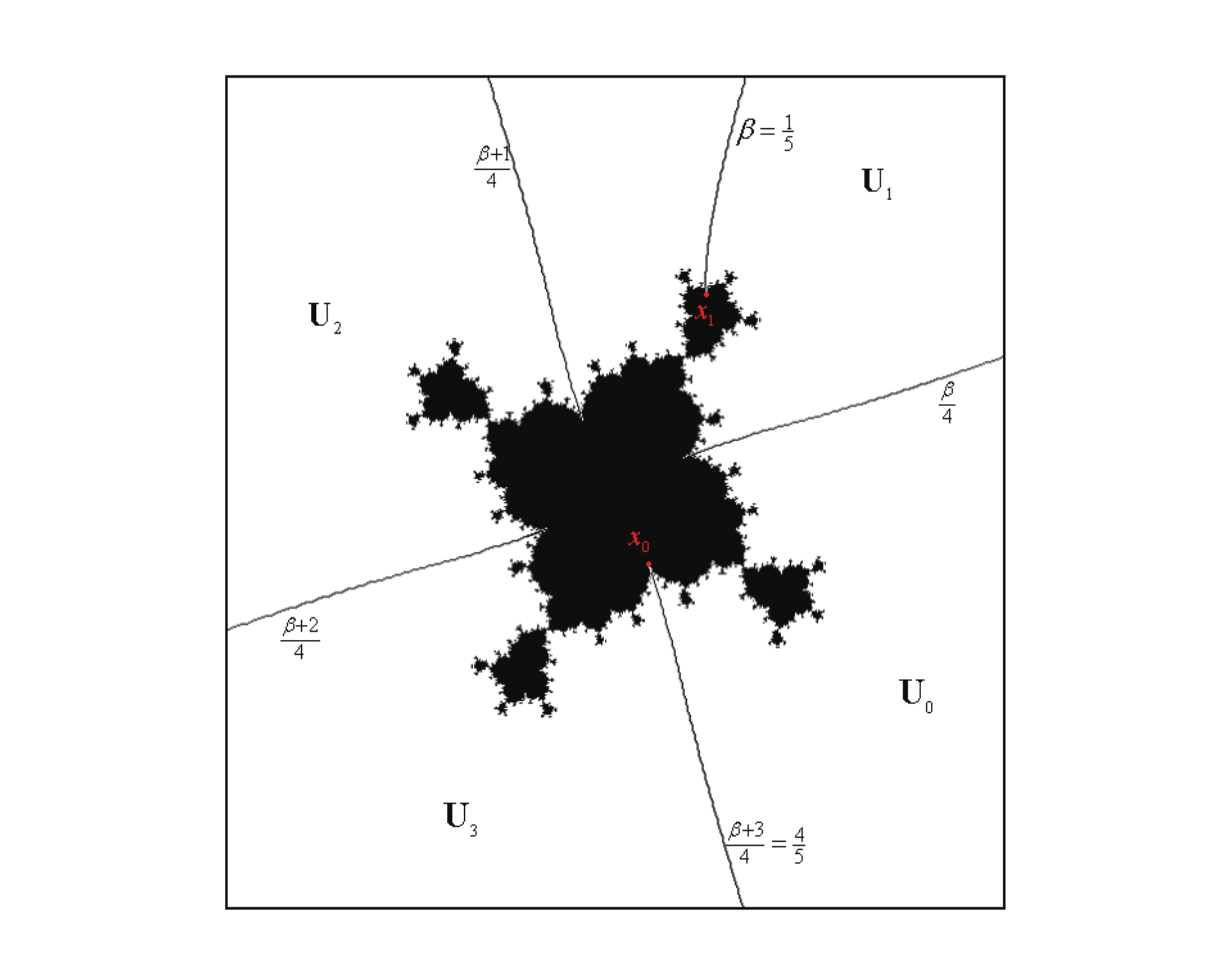}

\caption{The dynamical plane of $f_{c_0}$. $c_0:=\gamma_{M_4}(1/5)$ is a co-root of the hyperbolic component illustrated in Figure \ref{fig6}. $R_{c_0}(1/5)$ is the unique dynamical ray landing on $\gamma_{c_0}(1/5)$ which
is the parabolic point of $f_{c_0}$ with period $2$.} \label{fig7}

\end{figure}

\begin{lemma}\label{criterion}
$\theta$ is periodic under $\tau$ with period $n\geq2$. If $c_0:=\gamma_{M_{d}}(\theta)$ is the root of some satellite hyperbolic component, then $\theta$ satisfies the following properties:
\begin{description}
  \item[(1)] $\nu(\theta)$ is cyclic.
  \item[(2)]  Denote by $\overline{w^{s-1}w_\star}$ the cyclic expression of $\nu(\theta)$ where $w=\nu_1\ldots\nu_t$, $t$ is a proper factor of $n$ and $ts=n$. Then the last digit of the period part of the $d$-expansion of $\theta$ is $\nu_t$ or $\nu_t-1$.
\end{description}
Moreover, if $\theta$ is maximal in its orbit, then $\nu(\theta)$ also satisfies
\begin{description}
  \item[(3)] $t$ is the length of parabolic orbit  and the last digit of the period part of the $d$-expansion of $\theta$ must be $\nu_t-1\in [0,d-2]$.
\end{description}
\end{lemma}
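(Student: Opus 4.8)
The plan is to read the digits of $\nu(\theta)$ off the dynamical plane of $f:=f_{c_0}$ via the partition constructed just before this lemma, and then deduce the three assertions arithmetically. Since $c_0=\gamma_{M_d}(\theta)$ is a satellite root, its companion angle $\eta$ lies in the $\tau$-orbit of $\theta$; the parabolic cycle $\{x_0,x_1,\dots,x_{p-1}\}$ (with $f(x_i)=x_{i+1}$ cyclically) has exact period $p$, $ps=n$, $s\ge 2$; and, as $f\circ\gamma_{c_0}=\gamma_{c_0}\circ\tau$, the rays landing at $x_1$ are exactly $R_{c_0}(\tau^{kp}\theta)$, $0\le k<s$, two of which, $R_{c_0}(\theta)$ and $R_{c_0}(\eta)$, bound the component $V_1\ni c_0$ of the complement of the Jordan curve $\Gamma:=R_{c_0}(\theta)\cup R_{c_0}(\eta)\cup\{x_1\}$. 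Assume $\theta>\eta$ (the other case is parallel), keep $V_0,\ U_\star=f^{-1}(V_1)\ni 0,\ U_0,\dots,U_{d-1}$ as before, and recall from that construction that $\gamma_{c_0}((\theta+m)/d)=\gamma_{c_0}((\eta+m+1)/d)$ for all $m$, so that the plane minus the $d$ ``$\theta$-preimage rays'' $R_{c_0}((\theta+m)/d)$ (which map onto $R_{c_0}(\theta)\subset\Gamma$) splits into $d$ labelled regions $L_0,\dots,L_{d-1}$, where $L_m$ spans the angular arc $](\theta+m-1)/d,(\theta+m)/d[$ and is the union of $U_m$, the $\eta$-preimage ray inside it, and an adjacent subsector of $U_\star$. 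The first step is the elementary \emph{reading rule}: with $.\overline{\epsilon_1\dots\epsilon_n}$ the $d$-expansion of $\theta$ (so $d\,\tau^{j-1}\theta=\epsilon_j+\tau^{j}\theta$), comparing $d\,\tau^{j-1}\theta-\theta$ with the integers gives
\[
\nu_j=\begin{cases}\epsilon_j,&\text{if }\tau^{j}\theta<\theta,\\ \epsilon_j+1\ (\mathrm{mod}\ d),&\text{if }\tau^{j}\theta>\theta,\\ \star,&\text{if }\tau^{j}\theta=\theta;\end{cases}
\]
equivalently: $\nu_j=\star\iff n\mid j$; the angle $\tau^{n-1}\theta=(\theta+\epsilon_n)/d$ is the $\theta$-preimage ray separating $L_{\epsilon_n}$ from $L_{\epsilon_n+1}$, and it lands at $x_0=f^{\,p-1}(x_1)$; and for $\nu_j\ne\star$, $\nu_j$ is the label $m$ with $R_{c_0}(\tau^{j-1}\theta)\subset L_m$. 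In all cases $R_{c_0}(\tau^{j-1}\theta)$ lands at $f^{\,j-1}(x_1)$.

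For part (1) I would show $\nu_j=\nu_{j+p}$ for $1\le j\le n-1$, $j\ne n-p$. The rays $R_{c_0}(\tau^{j-1}\theta)$ and $R_{c_0}(\tau^{j+p-1}\theta)$ land at the common cycle point $y=f^{\,j-1}(x_1)$. If $p\nmid j$, then $y$ is not a preimage of $x_1$, hence lies in a single $L_m$, and every dynamical ray landing at $y$ lies in $L_m$, so $\nu_j=m=\nu_{j+p}$. If $p\mid j$, then $y=x_0$; here one uses that $f$ is a local homeomorphism at $x_0$ carrying the $s$ rays there, in their cyclic order, onto the $s$ rays at $x_1$ (where $R_{c_0}(\theta)$ and $R_{c_0}(\eta)$ are consecutive with $V_1$ between) to pin down the configuration at $x_0$ and force all the rays $R_{c_0}(\tau^{kp-1}\theta)$, $1\le k\le s-1$, to carry one and the same label. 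This yields $\nu(\theta)=\overline{w_0^{\,s-1}(w_0)_\star}$ with $w_0=\nu_1\dots\nu_p$, which is cyclic; replacing $w_0$ by its primitive root (Lemma~\ref{primitive root1}) and using the uniqueness in Proposition~\ref{cyclic} gives the cyclic expression $\overline{w^{s-1}w_\star}$ with $w=\nu_1\dots\nu_t$, $t$ a proper factor of $n$ dividing $p$. Finally, the cyclic period is \emph{exactly} $p$: a shorter period $t'<p$ would make the $f$-itinerary of $R_{c_0}(\tau^{i}\theta)$ agree with that of $R_{c_0}(\tau^{i+t'}\theta)$ at every depth, whence $f^{\,i}(x_1)=f^{\,i+t'}(x_1)$ for all $i$, contradicting that the cycle has exact period $p$.

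Part (2) is then essentially a consequence of the pairing on $\partial U_\star$: the point $x_0$ lies on the closure of $L_m$ precisely for $m\in\{\epsilon_n,\epsilon_n+1\}$ — indeed $x_0$ is a preimage of $x_1$, hence the landing point of exactly one $\theta$-preimage ray, $R_{c_0}((\theta+\epsilon_n)/d)$, bordering $L_{\epsilon_n}$ and $L_{\epsilon_n+1}$, and of exactly one $\eta$-preimage ray, $R_{c_0}((\eta+\epsilon_n+1)/d)\subset L_{\epsilon_n+1}$, and a short check then excludes all other $L_m$. Since $R_{c_0}(\tau^{p-1}\theta)$ also lands at $x_0$ and lies in $L_{\nu_p}=L_{\nu_t}$, this forces $\nu_t\in\{\epsilon_n,\epsilon_n+1\}$, i.e. the last digit $\epsilon_n$ of the periodic part of the $d$-expansion is $\nu_t$ or $\nu_t-1$, which is (2). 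For part (3), assume moreover that $\theta$ is maximal in its orbit. Then $\tau^{i}\theta<\theta$ for all $i\not\equiv 0\pmod n$, so Lemma~\ref{key} (equivalently the reading rule) gives $\nu_j=\epsilon_j$ for $1\le j\le n-1$ and $\epsilon_n\le d-2$; in particular $w_0=\epsilon_1\dots\epsilon_p$, which one checks is primitive, so $t=p$ equals the length of the parabolic cycle, and $\nu_t=\nu_p=\epsilon_p$ since $\tau^{p}\theta<\theta$. Moreover $\epsilon_n\ne\epsilon_p$: if they were equal then $\epsilon_1\dots\epsilon_n=(\epsilon_1\dots\epsilon_p)^{s}$ would have period $p<n$; and a lexicographic comparison of $\epsilon_1\dots\epsilon_n$ with its cyclic shift by $(s-1)p$ shows that maximality excludes $\epsilon_n>\epsilon_p$. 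Combining $\epsilon_n\in\{\epsilon_p-1,\epsilon_p\}$ with $\epsilon_n\ne\epsilon_p$ gives $\epsilon_n=\epsilon_p-1=\nu_t-1\in[0,d-2]$, which is (3).

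The main obstacle is the configuration analysis at $x_0$ underlying part (1) in the case $p\mid j$, together with the claim that the cyclic period of $\nu(\theta)$ is exactly $p$: both rest on the local dynamics of $f$ near its parabolic cycle --- the cyclic order of the $s$ rays at each cycle point, the principle that rays landing at a common point occur in the cyclic order of their angles, the behaviour under $f$, and the bookkeeping at those indices $i$ with $\tau^{i}\theta\in\{\theta,\eta\}$, where a ray lies on $\Gamma$ or on $\partial U_\star$ and the kneading digit is $\star$ or sits at a common boundary of two regions $L_m$. Once these dynamical facts are secured, the reading rule and the pairing on $\partial U_\star$ reduce all three conclusions to the short arithmetic statements above; parts (2) and (3) then follow with little further work.
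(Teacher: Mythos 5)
Your treatment of Parts (1) and (2) follows the same route as the paper: reading the kneading digits off the dynamical-plane partition $U_0,\dots,U_{d-1},U_\star$, deducing the $p$-periodicity $\nu_j=\nu_{j\ (\mathrm{mod}\ p)}$ from the transitive action of $f_{c_0}$ on the rays of the parabolic cycle, and then locating $x_0$ on $\partial U_{\nu_p}$ to pin the last digit of the $d$-expansion to $\nu_p-1$ or $\nu_p$ according as $\theta\gtrless\eta$. That part is fine and matches the paper's argument (with your $L_m$ being a harmless repackaging of their $U_m$).

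The gap is the primitivity claim. You assert, \emph{unconditionally}, that ``the cyclic period is exactly $p$,'' via the argument that a shorter period $t'<p$ would force the $f$-itineraries of $R_{c_0}(\tau^{i}\theta)$ and $R_{c_0}(\tau^{i+t'}\theta)$ to agree at every depth and hence $f^{i}(x_1)=f^{i+t'}(x_1)$. That inference is not valid here: the partition pieces $U_m$ can perfectly well contain several points of the parabolic cycle, and distinct points $x_{j}\ne x_{j'}$ of the cycle may share the same itinerary with respect to $\{U_m\}$, so a short period of $\nu(\theta)$ does not force any two cycle points to coincide. (Also note the ``$\star$'' entries of the two itineraries occur at different indices, so they cannot literally agree ``at every depth.'') The paper does \emph{not} claim $t=p$ without the maximality hypothesis; item (3) is exactly where that conclusion is established, and it is established by a genuinely different mechanism. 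Then for Part (3) you write ``$w_0=\epsilon_1\dots\epsilon_p$, which one checks is primitive,'' but no check is offered, and the subsidiary fact $\epsilon_n\ne\epsilon_p$ that you do prove does not imply primitivity of $\epsilon_1\dots\epsilon_p$ (e.g.\ $\epsilon_1\dots\epsilon_p$ could be a proper power regardless of what $\epsilon_n$ is). What the paper actually does under maximality is: identify $\eta=\tau^{p}(\theta)$ as the second-largest angle in the orbit, using the linear order $\theta>\tau^{p}(\theta)>\cdots$ of the rays landing at $x_1$ and the fact that all other orbit angles lie below $\eta$; then show that a shorter period $t<p$ of the kneading would give $\tau^{t}(\theta)>\tau^{p}(\theta)=\eta$ by comparing $w^{s-2}\,a\,w$ with $w^{s-1-p/t}\,a\,w^{p/t}$ lexicographically, contradicting the second-largest property of $\eta$. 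That comparison, and the identification $\eta=\tau^{p}(\theta)$, are the substance of Part (3), and they are missing from your proposal. You should replace both the unconditional primitivity claim and the unjustified ``one checks'' with this argument (or an equivalent one).
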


\begin{proof}
Let $\eta$ be the companion angle of $\theta$, then in dynamical plane of $f_{c_0}$, $R_{c_0}(\theta)$ and $R_{c_0}(\eta)$ land on $x_1$ (see Figure \ref{fig5}). As $V_1$ contains no points and external rays of the parabolic orbit, then $\{x_0,x_1,\ldots,x_{p-1}\}$ together with their external rays belong to $\bigcup_{k=0}^{d-1}\overline{U}_k$.

For $c_0$ is satellite parabolic parameter, the length $p$ of parabolic orbit is a proper factor of $n$ and $f_{c_0}$ acts on the rays of the orbit transitively. Then we have, in $\nu(\theta)=\overline{\nu_1\ldots\nu_{n-1}\star}$, $\nu_j=\nu_{j(\text{mod}) p}$ for $1\leq j\leq n-1$, that is,
$\nu(\theta)=\overline{u^{l-1}u_\star}$ where $u=\nu_1\ldots\nu_p$. By definition of kneading sequence, we can see
$\tau^{\circ(p-1)}(\theta)\in \bigl( (\theta+\nu_p-1)/d,\ (\theta+\nu_p)/d\bigr)$. It follows $x_0$ together with its external rays belong to $\overline{U}_{\nu_p}$. Then  $\tau^{n-1}(\theta)$ is either $(\theta+\nu_p-1)/d\ (\theta>\eta)$ or $(\theta+\nu_p)/d\ (\theta<\eta)$\ (see Figure \ref{fig9}). So the last digit of $d$-expansion of $\theta$ is either $\nu_p-1\ (\theta>\eta)$ or $\nu_p\ (\theta<\eta).$ Let $w=\nu_1\ldots\nu_t$ be the primitive root of $u$, then $u=w^{p/t}$.  We have $\overline{w^{s-1}w_\star}$ is the cyclic expression of $\nu(\theta)$ (proposition \ref{cyclic}) and $\nu_t=\nu_p$, so $\theta$ satisfies property $(1)$ and $(2)$.

\begin{figure}[htbp]

\centering
\includegraphics[width=14.5cm]{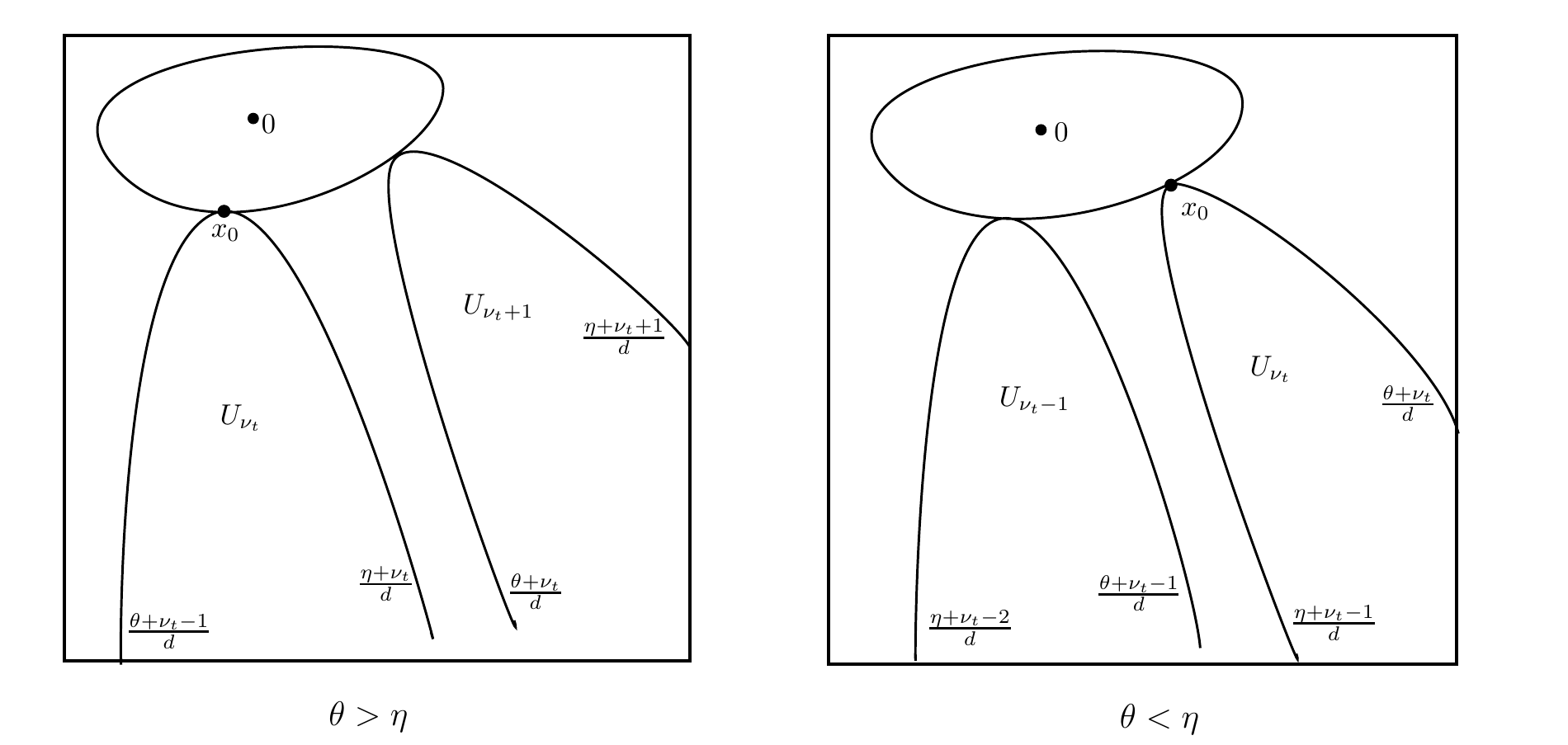}

 \caption{}\label{fig9}

\end{figure}

Furthermore, if $\theta$ is maximal in its orbit, then $\theta>\eta$, so the last digit of the period part of the $d$-expansion of $\theta$ must be $\nu_t-1$. By lemma \ref{key}, $\theta=.\overline{w^{s-1}\nu_1\ldots\nu_{t-1}(\nu_t-1)}$ and $0\leq\nu_t-1\leq d-2$. Note that the angles of external rays belonging to $x_1$ are $\theta,\ \tau^{p}(\theta),\ldots,\tau^{(s-1)p}(\theta)$ with the order $\theta>\tau^{p}(\theta)>\cdots>\tau^{(s-1)p}(\theta).$ The maximum of $\theta$ implies $\eta$ is the second largest angle in orbit of $\theta$, then $\eta=\tau^{p}(\theta)=.\overline{u^{l-2}\nu_1\ldots\nu_{p-1}(\nu_p-1)u}$. If $u$ is not primitive, then $p/t>1$. It follows
$\tau^{t}(\theta)>\tau^{p}(\theta)=\eta$, a contradiction to that $\eta$ is the second largest angle in orbit of $\theta$. So
$u$ is a primitive word and hence $t=p$ is length of parabolic orbit.

\end{proof}

 Then once $\theta$ doesn't satisfy the property in this lemma, we have $\gamma_{M_{d}}(\theta)$ is a primitive parabolic parameter. The lemma below can be seen as a application of lemma \ref{criterion}.

\begin{lemma}\label{apply}
Assume $\theta=.\overline{w^{s-1}\nu_1\ldots\nu_{t-1}(\nu_t-1)}$ is maximal in its orbit, where $w=\nu_1\ldots\nu_t$ is primitive with
$\nu_t\in[1,d-1]$ and $t$ is a proper factor of $n$ with $ts=n$. Let \[\beta_{v_t-i}=.\overline{w^{s-1}\nu_1\ldots\nu_{t-1}(\nu_t-i)}\ \text{ for } 2\leq i\leq \nu_t\]
\[\beta_{-1}=
\begin{cases}
.\overline{w^{s-1}\nu_1\ldots(\nu_{t-1}-1)(d-1)}&\mbox{as }t\geq 2 \\
.\overline{k\ldots k(k-1)(d-1)}&\mbox{as }t=1
\end{cases}
\]
Then $\gamma_{M_{d}}(\beta_{\nu_t-i})$ is a primitive parabolic parameter for any $2\leq i\leq \nu_t$. $\gamma_{M_{d}}(\beta_{-1})$ is a satellite parabolic parameter for $\theta=.\overline{(d-1)\cdots(d-1)(d-2)}$ and a primitive
parabolic parameter for any other case.
\end{lemma}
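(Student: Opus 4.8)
The plan is to derive everything from Lemma \ref{criterion} via its contrapositive, together with Lemma \ref{key}, Proposition \ref{cyclic}, and one explicit kneading-sequence computation. The common principle is that, since $\beta_{\nu_{t}-i}$ and $\beta_{-1}$ are periodic angles of exact period $n$, each of $\gamma_{M_{d}}(\beta_{\nu_{t}-i})$, $\gamma_{M_{d}}(\beta_{-1})$ is the root or a co-root of a period-$n$ hyperbolic component; since co-roots and non-satellite roots are primitive parabolic parameters, to prove primitivity it suffices to exhibit a violation of condition $(1)$ or $(2)$ of Lemma \ref{criterion}. I will also repeatedly use the elementary rule for the kneading sequence of a periodic angle $\alpha=.\overline{a_{1}\cdots a_{n}}$: for $1\le k\le n-1$ one has $\nu_{k}(\alpha)=a_{k}$ when $\tau^{k}(\alpha)<\alpha$, while $\nu_{k}(\alpha)=a_{k}+1$ (and $\nu_{k}(\alpha)=0$ if $a_{k}=d-1$) when $\tau^{k}(\alpha)>\alpha$, and $\nu_{n}(\alpha)=\star$.

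\textbf{The angles $\beta_{\nu_{t}-i}$, $2\le i\le\nu_{t}$.} Their periodic $d$-expansions differ from that of $\theta$ only in the last digit, which is lowered from $\nu_{t}-1$ to $\nu_{t}-i\ge 0$. A short comparison of cyclic shifts shows that lowering the last digit of a periodic word maximal in its orbit (keeping it nonnegative) again yields a word maximal in its orbit, so $\beta_{\nu_{t}-i}$ is maximal in its orbit. Moreover $w^{s-1}\nu_{1}\cdots\nu_{t-1}(\nu_{t}-i)$ belongs to $L_{w^{s}}$ while $w^{s}$ is non-primitive (as $s\ge2$), so by Lemma \ref{primitive root2} this word is primitive and $\beta_{\nu_{t}-i}$ has exact period $n$. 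By Lemma \ref{key}, $\nu(\beta_{\nu_{t}-i})=\overline{w^{s-1}\nu_{1}\cdots\nu_{t-1}\star}=\overline{w^{s-1}w_{\star}}$; since $w$ is primitive and $t$ is a proper factor of $n$, this is (Proposition \ref{cyclic}) the cyclic expression of $\nu(\beta_{\nu_{t}-i})$, whose primitive root $w$ ends with $\nu_{t}$. But the last digit of the periodic part of the $d$-expansion of $\beta_{\nu_{t}-i}$ is $\nu_{t}-i\le\nu_{t}-2$, which is neither $\nu_{t}$ nor $\nu_{t}-1$, so condition $(2)$ of Lemma \ref{criterion} fails; hence $\gamma_{M_{d}}(\beta_{\nu_{t}-i})$ is a primitive parabolic parameter.

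\textbf{The angle $\beta_{-1}$.} A preliminary observation is that $\nu_{t}\le d-2$ whenever $t\ge2$: if $\nu_{t}=d-1$, then comparing with the periodic part of $\theta$ the cyclic shift of it that starts at a copy of $\nu_{t}$ forces, by maximality, $\nu_{1}=\cdots=\nu_{t-1}=d-1$ as well, i.e. $w=(d-1)^{t}$, contradicting primitivity of $w$ for $t\ge2$; and for $t=1$ the case $\nu_{t}=d-1$ is exactly the excluded special case. The key computation is $\nu(\beta_{-1})=\nu(\theta)=\overline{w^{s-1}w_{\star}}$: the periodic $d$-expansion of $\beta_{-1}$ is obtained from that of $\theta$ by lowering the $(n-1)$st digit by one and setting the $n$th digit to $d-1$, and using the rule above together with the maximality of $\theta$ one checks, case by case, that $\tau^{k}(\beta_{-1})<\beta_{-1}$ for $1\le k\le n-2$ and $\tau^{n-1}(\beta_{-1})>\beta_{-1}$, which gives $\nu_{k}(\beta_{-1})=\nu_{k}(\theta)$ for all $1\le k\le n-1$. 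Since the $\star$-periodic sequence $\overline{w^{s-1}w_{\star}}$ has minimal period $n$, $\beta_{-1}$ also has exact period $n$. Thus in every non-special case the primitive root $w$ of the cyclic expression of $\nu(\beta_{-1})$ ends with $\nu_{t}\le d-2$, whereas the last digit of the periodic part of the $d$-expansion of $\beta_{-1}$ is $d-1$, which is neither $\nu_{t}$ nor $\nu_{t}-1$; condition $(2)$ of Lemma \ref{criterion} fails and $\gamma_{M_{d}}(\beta_{-1})$ is primitive. In the special case $\theta=.\overline{(d-1)\cdots(d-1)(d-2)}$ one computes directly $\beta_{-1}=\tau(\theta)$; moreover $c_{0}:=\gamma_{M_{d}}(\theta)$ lies on the boundary of the main hyperbolic component of $M_{d}$ (it is the root of its period-$n$ satellite bulb of rotation number $(n-1)/n$ with largest characteristic angles — a standard fact about the main cardioid, see \cite{DH}, \cite{Mil}), so the characteristic parabolic point of $f_{c_{0}}$ is a fixed point. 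Then $R_{c_{0}}(\theta)$ and $R_{c_{0}}(\tau\theta)=R_{c_{0}}(\beta_{-1})$ land at this same fixed point, so $\theta$ and $\beta_{-1}$ are the two characteristic angles of $c_{0}$; hence $\gamma_{M_{d}}(\beta_{-1})=c_{0}$ is a satellite parabolic parameter (its parabolic orbit has period $1<n$).

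\textbf{Main obstacle.} The delicate step is the identification $\nu(\beta_{-1})=\overline{w^{s-1}w_{\star}}$: unlike $\beta_{\nu_{t}-i}$, the angle $\beta_{-1}$ is not maximal in its orbit, so Lemma \ref{key} does not apply and one must push through the full case analysis of the comparisons $\tau^{k}(\beta_{-1})$ versus $\beta_{-1}$, using the maximality of $\theta$ at each stage. The only other point needing input beyond kneading-sequence arithmetic is the identification, in the special case, of $c_{0}$ with a satellite root of the main cardioid of $M_{d}$.
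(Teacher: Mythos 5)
Your proposal follows essentially the same route as the paper's proof: use Lemma \ref{key} (with maximality) to identify the kneading sequences of $\beta_{\nu_t-i}$ and $\beta_{-1}$, observe the cyclic expression is $\overline{w^{s-1}w_\star}$ by primitivity of $w$ (Proposition \ref{cyclic}), and apply the contrapositive of Lemma \ref{criterion} to conclude primitivity, falling back on the main-cardioid satellite identification in the special case $\theta=.\overline{(d-1)\cdots(d-1)(d-2)}$ (which the paper isolates as its Lemma \ref{3.6}). The only cosmetic differences are that you invoke condition (2) of Lemma \ref{criterion} where the paper invokes condition (3), and that you make explicit two points the paper leaves tacit — that $\beta_{\nu_t-i}$ and $\beta_{-1}$ have exact period $n$ (via Lemma \ref{primitive root2}) and that computing $\nu(\beta_{-1})$ requires a genuine shift-by-shift comparison since $\beta_{-1}$ is not maximal in its orbit; both are welcome clarifications rather than departures.
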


\begin{proof}
Let $\beta=.\overline{w^{s-1}\nu_1\ldots\nu_{t-1}j}$ be any angle among $\{\beta_{\nu_t-i}\}_{2\leq i\leq \nu_t}$, then
$0\leq j\leq \nu_t-2$. The maximum of $\theta$ implies  the maximum of $\beta$ in its orbit. Since $w$ is primitive, by lemma \ref{key}, we have $\overline{w^{s-1}w_\star}$ is the cyclic expression of $\nu(\beta)$.
As $j\leq \nu_t-2<\nu_t-1$, with the maximum of $\beta$, the property $(3)$ in lemma \ref{criterion} is not satisfied.
So $\gamma_{M_{d}}(\beta)$ is a primitive parabolic parameter.

For $\beta_{-1}$, the maximum of $\theta$ implies $\beta_{-1}$ is greater than $\tau(\beta_{-1}),\ \tau^{2}(\beta_{-1}),\ldots, \tau^{n-2}(\beta_{-1})$ but less than $\tau^{n-1}(\beta_{-1})$. It follows
$\nu(\beta)=\begin{cases}
\overline{w^{s-1}\nu_1\ldots\nu_{t-1}\star}&\mbox{as }t\geq 2 \\
\overline{k\ldots k k\star}&\mbox{as }t=1
\end{cases}=\overline{w^{s-1}w_\star}$.
It is  the cyclic expression of $\nu(\beta)$,  then if $\beta$
satisfies the property in lemma \ref{criterion}, $\nu_t$ is either $0$ or $d-1$. Since $1\leq \nu_t\leq d-1$, we have
$\nu_t$ must be $d-1$, then the maximum of $\theta$ implies $\theta=.\overline{(d-1)\cdots(d-1)(d-2)}$. So $\gamma_{M_{d}}(\beta_{-1})$ is a primitive parabolic parameter as long as $\theta\neq.\overline{(d-1)\cdots(d-1)(d-2)}$.
In the case of $\theta=.\overline{(d-1)\cdots(d-1)(d-2)}$, we will see in lemma \ref{3.6} that $\gamma_{M_{d}}(\theta)$ is the root of a hyperbolic component attached to the main cardioid and $\beta_{-1}$ is the
companion angle of $\theta$. In this case, $\gamma_{M_{d}}(\beta_{-1})$ is a satellite parabolic parameter.

\end{proof}

{\noindent\bf Remark.} In this lemma, we distinguish $\beta_{-1}$ according to whether $t\geq 2$ or $t=1$. It is because that we don't find a uniform expression of $\beta_{-1}$ for the two cases rather than the case of $t=1$ is special.

\subsection{Itineraries outside the Multibrot set}

\begin{figure}[htbp]

\centering
\includegraphics[width=13.5cm]{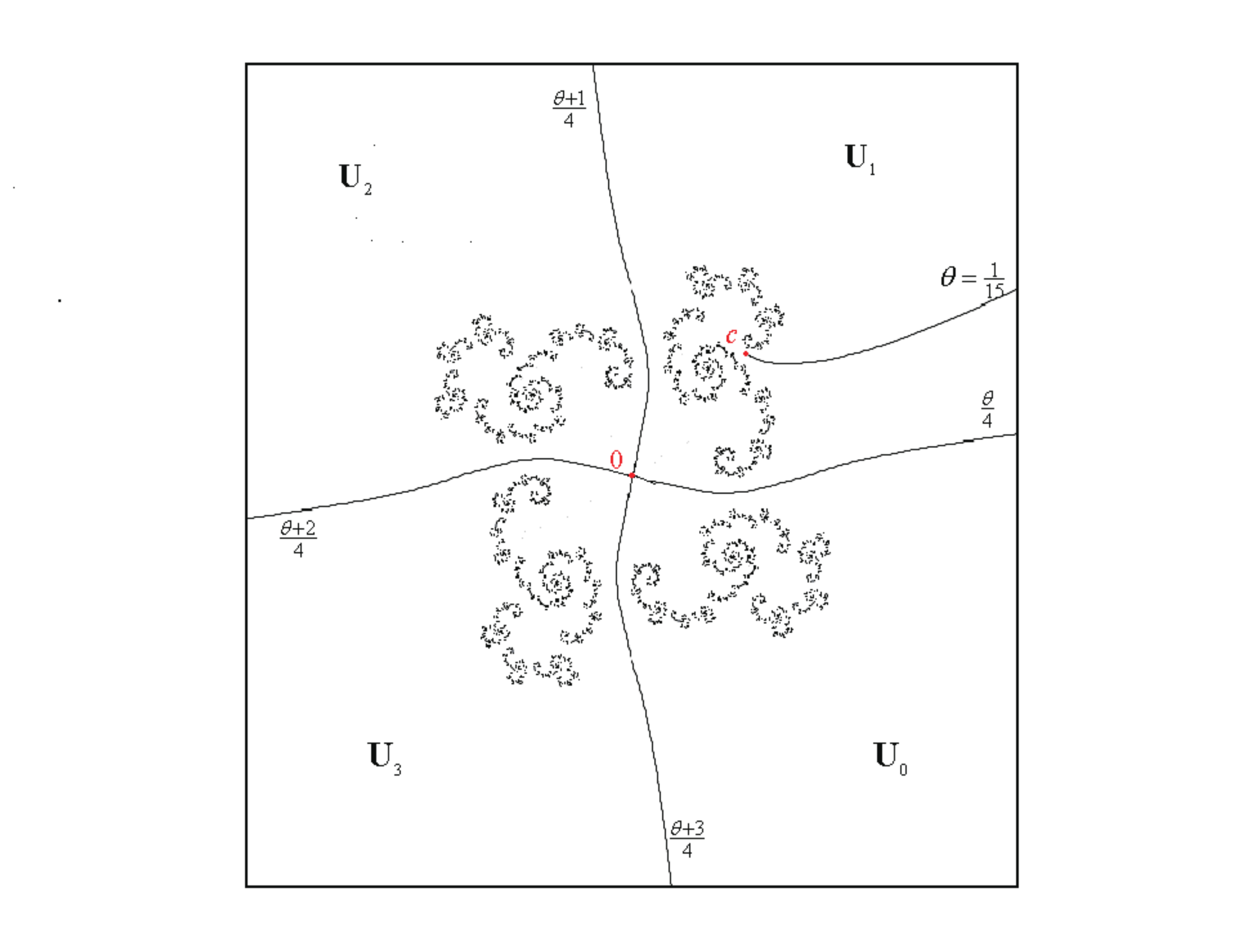}
\caption{The regions $U_0,\ U_1,\ U_2,\ U_3$ for a parameter $c$ belonging to $R_{M_4}(1/15)$.} \label{fig8}
\end{figure}

If $c\in \C\smm M_{d}$, the Julia set of $f_c$ is a Cantor set. If $c\in R_{M_{d}}(\theta)$ with $\theta\neq 0$ not necessarily periodic, then the dynamical rays $R_c(\theta/d)\ldots R_c\bigl((\theta+d-1)/d\bigr)$ bifurcate on the critical point. The set $R_c(\theta/d)\cup \ldots \cup R_c\bigl((\theta+d-1)/d\bigr)\cup \{0\}$ separates the complex plane in $d$ connected components. We denote by $U_0$ the component containing the dynamical ray $R_c(0)$ and by $U_1,\ldots,U_{d-1}$ the other component in counterclockwise (see Figure \ref{fig8}).

The orbit of a point $x\in K_c$ has an itinerary with respect to this partition. In other words, to each $x\in K_c$, we can associate a sequence $\iota_c(x)\in \{0,1,\ldots d-1\}^{\N}$ whose $j$-th term is equal to $k$ if $f_c^{\circ j-1}(x)\in U_k$ . A point $x\in K_c$ is periodic for $f_c$ if and only if the itinerary $\iota_c(x)$ is periodic for the shift with the same period.

The map $\iota_c:K_c\to \{0,1,\ldots d-1\}^{\N}$ is a bijection. In particular, for each itinerary $\iota\in \{0,\ldots,d-1\}^{\N}$ and each $c\in \C\smm\bigl(M_{d}\cup R_{M_{d}}(0)\bigr)$, there is a unique point $x(\iota,c)\in K_c$ whose itinerary is $\iota$. For a given $\iota\in \{0,\ldots,d-1\}^{\N}$, the map
$\C\smm\bigl(M_{d}\cup R_{M_{d}}(0)\bigr)\longrightarrow \C\quad c \mapsto x(\iota,c)\in \C$
is continuous, and even holomorphic (as can be seen by applying the Implicit Function Theorem).

\begin{proposition}\label{permuting1}
Let $\overline{\ep_1\ldots\ep_{n-1}\star}$ be the kneading sequence of a periodic angle $\theta$ with period $n\geq2$. If $c_{0}:=\gamma_{M_{d}}(\theta)$ is a primitive parabolic parameter and if one follows continuously the periodic points of period $n$ of $f_c$ as $c$ makes a small turn around $c_0$, then the periodic points with itineraries
$\overline{\ep_1\ldots\ep_{n-1}k}$ and $\overline{\ep_1\ldots\ep_{n-1}(k+1)}$ get exchanged where $k\in\Z_{d}$
is the last digit of the period part of the $d$-expansion of $\theta$.
\end{proposition}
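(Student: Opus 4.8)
The plan is to track the parabolic point $x_1$ of $f_{c_0}$ and its preimages, and to use the description of the regions $U_k$ near a primitive parabolic parameter given just before the statement. Recall that when $c_0 = \gamma_{M_d}(\theta)$ is a primitive parabolic parameter, the parabolic orbit $\{x_0,\dots,x_{p-1}\}$ of $f_{c_0}$ has period exactly $n$ (so $p=n$) and the dynamical ray (or rays) of angle $\theta$ (and its companion, if $c_0$ is a root rather than a co-root) land at $x_1$. For $c$ just outside $M_d$ and close to $c_0$ along (or near) the parameter ray $R_{M_d}(\theta)$, Lemma \ref{near parabolic} says the parabolic orbit splits into a pair of nearby periodic orbits of $f_c$, both of length $n$. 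The first step is to read off the itineraries of these two orbits with respect to the partition $\{U_0,\dots,U_{d-1}\}$ coming from the bifurcating rays $R_c(\theta/d),\dots,R_c((\theta+d-1)/d)$: since the orbit of $\theta$ under $\tau$ has the same itinerary relative to the partition of $\T$ by the points $(\theta+j)/d$ as it does for the kneading-sequence partition (this is exactly the mechanism in the proof of Lemma \ref{key}), the points $\gamma_c(\tau^{j}(\theta))$ lying near $x_{j+1}$ have itinerary $\overline{\ep_1\dots\ep_{n-1}\ep_n}$ where $\overline{\ep_1\dots\ep_n}$ is the periodic part of the $d$-expansion of $\theta$, and the companion orbit differs only in that $x_0$'s successor slot switches between the two regions $U_k$ and $U_{k+1}$ adjacent to the relevant preimage of $x_1$ — here $k = \ep_n$ is the last digit of the periodic part. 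So the two split orbits carry precisely the itineraries $\overline{\ep_1\dots\ep_{n-1}k}$ and $\overline{\ep_1\dots\ep_{n-1}(k+1)}$.

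The second step is the monodromy argument. As $c$ makes a small loop around $c_0$ in parameter space, follow the $n$ periodic points of period $n$ of $f_c$ continuously; this is legitimate away from $c_0$ because at a primitive parabolic parameter the curve $\overline{X_n}$ is smooth and, by Case 2 of the proof of Theorem \ref{Smooth}, locally a graph $c\mapsto$ (orbit) branched of order two over $z_0$ — equivalently, $\partial P_n/\partial z$ vanishes to first order and $\partial P_n/\partial c\ne 0$ at $(c_0,z_0)$. Hence exactly two of the period-$n$ points collide at $c_0$ (the two points of the split orbits sitting near a single parabolic point $x_j$), and going once around $c_0$ transposes them while fixing all the others. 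Combining with Step 1: the two colliding points are exactly the ones whose itineraries are $\overline{\ep_1\dots\ep_{n-1}k}$ and $\overline{\ep_1\dots\ep_{n-1}(k+1)}$, and the loop exchanges them.

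To make Step 1 rigorous I would use the holomorphic dependence of $x(\iota,c)$ on $c$ for $c\in\C\smm(M_d\cup R_{M_d}(0))$ stated above: each of the two split periodic orbits is some $x(\iota,c)$ with $\iota$ one of the two candidate itineraries, and these itineraries are forced by the location of the orbit relative to the bifurcated rays, which in turn is governed by where $\tau^{n-1}(\theta)$ sits among $\theta/d,\dots,(\theta+d-1)/d$ — namely strictly between $(\theta+k)/d$ and $(\theta+k+1)/d$ when $k=\ep_n$, with the two orbits straddling the preimage of $x_1$ landing in the crack. The main obstacle is precisely this bookkeeping: one must verify that in the primitive case $x_0$ (equivalently the relevant preimage of $x_1$) genuinely lies on the common boundary of $U_k$ and $U_{k+1}$, so that a perturbation can push the split orbit to either side, whereas all other points of the cycle stay strictly inside a single $U_j$; this uses that $V_1$ contains no point or ray of the parabolic orbit, as recalled in the structure discussion preceding Lemma \ref{criterion}. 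Once that adjacency is pinned down, the combinatorics and the degree-two branching of Case 2 give the transposition immediately.
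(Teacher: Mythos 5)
Your proposal follows the same overall architecture as the paper's proof (identify the degree-two branching of $\overline{X_n}$ over $c_0$ from Case~2 of the smoothness proof, then read off the itineraries of the two split orbits for a nearby $c$ on $R_{M_d}(\theta)$), so the monodromy part (Step~2) is correct and essentially identical to what the paper does. However, there is a genuine gap in Step~1, which is where the real work lies. You correctly identify what must be shown --- that for $j\in[1,n-1]$ the split points near $x_j$ stay in a single region $U_{\ep_j}(c)$ while the two points near $x_0$ fall on opposite sides of the bifurcating ray of angle $(\theta+k)/d$ --- but you do not give the argument. The paper establishes this via a Hausdorff-convergence argument: taking $c_m\to c_0$ along $R_{M_d}(\theta)$, one shows (using the Petersen--Ryd result \cite{PR}) that $R(c_m)\to R$ with $R\cap\partial K_{c_0}=\{x_1\}$ and $R$ otherwise contained in $\overline{V_1}(c_0)$, hence $S(c_m)\to S\subset\overline{U_\star}(c_0)$; this is what makes the partitions $U_j(c_m)$ stable enough for $m$ large to conclude the itinerary claims. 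That limiting argument is the technical heart of the proof, and "bookkeeping" undersells it --- it is not a routine verification.

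There is also a conceptual slip worth flagging. You justify the itinerary identification by appealing to Lemma~\ref{key} to equate the kneading sequence of $\theta$ with the periodic part of its $d$-expansion, and you write the split-orbit itineraries as $\overline{\ep_1\dots\ep_{n-1}\ep_n}$ with the $\ep_j$ being $d$-expansion digits. But Lemma~\ref{key} requires $\theta$ to be maximal in its orbit, which Proposition~\ref{permuting1} does not assume; the equality of the two sequences fails in general. In fact the appeal to Lemma~\ref{key} is unnecessary and misleading here: the $U_j$ partition is cut out by the bifurcating rays of angle $(\theta+j)/d$, which is \emph{exactly} the partition defining the kneading sequence, so the first $n-1$ itinerary symbols are the kneading digits $\ep_1,\dots,\ep_{n-1}$ directly. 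The $d$-expansion of $\theta$ enters only in pinning down the final symbol pair $\{k,k+1\}$, through the identity $\tau^{n-1}(\theta)=(\theta+k)/d$ and the fact that the corresponding ray lands at $x_0$, which therefore sits on the common boundary of $U_k$ and $U_{k+1}$. Keeping these two roles (kneading for symbols $1,\dots,n-1$; $d$-expansion for the last pair) separate would remove the spurious maximality hypothesis from your argument.
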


\begin{proof}
Since $c_0$ is  a primitive parabolic parameter,  then the periodic point $x_1:=\gamma_{c_0}(\theta)$ has period $n$ and multiplier $1$. According to Case $2$ in the proof of smoothness and lemma \ref{near parabolic}, the projection from a small neighborhood of $(c_0,x_1)$ in $X_n$ to the first coordinate is a degree $2$  covering. So the neighborhood of $(c_0,x_1)$ in $\overline{X_n}$ can be written as
\[\bigl\{(c_0+\delta^2,x(\delta)),(c_0+\delta^2,x(-\delta))\ \big|\ |\delta|<\varepsilon\bigr\}\]
where $x:(\C,0)\to (\C,x_1)$ is a holomorphic germ with $x'(0)\neq 0$. In particular, the pair of periodic points for $f_c$ which are splitted from $x_1$  get exchanged when $c$ makes a small turn around $c_0$.
So, using analytic continuation on $\C\setminus(M_d\cup R_{M_d}(0))$, it is enough to show that there exists a $c\in \C\smm M_{d}$ close to $c_0$ such that $x(\pm\sqrt{c-c_0})$ have itineraries $\overline{\ep_1\ldots\ep_{n-1}k}$ and $\overline{\ep_1\ldots\ep_{n-1}(k+1)}$ where $k\in\Z_{d}$
is the last digit of the period part of the $d$-expansion of $\theta$.

Let us denote by $V_0(c_0)$, $V_1(c_0)$, $U_0(c_0),\ldots,U_{d-1}(c_0)$ and $U_\star(c_0)$ the sets defined in the previous section. For $j\geq 0$, set $x_j:= f_{c_0}^{j}(x_0)$ and observe that for $j\in [1,n-1]$, we have $x_j\in U_{\ep_j}(c_0)$.

 For $c\in R_{M_{d}}(\theta)$, consider the following compact subsets of the Riemann sphere :
 \[R(c):= R_c(\theta)\cup\{c,\infty\}\quad\text{and}\quad S(c):=R_c(\theta/d)\cup\ldots\cup R_c\bigl((\theta+d-1)/d\bigr)\cup\{0,\infty\}.\]
Denote
by $U_0(c)$ the component of $\C\smm S(c)$ containing $R_c(0)$ and by $U_1(c),\ldots,U_{d-1}(c)$ the other component in
counterclockwise. From any sequence $\{c_m\}\subset R_{M_d}(\theta)$ converging to $c_0$, by extracting a subsequence if necessary,
we can assume $R(c_m)$ and $S(c_m)$ converge respectively,  for the Hausdorff topology on compact subsets of $\C\cup \{\infty\}$, to connected compact sets $R$ and $S$. Since $S(c)=f_c^{-1}\bigl(R(c)\bigr)$, we have $S=f_{c_0}^{-1}(R)$.
According to \cite[Section 2 and 3]{PR}, $R\cap (\C\smm K_{c_0}) = R_{c_0}(\theta)$,
 the intersection of $R$ with the boundary of $K_{c_0}$ is reduced to $\{x_1\}$ and the intersection of $R$ with the interior of $K_{c_0}$ is contained in the immediate basin of $x_1$, whence in $V_1$.
It follows  $R\subset \overline{V_{1}}(c_{0})$ and $S\subset \overline{U}_{\star}(c_0)$, that means  any compact subset of $\C\smm \overline U_\star(c_0)$ is contained in $\C\smm S(c_{m})$ for $m$ sufficiently large .

For $j\in [1,n-1]$ and let $D_j$ be a sufficiently small disk around $x_j$ so that
\[\overline D_j\subset U_{\ep_j}(c_0)\subset \C\smm \overline U_\star(c_0).\]  According to the previous discussion, if $m$ is sufficiently large, we have
\[f_{c_m}^{ j-1}\bigl(x(\pm\sqrt{c_m-c_0})\bigr)\subset D_j\subset U_{\ep_j}(c_{m}).\]
So the first $n-1$ symbols of the itineraries of $x(\pm\sqrt{c_m-c_0})$ are all $\epsilon_1,\ldots ,\epsilon_{n-1}.$
As $x(\sqrt{c_m-c_0})$ and $x(-\sqrt{c_m-c_0})$ are different $n$ periodic points of $f_{c_m}$,  their itineraries must be
different. It follows $f_{c_m}^{ n-1}\bigl(x(\pm\sqrt{c_m-c_0})\bigr)$, which are splitted  from $x_0$, lie in different component of $\C\setminus S(c_{m})$. Combining with the fact that $R_{c_0}\bigl((\theta+k)/d\bigr)$ lands on $x_0$ ($k$ is the last digit of the period part of the $d$-expansion of $\theta$),
we have $f_{c_m}^{ n-1}\bigl(x(\pm\sqrt{c_m-c_0})\bigr)$ belong to $U_k(c_m)$ and $U_{k+1}(c_m)$ respectively, then
$x(\pm\sqrt{c_m-c_0})$ have itineraries $\overline{\ep_1\ldots\ep_{n-1}k}$ and $\overline{\ep_1\ldots\ep_{n-1}(k+1)}$
respectively.

\end{proof}

\begin{lemma}\label{3.6}

For $\theta=1-1/(d^n-1)=.\overline{(d-1)\cdots (d-1)(d-2)}\ (n\geq2)$, we have $\gamma_{M_{d}}(\theta)$  is the root of some periodic $n$ hyperbolic component attached to the main cardioid. If $\eta$ is denoted the companion angle of $\theta$, then
$\eta=d\theta-d+1$.

\end{lemma}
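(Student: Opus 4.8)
The plan is not to exhibit $c_0:=\gamma_{M_d}(\theta)$ explicitly, but to pin it down via the arithmetic of its kneading sequence together with the material of Sections 3.3--3.5. First I would dispose of the routine arithmetic. From $\tfrac{1}{d^n-1}=.\overline{0\cdots01}$ in base $d$ one gets $\theta=\tfrac{d^n-2}{d^n-1}$, whose $d$-expansion is $.\overline{(d-1)^{n-1}(d-2)}$; hence $\theta$ has exact period $n$ under $\tau$ and is maximal in its orbit, because the cyclic shift beginning with the block $(d-1)^{n-1}$ is the strictly largest one. A one-line estimate gives $d-1\le d\theta<d$ for $n\ge 2$, so $\tau(\theta)=d\theta-(d-1)=d\theta-d+1$; thus the claimed identity $\eta=d\theta-d+1$ is \emph{equivalent} to ``$\eta=\tau(\theta)$''. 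By Lemma~\ref{key}, $\nu(\theta)=\overline{(d-1)^{n-1}\star}$, whose cyclic expression (unique, by Proposition~\ref{cyclic}) is $\overline{w^{s-1}w_\star}$ with $w=(d-1)$, $t=|w|=1$, $s=n$. It then remains to prove (i) that $c_0$ is the root of a period-$n$ hyperbolic component attached to the main cardioid, and (ii) that its companion angle is $\tau(\theta)$.

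Next I would show that $c_0$ is \emph{not} a primitive parabolic parameter, arguing by contradiction. If it were, Proposition~\ref{permuting1} would apply with $\overline{\ep_1\cdots\ep_{n-1}\star}=\overline{(d-1)^{n-1}\star}$ and with $k=d-2$ (the last digit of the periodic part of the $d$-expansion of $\theta$), and would assert that along a small loop of $c$ around $c_0$ the period-$n$ points of $f_c$ with itineraries $\overline{(d-1)^{n-1}(d-2)}$ and $\overline{(d-1)^{n-1}(d-1)}$ are interchanged. But $\overline{(d-1)^{n-1}(d-1)}=\overline{(d-1)}$ is the itinerary of a \emph{fixed} point of $f_c$, not of a period-$n$ point, which is absurd. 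Hence $c_0$ is not primitive parabolic. By the standard facts recalled in Section~3.3, a period-$n$ parameter ray lands at the root or at a co-root of a period-$n$ hyperbolic component $H$; co-roots are primitive parabolic parameters, so $c_0$ must be the root of $H$, and since $c_0$ is not primitive parabolic, $H$ is a satellite component. In particular $c_0$ has a companion angle $\eta$, and in the dynamical plane of $f_{c_0}$ the rays $R_{c_0}(\theta)$ and $R_{c_0}(\eta)$ land on one common point of the parabolic orbit.

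Then I would invoke Lemma~\ref{criterion}: since $c_0$ is the root of a satellite component and $\theta$ is maximal in its orbit, part~(3) gives that the parabolic orbit of $f_{c_0}$ has length $t=1$, i.e.\ it is a parabolic fixed point (with multiplier a primitive $n$-th root of unity, as the $n$ distinct angles $\tau^j(\theta)$ all land at it); consequently $H$ is attached to a hyperbolic component of period $1$, namely the main cardioid, which is (i). For (ii), observe that the $d$-expansion of $\tau^{j}(\theta)$ is the $j$-th cyclic shift of $(d-1)^{n-1}(d-2)$, and each shift moves the single digit $d-2$ one place to the left, so $\theta>\tau(\theta)>\tau^2(\theta)>\cdots>\tau^{n-1}(\theta)$; the orbit of $\theta$ is thus totally ordered by $\tau$. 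Since the proof of Lemma~\ref{criterion} shows that for $\theta$ maximal the companion angle $\eta$ is the second-largest angle in the orbit of $\theta$, we get $\eta=\tau(\theta)=d\theta-d+1$, which is (ii).

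I expect the only delicate point to be the logical hinge of the second step: to apply Lemma~\ref{criterion} one must first know that $c_0$ is a satellite root, and this rests on (a) the classification of period-$n$ parameter-ray landing points as roots or co-roots of period-$n$ components and (b) the fact that co-roots are primitive parabolic parameters — both recalled in Section~3.3, so the argument stays within the combinatorial framework already built. A route to (ii) bypassing Lemma~\ref{criterion} would be to check by induction that $\tau^{j}(\theta)=\tfrac{d^n-1-d^{j}}{d^n-1}$ for $0\le j\le n-1$, deduce that $\tau$ permutes the $n$ rays at the parabolic fixed point as the cyclic rotation of rotation number $(n-1)/n$, and read off from the standard orbit-portrait description of the characteristic (companion) angles of a fixed-point portrait that they are exactly $\theta$ and $\tau(\theta)$.
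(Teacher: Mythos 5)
Your proof is correct, and for the second assertion ($\eta=d\theta-d+1$) it is genuinely different from the paper's. For the first assertion (that $c_0$ is the root of a period-$n$ satellite component attached to the main cardioid), both you and the paper rest on the same key contradiction — applying Proposition~\ref{permuting1} to exhibit a would-be period-$n$ point with itinerary $\overline{(d-1)\cdots(d-1)(d-1)}$, which is the fixed-point itinerary — together with part~(3) of Lemma~\ref{criterion}; you simply package the logic more directly (show ``not primitive'' first, deduce ``satellite root'', then use criterion~(3) to get $t=1$), whereas the paper argues by contradiction from the hypothesis $x_0\neq x_1$, deduces ``primitive'' via the contrapositive of criterion~(3), and only then invokes permuting1. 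These are the same idea in two packagings, and yours is arguably cleaner because the contradiction step does not actually use the hypothesis $x_0\neq x_1$ beyond deriving primitivity.

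For the companion-angle computation the routes really diverge. The paper works in the dynamical plane of $f_{c_0}$: it identifies the boundary rays $R_{c_0}\bigl((\theta+d-2)/d\bigr)$ and $R_{c_0}\bigl((\eta+d-1)/d\bigr)$ of $U_{d-1}$, uses $\nu(\theta)=\overline{(d-1)\cdots(d-1)\star}$ to place $R_{c_0}(\theta)\subset\overline{U}_{d-1}$ (giving $\theta\le(\eta+d-1)/d$), and then uses that $(\eta+d-1)/d$ lies in the orbit of $\theta$ and that $\theta$ is maximal to force equality. You instead argue arithmetically that the orbit is totally ordered, $\theta>\tau(\theta)>\cdots>\tau^{n-1}(\theta)$, and combine this with the fact (established inside the proof of Lemma~\ref{criterion}) that for $\theta$ maximal the companion angle is the second-largest element of the orbit, hence $\eta=\tau(\theta)=d\theta-d+1$. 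This avoids all the ray geometry and is shorter; it does lean on a step from the proof of Lemma~\ref{criterion} rather than only its statement, but you note a self-contained alternative via the explicit formula $\tau^j(\theta)=(d^n-1-d^j)/(d^n-1)$ and the shortest-arc characterization of the characteristic angles, which would make the argument fully independent of how criterion~(3) was proved.
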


\begin{proof}

Let $c_0:=\gamma_{M_d}(\theta)$, then $x_1:=\gamma_{c_{0}}(\theta)$ is the parabolic periodic point of $f_{c_0}$ as previous. By lemma \ref{key}, $\nu(\theta)=\overline{(d-1)\cdots(d-1)\star}$, so $\overline{(d-1)\cdots(d-1)\star}$ is the cyclic expression of $\nu(\theta)$. If $x_0\neq x_1$, then the length of parabolic orbit is greater than $1$. It implies the property $(3)$ in lemma \ref{criterion} is not satisfied, so $c_0$ is a primitive parabolic parameter. According to proposition\ref{permuting1}, when $c\in \C\setminus M_d $ is close to $c_0$,
      $x_1$ splits into two $n$ periodic point $y,z$ of $f_c$ with itineraries $\overline{(d-1)\cdots(d-1)(d-2)}$ and $\overline{(d-1)\cdots(d-1)(d-1)}$. It leads to a contradiction to the period $n$ of $y$ and $z$. So $x_0=x_1$ and then
      $c_0$ is the  root of some periodic $n$ satellite hyperbolic component attached to the main cardioid.

 By the maximum of $\theta$, we have $U_{d-1}$ is bounded by $R_{c_0}\bigl((\theta+d-2)/d\bigr)$ and $R_{c_0}\bigl((\eta+d-1)/d\bigr)$. $\nu(\theta)=\overline{(d-1)\cdots(d-1)\star}$ implies $R_{c_0}(\theta)\subset \overline{U}_{d-1}$,
then $\theta\leq (\eta+d-1)/d$ and $x_0$ is on the boundary of $U_{d-1}$. On the other hand, $(\eta+d-1)/d$ is in the
orbit of $\theta$, so $\theta\geq(\eta+d-1)/d$. Then we have $\eta=d\theta-d+1$.

\end{proof}

{\noindent\bf Remark.} The dynamical rays $R_{c_0}(\theta)$ and $R_{c_0}(\eta)$ are consecutive among the rays landing at $x_0$.
Lemma \ref{3.6} implies  $R_{c_0}(\theta)$ is mapped to $R_{c_0}(\eta)$. It follows that each dynamical ray landing at
$x_0$ is mapped to the one which is once further clockwise.\\

\begin{proposition}\label{permuting2}
Let $\theta=1-1/(d^n-1)=.\overline{(d-1)\cdots(d-1)(d-2)}$ be periodic with period $n\geq 2$. If one follows continuously the periodic points of period $n$ of $f_c$ as $c$ makes a small turn around $\gamma_{M_{d}}(\theta)$, then the periodic points in the cycle of $\iota_c^{-1}(\overline{(d-1)\cdots(d-1)(d-2)})$ get permuted cyclically.
\end{proposition}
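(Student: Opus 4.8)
The plan is to show that the projection $\pi\colon\overline{X_n}\to\C$, $(c,z)\mapsto c$, is $n$-to-$1$ and totally branched at the point $(c_0,x_0)$, where $c_0:=\gamma_{M_d}(\theta)$ and $x_0:=\gamma_{c_0}(\theta)$. Granting this, a small loop of $c$ around $c_0$ induces a transitive cyclic permutation of the $n$ points of $\pi^{-1}(c)$ lying near $x_0$; it then remains only to recognise these $n$ points as the cycle of $\iota_c^{-1}\big(\overline{(d-1)\cdots(d-1)(d-2)}\big)$.

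First I would record the local structure at $c_0$. By Lemma \ref{3.6}, $c_0$ is the root of a period-$n$ hyperbolic component attached to the main cardioid, so $x_0$ is a parabolic \emph{fixed} point of $f_{c_0}$ whose multiplier is a primitive $n$-th root of unity; this is Case 3 of the proof of Theorem \ref{Smooth} with $m=1$, $v=n$, and Theorem \ref{Smooth} tells us that $(c_0,x_0)\in\partial X_n$ and that near $(c_0,x_0)$ the set $\overline{X_n}$ is the graph of a holomorphic germ $c(z)$ with $c(x_0)=c_0$, $c'(x_0)=0$. Using the factorisation $f_c^{\circ n}(z)-z=(f_c(z)-z)\,P(c,z)$ of \Ref{eq: CRD} (with $m=1$, $s=n$) and the fact that near $(c_0,x_0)$ the zero locus of $P$ is exactly that graph, one gets
\[\on{ord}_{x_0}\big(c(z)-c_0\big)=\on{ord}_{x_0}P(c_0,z)=\on{ord}_{x_0}\big(f_{c_0}^{\circ n}(z)-z\big)-1 .\]

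The heart of the matter is then the identity $\on{ord}_{x_0}\big(f_{c_0}^{\circ n}(z)-z\big)=n+1$. By Lemma \ref{near parabolic} (satellite case, $p=1$), a small perturbation of $c_0$ splits the parabolic fixed point into a fixed point of $f_c$ together with exactly one orbit of period $n$, and creates nothing else near $x_0$. Taking $c$ near but distinct from $c_0$ and outside the two discrete sets where that fixed point has multiplier an $n$-th root of unity, respectively where that period-$n$ orbit has multiplier $1$ (finiteness of parabolic parameters of bounded period, as in the proof of Lemma \ref{111}), we get exactly $1+n$ simple zeros of $f_c^{\circ n}(z)-z$ in a fixed small disk $D$ centred at $x_0$; by the argument principle this count is independent of $c$ near $c_0$, and at $c=c_0$ it equals $\on{ord}_{x_0}\big(f_{c_0}^{\circ n}(z)-z\big)$. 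Hence that order is $n+1$, so $c(z)-c_0=a(z-x_0)^n+\cdots$ with $a\neq0$, $\pi$ is totally branched of degree $n$ at $(c_0,x_0)$, and the monodromy along a small loop around $c_0$ is a transitive $n$-cycle on the $n$ points of $\pi^{-1}(c)$ near $x_0$.

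Finally I would identify these $n$ points. They form a single period-$n$ orbit of $f_c$ by Lemma \ref{near parabolic}; running the analytic-continuation and ray-convergence argument of the proof of Proposition \ref{permuting1} — now with the $n$-fold branched cover above in place of the degree-$2$ cover of Case 2, and with the regions $U_\star(c),U_0(c),\dots,U_{d-1}(c)$ of the satellite configuration at $c_0$ — shows that their common itinerary-word has first $n-1$ symbols equal to the kneading digits $\nu_1=\dots=\nu_{n-1}=d-1$ of $\nu(\theta)=\overline{(d-1)\cdots(d-1)\star}$ (Lemma \ref{key}) and last symbol in $\{d-2,d-1\}$. Since $\overline{(d-1)\cdots(d-1)(d-1)}$ has period $1\neq n$, the last symbol is forced to be $d-2$, the last digit of the $d$-expansion of $\theta$; so the $n$ points are exactly the cycle of $\iota_c^{-1}\big(\overline{(d-1)\cdots(d-1)(d-2)}\big)$, cyclically permuted. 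I expect the main obstacle to be the exact order $n+1$ above, i.e.\ ruling out that a further orbit merges into $x_0$ and controlling the multiplicities in the argument-principle step; the final itinerary computation is routine given its analogue for Proposition \ref{permuting1}.
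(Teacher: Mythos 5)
Your overall strategy coincides with the paper's: establish that the projection $\pi\colon\overline{X_n}\to\C$ is a totally branched degree-$n$ covering at $(c_0,x_0)$, so that a small loop of $c$ around $c_0$ induces a transitive $n$-cycle, and then identify those $n$ points as the cycle of $\iota_c^{-1}\bigl(\overline{(d-1)\cdots(d-1)(d-2)}\bigr)$. The argument-principle justification of the degree-$n$ branching is a slightly more explicit version of what the paper does (it cites Case~3 of Theorem~\ref{Smooth} together with Lemma~\ref{near parabolic}), and is fine.

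The genuine gap is in the identification step, which you dismiss as ``routine given its analogue for Proposition~\ref{permuting1}.'' It is precisely the non-routine part, and the adaptation you invoke breaks down. In Proposition~\ref{permuting1} the parabolic orbit $\{x_0,\ldots,x_{n-1}\}$ consists of $n$ \emph{distinct} points, and for $j\ge1$ one has $x_j$ in the \emph{interior} of $U_{\ep_j}(c_0)\subset\C\smm\overline{U_\star(c_0)}$; this lets one choose small disks $D_j\subset U_{\ep_j}(c_0)$ around the $x_j$ and read off the first $n-1$ itinerary symbols of the nearby $n$-periodic points. In the present situation the parabolic orbit has collapsed to the single fixed point $x_0=x_1=\cdots$, and $x_0$ lies \emph{on the boundary} of $U_\star(c_0)$ (it is the common landing point of $R_{c_0}(\theta),R_{c_0}(\eta)$, hence a limit point of $\partial U_\star(c_0)=f_{c_0}^{-1}\bigl(\partial V_1(c_0)\bigr)$). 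There is no disk around $x_0$ contained in a single region $U_k(c_0)$, so the convergence $S(c_m)\to S\subset\overline{U_\star(c_0)}$ no longer pins down in which $U_k(c_m)$ the iterates of the nearby $n$-cycle lie; the periodic points near $x_0$ sit in various sectors bordering $x_0$ and their symbols must be determined by an actual sector analysis. The paper performs exactly this analysis, in the guise of Claims~1 and~2 of its proof: it shows instead that the limit $z=\lim_j\iota_{c_j}^{-1}\bigl(\overline{(d-1)\cdots(d-1)(d-2)}\bigr)$ cannot be a repelling $n$-periodic point, using the uniqueness of the external ray at such a $z$ (a rational-angle computation, Claim~1) and the clockwise rotation of sectors at $x_0$ from the Remark after Lemma~\ref{3.6} (Claim~2). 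That work is where the content of the proposition lives, and your proposal does not reproduce it. Your closing observation that a constant last symbol $d-1$ would force period $1$ is correct and useful, but it only finishes the argument once the first $n-1$ symbols and the set $\{d-2,d-1\}$ for the last are actually established, which is the missing step.
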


\begin{figure}[htbp]

\centering
\includegraphics[width=14.5cm]{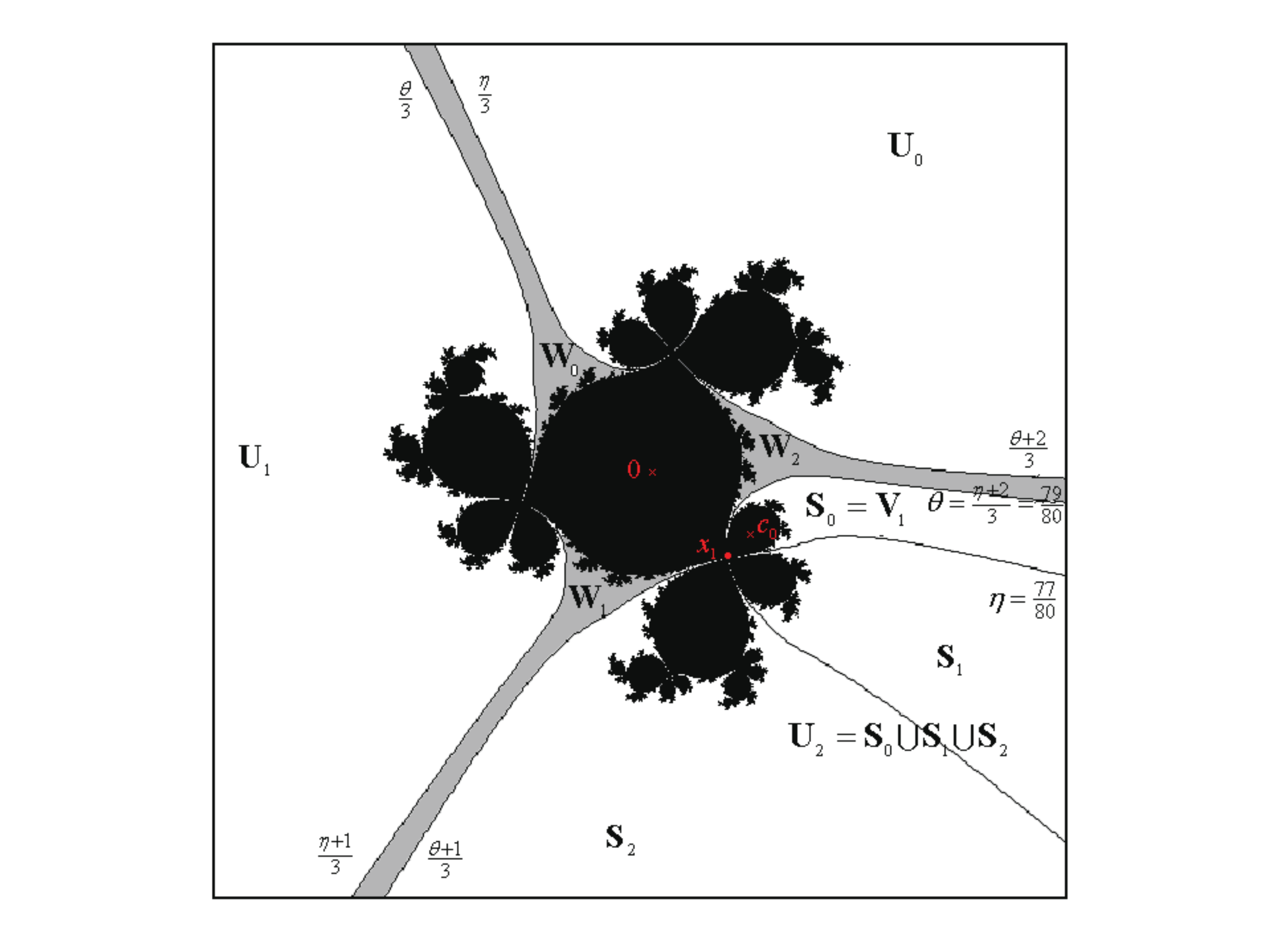}
\caption{The dynamical plane of $f_{c_0}$. $c_0:=\gamma_{M_3}(\theta)$ with $\theta=.\overline{2221}$ } \label{fig10}
\end{figure}

\begin{proof}
Set $c_0:=\gamma_{M_d}(\theta)$. By Lemma \ref{3.6}, all the dynamical rays
$R_{c_0}\bigl(\tau^j(\theta)\bigr)$ land on a common fixed point $x_0$. This fixed point is parabolic and the companion angle of $\theta$, denoted by $\eta$, equals to $d\theta-(d-1)\equiv d\theta (\text{mod}\ \Z)$. $V_1(c_0)\subset U_{d-1}(c_0)$\ which is bounded by $R_{c_0}\bigl((\theta+d-2)/d\bigr)$\ and $R_{c_0}(\theta)$.

According to Case $3$ in the proof of smoothness and lemma \ref{near parabolic}, we have the projection from a small neighborhood of $(c_0,x_0)$ in $X_n$ to the parameter plane is a degree $n$ covering. Then the  neighborhood of $(c_0,x_0)$
in $\overline{X_n}$ can be written as
\[\bigl\{(c_0+\delta^n,x(\delta)),(c_0+\delta^n,x(\omega\delta)),\ldots,(c_0+\delta^n,x(\omega^{n-1}\delta))\ \big|\ |\delta|<\varepsilon\bigr\}\]
where $x:(\C,0)\to (\C,x_0)$ is a holomorphic germ satisfying $x'(0)\neq 0$.
So, for $c$ close to $c_0$, the set $x\{\sqrt[n]{c-c_0})\}$ is a cycle of period $n$ of $f_c$, and
when $c$ makes a small turn around $c_0$, the periodic points in the cycle $x\{\sqrt[n]{c-c_0})\}$ get permuted cyclically. So, combining with analytic continuation on $\C\setminus(M_d\cup R_{M_d}(0))$, it is enough to show there exists a $c\in \C\smm M_d$ close enough to $c_0$ such that the point $\iota_c^{-1}(\overline{(d-1)\cdots(d-1)(d-2)})$ belongs to $x\{\sqrt[n]{c-c_0}\}$.
Equivalently, we must show that there is a sequence $\{c_j\}\subset \C\smm M_d$ converging to $c_0$, such that the periodic point $y_j:=\iota_{c_j}^{-1}(\overline{(d-1)\cdots(d-1)(d-2)})$  converges to $x_0$.

Let $\{c_j\}\subset R_{M_d}(\theta)$ converge to $c_0$ as $j\rightarrow \infty$. Without loss of generality, we may assume that the sequence $y_j$ converges to a point $z$, $R(c_j)$\ converges to $R$\ and $S(c_j) $ converges to $S $ in Hausdoff topology. The definition of
$R(c)$, $S(c)$, $U_0(c),\ldots,U_{d-1}(c)$ are in the proof of proposition \ref{permuting1}. As $(c_0, z)$ is on  $\overline{X_n}$,
then $z $ is either the parabolic fixed point or repelling $n $ periodic point of $f_{c_0}$.

Suppose $z $ is
a repelling $n $ periodic point, set $z_i:=f_{c_0}^{i}(z)$. Now we will define a new sequence of open domain
$\bigl\{ W_k(c_0) \bigr\}$. $W_k(c_0) $ is  the connected component of $U_\star(c_0)\setminus$\ the closure of Fatou component
containing $0 $, adjacent with $U_k(c_0),U_{k+1}(c_0)$ (see Figure \ref{fig10}). According to \cite[Section 2 and 3]{PR}, $R\cap (\C\smm K_{c_0}) = R_{c_0}(\theta)$,
 the intersection of $R$ with the boundary of $K_{c_0}$ is reduced to $\{x_0\}$ and the intersection of $R$ with the interior of $K_{c_0}$ is contained in the immediate basin of $x_0$. It follows
 $\{ z_0,\ldots,z_{n-1}\}\bigcap S=\emptyset$.
 Then for $j$ sufficiently large, $\{ z_0,\ldots,z_{n-1}\}\subset \C\setminus S_{c_j}$. As $y_j $ has itineraries $\overline{(d-1)\cdots(d-1)(d-2)}$,
we have $\{z_0,\ldots z_{n-2}\}\subset U_{d-1}(c_0)\bigcup \overline{W}_{d-1}(c_0)$, $z_{n-1}\in U_{d-2}(c_0)
\bigcup \overline{W}_{d-2}(c_0) $.

{\noindent\bf Claim 1.} $z_{n-1}\notin \overline{W}_{d-2}(c_0)$.

{\noindent\rm Proof}. In $J(f_{c_0})$, $x_0$\ is the unique periodic point with more than one external rays landing on it (refer to \cite[proposition 3.3]{Poi}). So there is exactly one external ray landing on $z_{n-1}$ with period $n$. Its angle is denoted by $\dfrac{a}{d^n-1},\ a$\ is a integer.
If $z_{n-1}\in \overline{W}_{d-2}$, the angle of external ray belonging to $z_{n-1}$ satisfy
\[ \frac{\eta+d-2}{d}<\frac{a}{d^n-1}<\frac{\theta+d-2}{d}\ \ (\ \theta=1-\frac{1}{d^n-1},\ \eta=d\theta-d+1\ ).\]
by simple computation, we have
\[ \frac{k(d^n-1)}{d-1}-d^{n-1}-1+\frac{1}{d}<a<\frac{k(d^n-1)}{d-1}-d^{n-1},\]
a contradiction to $a$\ is an integer. This ends the proof of claim 1.

{\noindent\bf Claim 2.} $z_{n-1}\notin U_{d-2}(c_0)$.

{\noindent\rm Proof}. If $z_{n-1}\in U_{d-2}(c_0)$, we label the sectors at $x_0$ by $S_i (0\leq i\leq n-1)$ clockwise with $S_0=V_1(c_0)$. The dynamics between these sectors satisfy
\[V_1(c_0)=S_0\xrightarrow{f_{c_0}}S_1
   \xrightarrow {f_{c_0}}
   \cdots
   \xrightarrow {f_{c_0}} S_{n-2}\xrightarrow{f_{c_0}} S_{n-1}=\C\setminus \overline{U}_{d-1}(c_0) \]
As $\{z_0,\ldots z_{n-2}\}\subset U_{d-1}(c_0)\bigcup \overline{W}_{d-1}(c_0)$, we have $z_0=f_{c_0}(z_{n-1})$ belongs
to the union of $\overline{W_{d-1}}(c_0)$ and $\bigcup_{i=1}^{n-2}S_i$.
If $z_0\in S_{i_0}\ (1\leq i_0\leq n-2)$, then $f_{c_0}^{(n-2-i_0)}(z_0)=z_{n-2-i_0}\in f_{c_0}^{(n-2-i_0)}(S_{i_0})=S_{n-2}$. It follows $f_{c_0}(z_{n-2-i_0})=z_{n-1-i_0}$ must
belong to $\overline{W}_{d-1}(c_0)$. So $z_{n-i_0}\in S_0$ and $f_{c_0}^{(i_0-1)}(z_{n-i_0})=z_{n-1}\in f_{c_0}^{(i_0-1)}(S_0)=S_{i_0-1}$, contradiction to $z_{n-1}\in U_{d-2}$. If $z_0\in \overline{W}_{d-1}(c_0)$, then $z_1\in S_0$.
We have $f_{c_0}^{(n-2)}(z_1)=z_{n-1}\in f_{c_0}^{(n-2)}(S_0)=S_{n-2}$, also a contradiction to $z_{n-1}\in U_{d-2}(c_0)$. This ends the proof of claim 2.

The two claim imply the assumption that $z$ is repelling $n$ periodic point is false and then $z$\ must be a parabolic fixed point of $f_{c_0}$,
that is $z=x_0$.
\end{proof}

\subsection{Proof of Theorem \ref{main}}

Fix $n>1$ (the case $n=1$ has been treated directly at the beginning). We proceed to show that $X_n$ is connected.

Set $X:=\C\setminus \bigl(M_d\cup R_{M_d}(0)\bigr)$ and $F_n:=\C\setminus$ all the landing points of periodic $n$ parameter rays.
Take any pair of points $(a,w), (a',w')$\ in $X_n$. By analytic continuation, we may assume $a,a'\in X$. Again by analytic continuation on simply connected open set $X$, we may assume $a=a'$. Thus it is enough to show that there exists a loop in $F_n$\ based on $a$\ such that the analytic continuation along the loop connects $w$\ and $w'$. We will give a algorithm to find such a loop.

Let z be any $n$\ periodic point of $f_a$.
\begin{description}

  \item[step 1] In the orbit of $z$, there is a point with maximal itineraries among the shift of $\iota_a(z)$\ in
  the lexicograph order, denoted by $\overline{\epsilon_1\ldots\epsilon_n}$. Set $\theta=.\overline{\epsilon_1\ldots\epsilon_n}$ ($\theta$ is maximal in its orbit). If $\theta$ satisfies the properties in  lemma \ref{criterion}, do step $2$\ below. Otherwise, $\gamma_{M_d}(\theta)$ is a primitive parabolic parameter. According to lemma \ref{key} and proposition \ref{permuting1}, when $a$\ makes a turn around $\gamma_{M_d}(\theta)$, the periodic point of $f_a$
  with itineraries $\overline{\epsilon_1\ldots\epsilon_n}$\ and $\overline{\epsilon_1\ldots(\epsilon_n+1)}$\ get changed. Then $z$\ is connected to a new orbit containing $\iota_a^{-1}(\overline{\epsilon_1\ldots(\epsilon_n+1)})$.
  For this new orbit, repeat doing step $1$.

  \item[step 2] $\theta=.\overline{\epsilon_1\ldots\epsilon_n}$ is maximal in its orbit and satisfies the properties in lemma \ref{criterion}. If $\theta=.\overline{(d-1)\cdots(d-1)(d-2)}$, step $2$ ends. Otherwise, let $\overline{w^{s-1}w_\star}$ be the cyclic expression of $\nu(\theta)$ where $w=\nu_1\ldots \nu_t,\ \nu_t\in[1,d-1]$. As in lemma \ref{apply}, we obtain a sequence of angles $\{\beta_{\nu_t-2},\ldots,\beta_0,\ \beta_{-1}\}$ and know that $\gamma_{M_d}(\beta_{\nu_t-i})$ is a primitive parabolic parameter with $\nu(\theta)=\overline{\epsilon_1\ldots\epsilon_{n-1}\star}$ for any $i\in [2,\nu_t+1]$. Then by proposition \ref{permuting1} again, as $a$ makes a turn around $\gamma_{M_d}(\beta_{\nu_t-i})\ (2\leq i\leq\nu_t+1)$, the periodic points of $f_a$ with itineraries $\overline{\epsilon_1\ldots\epsilon_{n-1}(\nu_t-i)}$ and $\overline{\epsilon_1\ldots\epsilon_{n-1}(\nu_t-i+1)}$ get changed. Then let $a$ makes turns around from $\gamma_{M_d}(\beta_{\nu_t-2})$ to $\gamma_{M_d}(\beta_{-1})$ one by one, we have $\iota_a^{-1}(\overline{\epsilon_1\ldots \epsilon_{n-1}\epsilon_n})$ are connected with $\iota_a^{-1}(\overline{\epsilon_1\ldots \epsilon_{n-1}(d-1)})$ by analytic continuation through the points $\iota_a^{-1}(\overline{\epsilon_1\ldots \epsilon_{n-1}(\epsilon_{n}-1)}),\ldots,\iota_a^{-1}(\overline{\epsilon_1\ldots \epsilon_{n-1}0})$. For the new
      periodic point $\iota_a^{-1}(\overline{\epsilon_1\ldots \epsilon_{n-1}(d-1)})$, do step $1$.

\end{description}

Every time a $n$ periodic point of $f_a$ passes though step $1$ or step $2$, the sum of all digits in the itineraries of the output periodic point is greater than that of the input one. For fixed $n$, this sum  is bounded $\bigl(\text{the bound is } (d-1)n-1 \bigr)$, then each $n$ periodic point $z$ can be connected to the orbit containing $\iota_a^{-1}(\overline{(d-1)\cdots(d-1)(d-2)})$.

In our case, applying the procedure above to $w$\ and $w'$, we have $w$\ and $w'$\ are connected to two points of
the periodic orbit containing $\iota_a^{-1}(\overline{(d-1)\cdots (d-1)(d-2)})$. Proposition \ref{permuting2}\ tells us, by
analytic continuation, any two point in this orbit can be connected as long as $a$\ makes the appropriate number of turns around $\gamma_{M_d}(1-\frac{1}{d^n-1})$. Thus $w$\ and $w'$\ are connected.
\qed

\end{document}